\theoremstyle{definition}
\newtheorem*{rep@theorem}{\rep@title}
\newcommand{\newreptheorem}[2]{%
\newenvironment{rep#1}[1]{%
 \def\rep@title{#2 \ref{##1}}%
 \begin{rep@theorem}}%
 {\end{rep@theorem}}}
\numberwithin{equation}{section}
\theoremstyle{definition}
\newtheorem{dfn}{Definition}[section]
\newtheorem{thm}[dfn]{Theorem}
\newtheorem{lm}[dfn]{Lemma}
\newtheorem{crl}[dfn]{Corollary}
\theoremstyle{remark}
\newtheorem{rmk}[dfn]{Remark}
\newcommand{\e}{\varepsilon}
\newcommand{\pt}{\partial}
\newcommand{\mc}[1]{\mathcal{#1}}
\newcommand{\mf}[1]{\mathfrak{#1}}
\newcommand{\ol}[1]{\overline{#1}}
\newcommand{\ra}{\rightarrow}
\newcommand{\R}{\mathbb{R}}
\renewcommand{\H}{\mathbb{H}}
\renewcommand{\S}{\mathbb{S}}
\newcommand{\C}{\mathbb{C}}
\newcommand{\E}{\mathbb{E}}
\newcommand{\tr}{{\rm tr}}
\newcommand{\conv}{{\rm conv}}
\newcommand{\dev}{{\rm dev}}
\newcommand{\inter}{{\rm int}}
\newcommand{\im}{{\rm im}}
\newcommand{\ost}{{\rm ost}}
\newcommand{\ad}{{\rm ad}}
\newcommand{\Ad}{{\rm Ad}}
\newcommand{\dS}{d\mathbb{S}}
\newcommand{\MCG}{\textrm{MCG}}
\renewcommand{\hat}{\widehat}
\renewcommand{\tilde}{\widetilde}
\renewcommand{\Re}{{\rm Re}}
\renewcommand{\Im}{{\rm Im}}
\title{Dual metrics on the boundary of strictly polyhedral hyperbolic 3-manifolds}
\author{Roman Prosanov \thanks{This research was funded in part by the Swiss National Science Foundation grant $200021_-179133$ and in part by the Austrian Science Fund (FWF) \url{https://doi.org/10.55776/ESP12}. For open access purposes, the author has applied a CC BY public copyright license to any author-accepted manuscript version arising from this submission.}}
\date{}
\begin{document}
\maketitle
\abstract{Let $M$ be a compact oriented 3-manifold with non-empty boundary consisting of surfaces of genii $>1$ such that the interior of $M$ is hyperbolizable. We show that for each spherical cone-metric $d$ on $\partial M$ such that all cone-angles are greater than $2\pi$ and the lengths of all closed geodesics that are contractible in $M$ are greater than $2\pi$ there exists a unique strictly polyhedral hyperbolic metric on $M$ such that $d$ is the induced dual metric on $\partial M$.}

\section{Introduction}

\subsection{Metric geometry of convex polyhedra}

The class of problems dealing with determining a surface in space by some associated geometric data has a rich history. This paper constitutes another its chapter, studying a discrete analogue of the third fundamental form, \emph{the dual metric}, on the boundary of polyhedral hyperbolic 3-manifolds. We start from a basic definition.

\begin{dfn}
\label{conedef}
Let $S$ be a closed surface. We say that $d$ is a \emph{Euclidean cone-metric} (resp. \emph{spherical} or \emph{hyperbolic}) on $S$ if it is locally isometric to the Euclidean plane (resp.  the standard sphere or the hyperbolic plane) except at finitely many points where it is locally isometric to a Euclidean cone (resp. spherical or hyperbolic) of total angle $\neq 2\pi$. We say that $d$ is \emph{convex} if all cone angles are $<2\pi$. We say that $d$ is \emph{concave} if all cone angles are $>2\pi$. 
\end{dfn}

A theorem of Alexandrov~\cite[Chapter VI]{Ale} states that a convex Euclidean cone-metric on the 2-sphere is realized as the induced intrinsic metric on the boundary of a unique up to isometry compact convex polyhedron in the Euclidean 3-space $\E^3$. The induced intrinsic metric here means the induced path metric and the realization might degenerate to the double cover of a convex polygon. This is a polyhedral version of a fundamental result in differential geometry: every Riemannian metric of positive Gaussian curvature on the 2-sphere is realized as the induced metric on the boundary of a unique up to isometry smooth convex body in $\E^3$. This completely characterizes the induced metrics on the boundaries of smooth strictly convex bodies, i.e., those with a positive-definite shape operator at every boundary point. The existence part is known as the famous \emph{Weyl Problem} resolved by Nirenberg~\cite{Nir} from one side, and by Alexandrov and Pogorelov~\cite{Ale, Pog} from another side. The uniqueness part is known as the global rigidity of closed smooth strictly convex surfaces in $\E^3$. We refer to Cohn-Vossen~\cite{CoV} and to Herglotz~\cite{Her} as to one of the main contributors. 

Alexandrov noticed that his theorem can be transferred without changes to polyhedra in the standard 3-sphere $\S^3$ and in the hyperbolic 3-space $\H^3$. Convex bodies in these spaces have another interesting associated metric structure, which we call \emph{the dual metric}, and which is somewhat degenerate in the Euclidean 3-space $\E^3$. In the standard 3-sphere $\S^3$ there is a natural duality between points and oriented 2-planes. For a convex body $C \subset \S^3$ this allows to define \emph{the Gauss map} as a multivalued map on $\partial C$ sending $p \in \partial C$ to the set of points dual to supporting planes to $C$ at $p$ oriented outwards $C$. The image of $\partial C$ under this map is a surface bounding another closed convex set $C^* \subset \S^3$, which we call \emph{the dual} of $C$. We can also define $C^*$ as the set of points dual to all 2-planes not intersecting $\inter(C)$ oriented outwards $C$.

This duality is polar on closed convex sets, i.e., $(C^*)^*=C$. It follows from the Alexandrov Theorem for $\S^3$ that a convex polyhedron $C \subset \S^3$ is also uniquely determined by the induced intrinsic metric on the boundary of $C^*$. This does not work the same in Euclidean 3-space: the natural metric structure on \emph{the co-Euclidean space}, which is the space of oriented 2-planes in $\E^3$, is degenerate, and the induced intrinsic metric on the dual convex set is always isometric to the standard 2-sphere $\S^2$. Parallel translations allow to consider $\S^2$ as the target for the Gauss map on convex surfaces in $\E^3$. We refer to~\cite{FS} for a more detailed exposition. 

The situation is more interesting in the hyperbolic 3-space $\H^3$. The set of oriented 2-planes in $\H^3$ is naturally identified with \emph{the de Sitter 3-space} $\dS^3$, which is a model Lorentzian 3-space of constant curvature 1. See Section~\ref{minkow} for more details on this duality. A non-trivial work was required to understand metric properties of dual convex bodies in this setting. For the case of compact convex polyhedra in $\H^3$ this resulted in the following theorem of Rivin--Hodgson~\cite{HR}:

\begin{dfn}
A metric on a surface $S$ is called \emph{large} if the lengths of all closed contractible geodesics are greater than $2\pi$.
\end{dfn}

\begin{thm}
\label{HR}
For every concave large spherical cone-metric $d$ on the 2-sphere $S$ there is a unique up to isometry compact convex polyhedron $C \subset \H^3$ such that $(S, d)$ is isometric to the boundary of its dual $C^* \subset \dS^3$. 
\end{thm}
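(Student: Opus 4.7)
The plan is the classical \emph{continuity method} in the style of Alexandrov, applied to a dualization map rather than an induced-metric map. Let $\mathcal{P}_n$ denote the moduli of compact convex polyhedra in $\H^3$ with $n$ marked vertices (modulo $\mathrm{Isom}^+\H^3$), stratified by the combinatorial type of the $1$-skeleton, and let $\mathcal{M}_n$ denote the moduli of concave large spherical cone-metrics on $S^2$ with $n$ marked cone points, stratified by the combinatorial type of a geodesic cell decomposition between cone points. Dualization defines a map $\Phi\colon\mathcal{P}_n\to\mathcal{M}_n$ sending $C$ to the intrinsic metric on $\partial C^*$. First I would verify $\Phi$ actually takes values in $\mathcal{M}_n$: the cone angle of $d^*$ at the vertex dual to a $2$-face $F$ of $C$ works out to $2\pi+\area(F)>2\pi$ via a spherical link computation, so $d^*$ is concave; any simple closed geodesic on $\partial C^*$ is dual to an edge cycle of $C$ bounding a topological disk, and its length equals the sum of exterior dihedral angles along that cycle, which exceeds $2\pi$ by Gauss--Bonnet applied to the bounded disk.

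The second step is a dimension count. A top stratum of $\mathcal{P}_n$ has dimension $3n-6$ ($n$ vertices in $\H^3$ give $3n$ parameters, quotiented by the $6$-dimensional isometry group), and the corresponding top stratum of $\mathcal{M}_n$ has dimension $3n-6$ as well (lengths of the $3n-6$ edges of the triangulation, subject to spherical closing constraints around each cone point whose solvability is given by a Troyanov-type existence theorem). So $\Phi$ restricts to a smooth map between manifolds of equal dimension on matching open strata.

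The heart of the argument lies in two further steps: \textbf{infinitesimal rigidity} and \textbf{properness}. For infinitesimal rigidity, I would use a Schl\"afli-type variational formula for convex polyhedra in $\H^3$, which expresses the first variation of the volume (or of a Hilbert--Einstein functional $V(C)-\tfrac{1}{2}\sum_e \ell_e(\pi-\theta_e)$) in terms of edge lengths and dihedral angles, and combine it with a Minkowski-type identity in $\dS^3$ to deduce that a first-order deformation of $C$ preserving all exterior dihedral angles and all face-link angles must be trivial. For properness, I would show that any sequence escaping every compact subset of a stratum of $\mathcal{P}_n$ (a vertex going to $\partial\H^3$, a dihedral angle tending to $0$ or $\pi$, or a face collapsing) produces either a cone angle tending to $2\pi$ or $\infty$, or a contractible closed geodesic of length tending to $2\pi$, so is either rejected from $\mathcal{M}_n$ or falls into a lower stratum. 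The infinitesimal rigidity will be the main obstacle; the precise form of the Schl\"afli-type argument, and the duality dictionary between deformations of $C$ in $\H^3$ and of $C^*$ in $\dS^3$, is where the real work lies.

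Together, local injectivity and properness on each stratum give, via invariance of domain, a stratum-wise homeomorphism. Because the two stratifications match under $\Phi$ (collapsing an edge of $C$ corresponds to collapsing an edge of the dual cell decomposition of $\partial C^*$), these stratum-wise homeomorphisms assemble into a global homeomorphism $\mathcal{P}_n\to\mathcal{M}_n$, yielding both the existence and the uniqueness asserted in the theorem.
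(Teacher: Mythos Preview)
This theorem is not proved in the paper; it is quoted as a result of Rivin--Hodgson~\cite{HR}, and the paper's own contribution (Theorem~\ref{mt}) is a generalization to manifolds $M$ whose boundary components have genus $>1$, so the $3$-ball case is not even formally a specialization of what the paper proves. There is therefore no ``paper's own proof'' to compare against; what I can do is compare your sketch with the original argument in~\cite{HR} and with the way the paper handles the analogous issues for general $M$.

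Your overall architecture (continuity method, dimension count, local diffeomorphism plus properness plus connectivity) is indeed the Rivin--Hodgson strategy, and is also the template of the present paper. Two of your steps, however, contain genuine gaps.

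\textbf{Largeness.} Your claim that ``any simple closed geodesic on $\partial C^*$ is dual to an edge cycle of $C$'' is false: a closed geodesic in the intrinsic metric of $\partial C^*$ need not run along edges of the dual cell decomposition, so the Gauss--Bonnet computation you describe does not apply. The actual proof of largeness in~\cite{HR} is considerably more delicate; in the present paper the corresponding fact is obtained from the $\mathrm{CAT}(1)$-type result of~\cite{CD}, which shows that points at distance $<\pi$ in $\partial C^*$ are joined by a unique minimizing geodesic and that a closed geodesic of length $2\pi$ would have to bound a cusp.

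\textbf{Infinitesimal rigidity.} The Schl\"afli relation $dV=-\tfrac12\sum_e \ell_e\,d\theta_e$ only tells you that a deformation with all $d\theta_e=0$ has $dV=0$; it does not by itself force the deformation to be trivial, and the ``Minkowski-type identity'' you invoke is not specified. In~\cite{HR} rigidity is obtained by transporting the problem, via an infinitesimal Pogorelov map, to the Euclidean setting and then invoking Dehn's infinitesimal version of Cauchy's theorem for convex $3$-polytopes --- an argument that crucially uses that the boundary is a sphere. (This is exactly why the present paper needs an entirely different mechanism, the cone-manifold rigidity of Weiss and Montcouquiol--Weiss, in Section~\ref{irsec}.) Your sketch does not supply a working substitute.

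Finally, your last paragraph hides the real difficulty of the connectivity step. The stratifications of $\mathcal P_n$ and $\mathcal M_n$ do \emph{not} obviously match up globally, and neither space is known to be connected for fixed $n$; both~\cite{HR} and the present paper (Lemmas~\ref{connect}, \ref{connect1}, \ref{connect2}) get around this by enlarging the marked set $V$ and constructing paths through the larger moduli space, together with a separate topological argument to conclude bijectivity. ``Stratum-wise homeomorphisms assemble into a global homeomorphism'' is precisely the statement that requires justification.
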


One can see that the conditions on $d$ in Theorem~\ref{HR} are also necessary (the only non-trivial condition is largeness, and its necessity is proven in~\cite{HR}). Theorem~\ref{HR} can be seen as a generalization of the Andreev Theorem~\cite{And} on compact convex polyhedra in $\H^3$ with non-obtuse dihedral angles, another generalization of which was famously used by Thurston in his Hyperbolization Theorem for Haken 3-manifolds, see~\cite{Thu, Mor}. For a corrected proof of the Andreev Theorem and more references we refer to~\cite{RHD}. Theorem~\ref{HR} was also used by Rivin in~\cite{Riv2} to describe the dihedral angles of ideal polyhedra in $\H^3$, which in turn implied a resolution of the Steiner Problem. The latter asked how to distinguish the combinatorial types of convex Euclidean 3-polyhedra that admit a realization with all vertices on the standard 2-sphere. A more general Steiner Problem, with the 2-sphere replaced by the other types of quadrics in $\mathbb{RP}^3$, was resolved in~\cite{DMS}. In~\cite{Sch} Schlenker proved a version of Theorem~\ref{HR} for smooth strictly convex bodies in $\H^3$. We note that if $C \subset \H^3$ is a smooth strictly convex body, then the pull-back of the dual metric by the Gauss map is exactly \emph{the third fundamental form} of $\partial C$. 

\subsection{Hyperbolic 3-manifolds with convex boundary}

In the seventies Thurston revolutionized the field of 3-dimensional topology by showing the importance of homogeneous geometric structures on 3-manifolds, posing his famous Geometrization Conjecture (whose proof was completed by Perelman) and demonstrating a special role of hyperbolic geometry in dimension 3. In this paper we will be interested in studying compact hyperbolic 3-manifolds with convex polyhedral boundary. 

Let $M$ be an oriented smooth compact 3-manifold with non-empty boundary and $N$ be a 3-manifold homeomorphic to the interior of $M$.

\begin{dfn}
A subset $C$ of a complete hyperbolic manifold is called \emph{totally convex} if it is closed and contains every geodesic segment between any two points of $C$.
\end{dfn}

\begin{dfn}
A hyperbolic metric $\overline g$ on $N$ is called \emph{convex cocompact} if it is complete and $(N, \overline g)$ contains a compact totally convex subset. 
\end{dfn}

We assume that $N$ admits a convex cocompact hyperbolic metric. Then $M$ is irreducible and atoroidal. We also assume that $M$ is distinct from the 3-ball and the solid torus. Then all its boundary components have genii at least two. Thurston's Hyperbolization Theorem implies that the converse holds: the interior of a compact 3-manifold satifying these conditions admits a convex cocompact hyperbolic metric, see~\cite{Mar2, Mor}. We will call such $M$ \emph{admissible}. Convex cocompact hyperbolic metrics are generic as they constitute the interior of the moduli space of complete hyperbolic metrics on $N$. 

Let $M$ be endowed with a hyperbolic metric $g$ such that the boundary is locally convex. 
A developing map ${\rm dev}: \tilde M \rightarrow \H^3$ embeds isometrically the universal cover $(\tilde M, g)$ to $\H^3$ as a (non-compact) convex set equivariantly with respect to the holonomy map $\rho: \pi_1(M) \rightarrow G$, where $G$ is the group of orientation-preserving isometries of $\H^3$. We look at the dual convex set $({\rm dev}(\tilde M))^* \subset \dS^3$ defined in Section~\ref{minkow}. The group $G$ can be also identified with the identity component of the isometry group of $\dS^3$, and $({\rm dev}(\tilde M))^*$ is, of course, also invariant with respect to $\rho$. However, the action may not be free on $({\rm dev}(\tilde M))^*$. But under mild restrictions (positive distance from $\partial M$ to the convex core of $(M, g)$, which will be defined later) it is free and properly discontinuous on $\partial({\rm dev}(\tilde M))^*$. By taking the quotient, we obtain \emph{the dual metric} on $\partial M$. Again, in the case of smooth strictly convex boundary, it is exactly the third fundamental form of $\partial M$. 

Gabai-Meyerhoff-Thurson~\cite{GMT} showed that any homeomorphism between closed hyperbolic 3-manifolds is isotopic to an isometry, refining in dimension 3 the well-known Mostow rigidity~\cite{Mos}. Due to the rigidity and the dual rigidity of convex bodies in $\H^3$, it is interesting to ask if the intrinsic metric or the dual metric of the boundary determine $g$ up to isotopy, and if all metrics satisfying natural necessary conditions are realized for some choice of $g$. For the case of smooth strictly convex boundary this was done by Schlenker in~\cite{Sch4}:

\begin{thm}
\label{sch1}
Let $M$ be admissible. For every smooth Riemannian metric $d$ on $\partial M$ of Gaussian curvature $>-1$ there exists a unique up to isotopy hyperbolic metric $g$ on $M$ such that $\partial M$ is strictly convex in $g$ and the induced intrinsic metric on $\partial M$ is $d$.
\end{thm}

\begin{dfn}
We say that a metric $d$ on $\partial M$ is \emph{large for $M$} if the lengths of all closed geodesics in $(\partial M, d)$ that are contractible in $M$ are greater than $2\pi$.
\end{dfn}

\begin{thm}
\label{sch2}
Let $M$ be admissible. For every smooth large for $M$ Riemannian metric $d$ on $\partial M$ of Gaussian curvature $<1$ there exists a unique up to isotopy hyperbolic metric $g$ on $M$ such that $\partial M$ is strictly convex in $g$ and the induced dual metric on $\partial M$ is $d$.
\end{thm}

It is natural to expect that polyhedral counterparts of Theorem~\ref{sch1} and~\ref{sch2} should hold. Curiously enough, the approach to infinitesimal rigidity (which is the key part of the argument) from~\cite{Sch4} is too much specific to the smooth situation and does not seem to admit a discrete analogue. In the same time, standard methods of proving the infinitesimal rigidity of polyhedra in space-forms use fundamentally the fact that the boundary is the 2-sphere, and become not applicable in more non-trivial topological situations. Hence, we needed to work out a different technique to bridge this gap. For a polyhedral version of Theorem~\ref{sch1} an additional complication arises from the necessity to consider not just hyperbolic 3-manifolds with polyhedral boundary, but a more general class of so-called \emph{bent} hyperbolic 3-manifolds. A good example is provided by \emph{the convex core} of a convex cocompact hyperbolic 3-manifold, which is the inclusion-minimal totally convex subset, see more details in Section~\ref{cocomp}. Its boundary is convex and intrinsically locally isometric to the hyperbolic plane, but it is bent along a geodesic lamination. This lamination may consist of uncountably many disjoint open geodesics winding around each other in a sophisticated manner, see~\cite{Thu, BO} for explanations. It is not known currently, how to establish the infinitesimal rigidity for such manifolds. However, in our very recent work~\cite{Pro3} we prove the existence part of the polyhedral version of Theorem~\ref{sch1} and obtain the uniqueness under mild restrictions on the boundary.

The mentioned effects, however, do not considerably impact the dual problem, as the dual metric degenerates in the mentioned cases (though then it can be replaced by another object, and it is a very interesting task to deal with the rigidity in this setting, see the end of this section). This paper is devoted to a resolution of the dual problem, i.e., to a polyhedral counterpart of Theorem~\ref{sch2}. 

\begin{dfn}
We say that $g$ is a \emph{strictly polyhedral} hyperbolic metric on $M$ if the interior is locally hyperbolic, the boundary is locally isometric to convex polyhedral sets in $\H^3$, and the distance from $\partial M$ to the convex core is positive.
\end{dfn}

Again, for more explanations on convex cores we refer to Section~\ref{cocomp}. Our result is 

\begin{thm}
\label{mt}
Let $M$ be admissible. For each concave large for $M$ spherical cone-metric $d$ on $\partial M$ there exists a unique up to isotopy strictly polyhedral hyperbolic metric $g$ on $M$ such that the induced dual metric on $\partial M$ is $d$.
\end{thm}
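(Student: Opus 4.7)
My plan is to apply the continuity (deformation) method. Introduce the moduli space $\mc{P}$ of isotopy classes of strictly polyhedral hyperbolic metrics $g$ on $M$ and the moduli space $\mc{D}$ of isotopy classes of concave large-for-$M$ spherical cone-metrics $d^*$ on $\partial M$, and consider the \emph{dual metric map} $\Phi \colon \mc{P} \to \mc{D}$ sending $g$ to $d^*_g$ as constructed above. The aim is to prove that $\Phi$ is a homeomorphism, which gives the existence and uniqueness of Theorem~\ref{mt} simultaneously. Both spaces carry a natural stratification by the combinatorial type $\sigma$ of vertices and edges of the polyhedral boundary (equivalently of the cone-point configuration of $d^*$); on a fixed stratum, $\mc{P}_\sigma$ is locally coordinatized by the convex cocompact holonomy on $\inter(M)$ together with vertex positions and dihedral angles, while $\mc{D}_\sigma$ is coordinatized by triangle edge-lengths in a fixed triangulation of $d^*$. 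A parameter count combining Gauss--Bonnet on each boundary surface with the dimension formula for the convex cocompact deformation space of $N$ should yield $\dim \mc{P}_\sigma = \dim \mc{D}_\sigma$, and continuity of $\Phi$ follows from continuity of the dualization $C \mapsto C^*$ on closed convex subsets of $\H^3$ combined with equivariance under the holonomy.

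\textbf{Infinitesimal rigidity --- the principal obstacle.} The hardest step, and the one where a new technique is required, is to show that $d\Phi$ is injective at every $g \in \mc{P}$. The classical Cauchy--Alexandrov sign-counting proofs of polyhedral rigidity rely on Euler's formula for the $2$-sphere in an essential way and break down when the components of $\partial M$ have genus $\geq 2$; Schlenker's smooth proof of Theorem~\ref{sch2} is peculiar to the smooth category and admits no direct discrete analogue. I expect the correct route to be a discrete variational one: set up a discrete Hilbert--Einstein or Schl\"afli-type functional $F$ on an auxiliary space of labeled polyhedral data, so arranged that its critical points with respect to certain distinguished variables correspond exactly to genuine hyperbolic polyhedral structures realizing a prescribed dual metric, and then prove strict concavity of $F$ on the submanifold of deformations preserving $d^*$. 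The concavity ($>2\pi$ cone angles) of $d^*$ should produce the required local definiteness at each vertex of $\partial M$, while the largeness ($>2\pi$ contractible closed geodesic lengths) hypothesis should control potential cancellation along edges that wind non-trivially in $M$; together they should force the Hessian to have a definite sign and yield the infinitesimal rigidity of $\Phi$.

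\textbf{Properness and conclusion.} For properness, consider $d^*_n \to d^*_\infty$ in $\mc{D}$ with $g_n \in \Phi^{-1}(d^*_n)$; I must extract a subsequence with $g_n \to g_\infty \in \mc{P}$ and $\Phi(g_\infty) = d^*_\infty$. Three failure modes must be ruled out: the convex cocompact holonomy $\rho_n$ diverges in the character variety, vertices of $\partial M$ collide or escape, and the distance from $\partial M$ to the convex core collapses. Largeness of $d^*_\infty$ excludes pinching of essential curves (a pinched contractible curve would generate short contractible geodesics in the limit dual metric), a Gauss--Bonnet spherical-area bound on $(\partial M, d^*_n)$ excludes vertex escape or collision, and a barrier argument via the Gauss map excludes distance collapse. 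Once $\Phi$ is a proper local homeomorphism between manifolds it is a covering map; to reduce it to a homeomorphism I exhibit a preimage in each component of $\mc{D}_\sigma$ by bending a totally geodesic boundary structure along a small polyhedral pattern, and then invoke connectedness of $\mc{D}_\sigma$ (which should parallel the sphere case of the Rivin--Hodgson Theorem~\ref{HR}) to conclude that the covering has one sheet. This yields Theorem~\ref{mt}.
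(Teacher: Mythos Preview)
Your overall architecture matches the paper's: continuity method, set up $\mathcal I_V\colon \mathcal P(M,V)\to \mathcal D^*_M(\partial M,V)$, prove it is a proper local homeomorphism, and conclude via connectivity. The serious divergence is in the infinitesimal rigidity step, and there your proposal is a genuine gap rather than an alternative route.

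You suggest a discrete Hilbert--Einstein/Schl\"afli functional whose Hessian is definite on deformations preserving $d^*$, with the concavity and largeness hypotheses supplying the sign. The paper explicitly flags this as an open problem: ``It would be very interesting to obtain a variational proof of Theorem~\ref{mt} \ldots\ but at the moment there are difficulties with attempts to implement such intentions.'' No such functional with the required definiteness is known here; the largeness hypothesis is a global condition on closed geodesics and there is no mechanism that converts it into a local sign on a Hessian indexed by edges. What the paper actually does is entirely different: it doubles $M$ along $\partial M$ to obtain a closed hyperbolic cone-$3$-manifold $M^D(g^D)$ with all cone angles $<2\pi$, embeds $\mathcal P(M,V)$ into the deformation space of such cone-manifolds, and then invokes the $L^2$-cohomology vanishing machinery of Weiss and Montcouquiol--Weiss (Theorem~\ref{vanishing}, Lemmas~\ref{L_2}, \ref{degree}, \ref{linind}). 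The delicate point is that fixing $d^*$ is not the same as fixing dihedral angles when vertices have valence $>3$; the paper handles this by choosing, for each high-valence vertex, an admissible pants decomposition of its link and using trace-coordinate independence to force the restriction of the cohomology class to vanish on the translational summand.

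A second, smaller gap is the endgame. You plan to ``invoke connectedness of $\mathcal D_\sigma$'', but the paper cannot establish connectedness of $\mathcal D^*_M(\partial M,V)$ or $\mathcal P(M,V)$ directly. Instead it proves only the weak statements of Lemmas~\ref{connect1} and~\ref{connect2}: any two metrics (or any loop) can be joined (or filled) after enlarging the vertex set $V$ to some $W\supseteq V$, and then a careful argument using $\overline{\mathcal P}(M,W)$, Corollary~\ref{locconnect}, and a Hausdorff-limit trick finishes the uniqueness. Simply asserting connectedness in analogy with the sphere case of~\cite{HR} is not enough.
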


This is a generalization of Theorem~\ref{HR} to arbitrary compact hyperbolic 3-manifolds with boundary. We note that the condition of strict convexity in Theorems~\ref{sch1} and~\ref{sch2} implies that the boundary is at positive distance from the convex core. Thus, Theorem~\ref{mt} includes a polyhedral version of Theorem~\ref{sch2}. When the boundary of $M$ touches the convex core, the dual metric degenerates to a pseudo-metric, hence the condition of strict polyhedrality in Theorem~\ref{mt} is necessary to have a well-definite dual metric on $\partial M$. In Section~\ref{polyhsec} we explain that the conditions on $d$ are also necessary (again, this is non-trivial only for the condition of largeness for $M$).

The only previously known case of Theorem~\ref{mt} was established by Schlenker in~\cite{Sch5} for so-called \emph{Fuchsian hyperbolic 3-manifolds of the first kind} with convex polyhedral boundary. They are homeomorphic to $S \times [-1, 1]$, where $S$ is a closed surface of genus $>1$, and contain an embedded totally geodesic surface isotopic to $S \times \{0\}$. See also a paper of Fillastre~\cite{Fil}. We note that our Theorem~\ref{mt} answers positively to Question~2 in~\cite{Fil}. In the Fuchsian case of the first kind Fillastre also established in~\cite{Fil2} a polyhedral counterpart to Theorem~\ref{sch1}. Similar results for the Minkowski and anti-de Sitter 3-spaces were obtained in~\cite{Fil}.

A particularly interesting case of Theorem~\ref{mt} is given by so-called \emph{quasi-Fuchsian hyperbolic 3-manifolds} with boundary. Those are also homeomorphic to $S \times [-1, 1]$, but the condition on the embedded geodesic surface is dropped. There is a pair of domains in $\dS^3$, on which the holonomy map of a quasi-Fuchsian manifold acts freely and properly discontinuously.
The quotient of every of them is known in Lorentzian geometry as a \emph{globally hyperbolic maximal Cauchy compact} (GHMC for short) (2+1)-spacetime of constant curvature 1. See, e.g., the fundamental paper of Mess~\cite{Mes} for more details and a paper of Scannell~\cite{Sca}. This case of Theorem~\ref{mt} can be interpreted as a convex polyhedral realization and rigidity result in such spacetimes.

Our proof of Theorem~\ref{mt} follows the standard continuity approach popularized by Alexandrov. We consider the space $\mathcal P(M, V)$ of isotopy classes of strictly polyhedral metrics on $M$ with marked faces, and the space $\mathcal D_M(\partial M, V)$ of isotopy classes of concave large for $M$ spherical cone-metrics on $\partial M$ with marked vertices. We endow these spaces with appropriate topologies and consider the dual metric map $\mathcal I_V: \mathcal P(M, V) \rightarrow \mathcal D_M(\partial M, V)$. We show that $\mathcal I_V$ is a $C^1$-diffeomorphism. 

The proof is decomposed into three pieces: establishing differentiability properties of $\mathcal I_V$ and its infinitesimal rigidity, showing its properness, and studying connectivity properties of the spaces $\mathcal P(M, V)$ and $\mathcal D_M(\partial M, V)$. The last item was basically done before us: it is a generalization of the connectivity results from~\cite{HR}, as was mentioned by Schlenker in~\cite{Sch5}. Thus, we mostly focus on the infinitesimal rigidity and the properness. Note also that the connectivity results on the spaces above are obtained only in some weak sense, and a special topological argument is required to finish the proof.

We deduce the infinitesimal rigidity via the doubling construction from the works on the infinitesimal rigidity of hyperbolic cone-3-manifolds due to Weiss and Montcouquiol-Weiss~\cite{Wei, Wei2, MW}. Note that in the presence of vertices of valence $>3$ the infinitesimal rigidity in our problem is not the same as the infinitesimal rigidity with respect to the dihedral angles, which follows from the aforementioned works rather directly, and we need to extract the former with some care. Hyperbolic cone-3-manifolds have interesting applications to various geometric problems, see, e.g.,~\cite{BLP, Bro, HK2, Pro2, Pro}. The proof of properness is partially inspired by Schlenker's works~\cite{Sch2, Sch3}, which deal with Alexandrov-type problems for polyhedra in the union of $\H^3$ and $\dS^3$. 

An interesting issue with the continuity method is that it gives no construction of a hyperbolic metric $g$ inducing the dual metric $d$. This might be resolved by a variational approach. A variational proof of Alexandrov's Theorem was given by Volkov, a student of Alexandrov, in his thesis, published only recently as~\cite{Vol}. A refinement of Volkov's approach was given in~\cite{BI} by Bobenko--Izmestiev. It was applied to similar problems for polyhedral hyperbolic 3-manifolds, e.g., in~\cite{FI, Pro}. A particularly relevant for us reference is~\cite{FI2} where the problem of induced dual metrics on the so-called \emph{hyperbolic 3-cusps} with convex polyhedral boundary was resolved by variational techniques. It would be very interesting to obtain a variational proof of Theorem~\ref{mt} or a proof of a polyhedral analogue of Theorem~\ref{sch1}, but at the moment there are difficulties with attempts to implement such intentions. 

Up to now we discussed in parallel questions for polyhedral and for smooth boundaries. It is very natural to ask if there is a common generalization unifying these two settings. This was famously initiated by Alexandrov in his work on general metrics on surfaces with curvature bounded from below~\cite{Ale}. He particularly proved a realization theorem, obtaining a complete intrinsic classification of the induced path metrics on the boundaries of convex bodies in $\E^3$, $\H^3$ or $\S^3$. The rigidity question for general convex bodies was settled by Pogorelov~\cite{Pog} and is known as notoriously difficult. In recent years there was a stream of work showing general realization results on hyperbolic 3-manifolds, e.g.,~\cite{Slu, FIV}. The only known general rigidity result for manifolds topologically more complicated than the 3-ball (for Fuchsian manifolds of the first kind) was done by the author in~\cite{Pro2}. The corresponding results for the dual metrics are still not known. An important particular case, again, is given by the convex cores of convex cocompact hyperbolic 3-manifolds. Their dual metric is somewhat degenerate, but can be replaced with another object called \emph{the pleating lamination} measuring how their boundary is bent. Thurston conjectured that convex cores are uniquely determined by their pleating lamination as well as by the induced boundary metric. The realization part of the former question was shown in~\cite{BO} and~\cite{Lec} by Bonahon--Otal and Lecuire, while for the latter it is, e.g., a particular corollary of the work~\cite{Lab} by Labourie, where another proof of the realization part of Theorem~\ref{sch1} was given. The rigidity part for pleating laminations was recently resolved by Dular--Schlenker in~\cite{DS}, while for the induced metric it remains an open problem, with some progress obtained in~\cite{Pro3}. This is reminiscent to the famous Ahlfors--Bers Theorem (see Theorem~\ref{ab}), which could be stated as that convex cocompact hyperbolic metrics on 3-manifolds are determined up to isotopy by the conformal structure at infinity.

\vskip+0.2cm

\textbf{Acknowledgments.} We thank Fran\c{c}ois Fillastre, Ivan Izmestiev, Jean-Marc Schlenker and the anonymous referees for helpful comments. We are also grateful to our mentor, Michael Eichmair, for his support.

\section{Preliminaries}

\subsection{Hyperbolic and de Sitter 3-spaces and the duality}
\label{minkow}

For some expositions on the duality between hyperbolic and de Sitter spaces we refer to \cite{HR, Sch2, FS}.

By $\langle .,. \rangle_M$ we denote the \emph{Minkowski scalar product} on $\R^4$, i.e.,
$$\langle x, y \rangle_M := -x_0y_0+x_1y_1+x_2y_2+x_3y_3.$$
The hyperbolic 3-space $\H^3$ can be identified then with the upper sheet of the two-sheeted hyperboloid
$$\H^3=\{x \in \R^4: \langle x,~x \rangle_M = -1, x_0>0\}$$
endowed with the induced metric. The de Sitter 3-space $\dS^3$ can be identified with the one-sheeted hyperboloid
$$\dS^3=\{x \in \R^4: \langle x, x \rangle_M =1\}$$
endowed with the induced metric. It is a Lorentzian 3-manifold of constant curvature 1 homeomorphic to $S^2 \times \R$. Let us say that a subset of $\H^3$ or $\dS^3$ is \emph{properly convex} when it is proper, has non-empty interior and is the intersection of closed half-spaces (the latter is equivalent to be closed and convex in $\H^3$, but the convexity is more subtle in $\dS^3$).

As the hyperbolic 3-space $\H^3$ is compactified by adding the boundary at infinity $\partial_{\infty} \H^3$, the de Sitter 3-space is compactified similarly by adding the boundary at infinity $\partial_{\infty} \dS^3$ consisting of two connected components, $\partial_{\infty}^+ \dS^3$ and $\partial_{\infty}^- \dS^3$, corresponding to the two parts of the light cone of the origin in $\R^4$. The component $\partial_{\infty}^+ \dS^3$ is naturally identified with $\partial_{\infty} \H^3$. We consider $\dS^3$ time-oriented with the future-directions pointing towards $\pt_\infty^+ \dS^3$.

There is a well-known duality between oriented planes in $\H^3$ and points in $\dS^3$, and a duality between space-like planes in $\dS^3$ and points in $\H^3$. Indeed, each plane in $\H^3$ is the intersection of $\H^3$ with a linear hyperplane in $\R^4$ passing through the origin. Its unit normal in the positive direction belongs to $\dS^3$. Conversely, for any point in $\dS^3$ the orthogonal hyperplane in $\R^4$ intersects $\H^3$ and determines an oriented geodesic plane there. Similarly, one produces a correspondence between space-like planes in $\dS^3$ and points in $\H^3$. Note that we consider the planes oriented only in $\H^3$. To avoid this asymmetry, one either needs to pass to the projective space (and consider the respective quotient of $\dS^3$), or add to the consideration the mirror copy of $\H^3$, however, for our purposes we prefer not to do this. 

If $C \subset \H^3$ is properly convex, then the dual points in $\dS^3$ to all outward-oriented planes that do not intersect the interior of $C$ form the dual properly convex set $C^* \subset \dS^3$. We say that $C^*$ is \emph{the Gauss dual} of $C$. By \emph{the Gauss map} we mean the multivalued map $\partial C \rightrightarrows \partial C^*$ sending a point $p \in \partial C$ to the set of the outward normals to all supporting planes to $C$ at $p$. It is easy to see that no supporting plane to $C^*$ is time-like, and that $\partial_{\infty}^+ \dS^3$ belongs to the convex side of $C^*$. We will call a properly convex set in $\dS^3$ satisfying these two conditions \emph{future-convex}. Equivalently, a future-convex set is a properly convex set that contains the future cone of every point, or a properly convex set that is the intersection of closed future half-spaces bounded by space-like planes. Conversely, for any future-convex set $C \subset \dS^3$ we similarly define its dual convex set $C^* \subset \H^3$. This duality is polar, i.e., $C^{**}=C$.

\subsection{Cone-metrics}
\label{conemetsec}

Let $S$ be a closed oriented surface and $V \subset S$ be a finite set of points.

\begin{dfn}
A \emph{triangulation} $\mathcal T$ of $(S, V)$ is a collection of simple disjoint paths with endpoints in $V$ that cut $S$ into triangles. Here a triangle is a 2-disk with three marked points at the boundary (possibly coinciding as points of $V$). Two triangulations with the same vertex set $V$ are \emph{equivalent} if they are isotopic by an isotopy fixing $V$.
\end{dfn}

We denote by $E(\mathcal T)$ the set of edges of $\mathcal T$.



Let $d$ be a cone-metric on $S$ (Definition~\ref{conedef}) and $V \subset S$ be a finite set containing the singular set $V(d)$ of $d$. In this situation we say that $d$ is a cone-metric on $(S, V)$.

\begin{dfn}
A \emph{geodesic triangulation} of $(S, V, d)$ is a triangulation of $(S, V)$ such that all edges are geodesics in $d$. 
\end{dfn}

\begin{dfn}
Let $\mathcal T$ be a triangulation of $(S, V)$ and $d$ be a cone-metric on $(S, V)$. We say that $d$ is \emph{$\mc T$-triangulable} if there is a geodesic triangulation of $(S, V, d)$ equivalent to $\mathcal T$. If $d$ is spherical, then we require additionally that each triangle is convex, i.e., isometric to a triangle in an open hemisphere.
\end{dfn}

We remark that not all triples $(S, V, d)$, when $d$ is a spherical cone-metric, admit a geodesic triangulation even without the condition on the convexity of triangles. An example is the standard sphere $\S^2$ with $V$ being three points contained in an open hemisphere. However, for concave large spherical cone-metrics there are no such obstructions.


\begin{lm}
\label{triang}
For each concave large spherical cone-metric $d$ on $(S, V)$ there exists a geodesic triangulation $\mathcal T$ of $(S, V, d)$ into convex triangles.
\end{lm}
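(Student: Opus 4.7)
The plan is to produce $\mathcal T$ as a maximal pairwise non-crossing (except at shared endpoints) collection of simple geodesic arcs on $(S,d)$ with endpoints in $V$ and length strictly less than $\pi$. Compactness of $S$ ensures that any such maximal collection is finite, so the task reduces to (a) showing that a nontrivial such collection exists and (b) showing that each face of the induced decomposition is a spherical triangle with marked vertices in $V$.

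For (a), fix a cone point $v \in V$ and consider the exponential map $\exp_v$ on the spherical cone-sector at $v$ of radius $\pi$, which carries total angle $\theta_v > 2\pi$. If no short arc exists from $v$ to another point of $V$, then $\exp_v$ avoids $V \setminus \{v\}$ on this sector. Since the sector has angle content exceeding $2\pi$ while $S$ is locally modeled on $\S^2$, the map cannot be globally injective, so two distinct geodesic rays from $v$ of lengths less than $\pi$ meet at a common point. The resulting piecewise-geodesic contractible loop at $v$ has length less than $2\pi$ and can be shortened within its free homotopy class to a contractible closed geodesic of length less than $2\pi$, contradicting largeness.

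For (b), suppose some face $F$ of a maximal $\mathcal T$ is not a spherical triangle. No cone point lies in $\inter F$: otherwise the argument of (a), applied inside $F$, produces a short geodesic from that cone point to a boundary vertex of $F$, violating maximality. Hence $F$ develops isometrically to a closed disk $\tilde F \subset \S^2$ bounded by a geodesic $k$-gon $\tilde P$ with all sides of length less than $\pi$. Monogons and bigons are immediately excluded (two geodesic arcs in $\S^2$ between distinct points at distance less than $\pi$ coincide, and a geodesic arc of length less than $\pi$ does not close up), so $k \geq 3$. If $k \geq 4$, the goal is to produce a diagonal of $\tilde P$ of length less than $\pi$ whose preimage in $S$ is an embedded arc inside $F$; this contradicts the maximality of $\mathcal T$.

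The main obstacle is the diagonal claim in the previous paragraph. A geodesic $k$-gon in $\S^2$ with all sides less than $\pi$ need not admit any diagonal of length less than $\pi$ (witness the equatorial square of side $\pi/2$, whose two diagonals both have length $\pi$), so the borderline configurations must be ruled out using the hypotheses. The strategy is to combine Gauss--Bonnet for $\tilde F$ with the angle data at vertices of $\partial F$ inherited from the concavity of $d$, and spherical trigonometry, to force at least one diagonal of $\tilde P$ to be strictly shorter than $\pi$ in the generic case; the remaining borderline cases, in which some would-be diagonal of $\tilde P$ has length exactly $\pi$, are excluded by an appeal to largeness, which forbids the short closed contractible loops in $(S,d)$ that such a configuration would produce. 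Finally, a topological argument using that $F$ is a disk shows that the short diagonal pulls back to an embedded arc inside $F$, completing the contradiction.
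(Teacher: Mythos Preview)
The paper does not give a direct proof of this lemma; it cites Rivin's work and, for the higher-genus case actually used, invokes Schlenker's Fuchsian realization theorem, from which a geodesic triangulation with edge-lengths $<\pi$ is read off the face decomposition of the realized boundary in $\dS^3$. Your proposal attempts an intrinsic argument in the spirit of Rivin, but several steps are not carried through and some are incorrect as stated.

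The main problems are in (b). First, you assert that every face $F$ of a maximal short-arc system is a disk; on a surface of positive genus this is not automatic, and you give no mechanism to cut a residual handle by an arc of length $<\pi$. Second, even if $F$ is a disk without interior cone points, the developing map $F\to\S^2$ is only a local isometry and need not be an embedding (reflex boundary angles can make $\mathrm{area}(F)$ large), so there is no well-defined polygon $\tilde P\subset\S^2$, and even if there were, a diagonal of $\tilde P$ need not pull back to a \emph{simple} arc in $F$. Third, you yourself identify the ``diagonal claim'' as the crux and give only a vague strategy (Gauss--Bonnet plus concavity plus largeness) without proving anything; your equatorial-square example shows exactly why this is delicate. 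There is also a gap in (a): you claim the bigon formed by two colliding rays from $v$ is contractible, which is unjustified on a surface of positive genus; and if it \emph{were} contractible, curve-shortening would collapse it to a point rather than produce a closed geodesic, so largeness is not violated either way. In short, the outline points in the right direction but the substantive work---which is the content of Rivin's paper---is missing.
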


This was basically shown by Rivin in~\cite{Riv} although he does not formulate the convexity condition on the triangles, but it follows from his proof. For surfaces of genus $>1$, which is the case we need, this also follows directly from the Fuchsian version of Theorem~\ref{sch2} due to Schlenker~\cite{Sch5}. It says that each concave large spherical cone-metric on $S$ can be realized as the dual metric on the boundary of a Fuchsian manifold of the first kind. When we consider the respective dual embedding of the universal cover to $\dS^3$, we see that any invariant geodesic triangulation refining the face decomposition projects to a geodesic triangulation of the metric into convex triangles.

We claim that if a concave large spherical cone-metric $d$ on $(S, V)$ is $\mc T$-triangulable, then a geodesic triangulation into convex triangles equivalent to $\mc T$ is unique. Indeed, if between two points on the standard 2-sphere there exists a geodesic segment of length $<2\pi$ (i.e., the points are not diametrically opposite), then this segment is unique. Hence, if two geodesic triangulations of $(S, V, d)$ into convex triangles are isotopic, then the lengths of all edges are equal. It follows that there exists a self-isometry of $(S, V, d)$ isotopic to the identity by an isotopy fixing $V$. Consider the Fuchsian realization of $(S, V, d)$ given by~\cite{Sch5}. Since the Fuchsian realization is unique, the self-isometry of $d$ extends to a self-isometry of the Fuchsian realization. Every Fuchsian manifold of the first kind contains a unique totally geodesic copy of $S$, which then also admits a self-isometry isotopic to the identity. It is, however, easy to see that no closed hyperbolic surface admits such an isometry.


For a map $f: (S_1, d_1) \rightarrow (S_2, d_2)$ between two compact metric spaces we define its \emph{distortion} ${\rm dist}(f)$ as
$${\rm dist}(f):=\sup_{p, q\in S_1, p \neq q}\left|\ln\frac{d_2(f(p), f(q))}{d_1(p,q)}\right|.$$
We say that a sequence of metrics $\{d_i\}$ on a surface $S$ converges to a metric $d$ \emph{in the Lipschitz sense} if the identity maps $(S, d_i) \rightarrow (S, d)$ have distortions $\e_i \rightarrow 0$. We remark that some authors allow the action by homeomorphisms in the definitions of the Lipschitz convergence, e.g., in~\cite[Section 6]{HR}. In a more general setting the Lipschitz convergence may be defined between non-homeomorphic metric spaces, see~\cite[Section 7.2]{BBI}.
We denote by $\mathfrak D(S, V)$ the set of spherical cone-metrics on $(S, V)$ that are $\mc T$-triangulable for at least one triangulation. We endow it with the topology of Lipschitz convergence.

We will need two equivalence relations on $\mf D(S, V)$. Denote by $H_0(S, V)$ the group of self-homeomorphisms of $S$ fixing $V$ and isotopic to identity, and denote by $H^\sharp_0(S, V)$ its normal subgroup consisting of those homeomorphisms that are isotopic to the identity by an isotopy fixing $V$. The quotient $B_0(S, V):=H_0(S, V)/H^\sharp_0(S, V)$ is the \emph{pure braid group} of $(S, V)$. The group $H_0(S, V)$ acts on $\mf D(S, V)$ and we denote the quotient by $\mc D(S, V)$, while we denote the quotient by $H^\sharp_0(S, V)$ by $\mc D^\sharp(S, V)$. It follows that $\mc D(S, V)$ is the quotient of $\mc D^\sharp(S, V)$ by $B_0(S, V)$.  When the context is clear, we may abuse the notation and write $d \in \mathcal D(S, V)$, when $d$ is a metric representing a class belonging to $\mathcal D(S, V)$.
We will mostly deal with the space $\mc D(S, V)$, though in the last stages of the proof, for technical reasons we will have to employ the space $\mc D^\sharp(S, V)$.


For a triangulation $\mathcal T$ of $(S, V)$ we denote by $\mf D(S, \mathcal T) \subset \mf D(S, V)$ the subset of $\mc T$-triangulable metrics. It is $H^\sharp_0(S, V)$-invariant, and we denote by $\mathcal D^\sharp(S, \mathcal T) \subset \mathcal D^\sharp(S, V)$ its quotient. 
The set $\mathcal D^\sharp(S, \mathcal T)$ can be considered as an open polyhedron in $\R^{E(\mathcal T)}$ defined by the strict triangle inequalities and the conditions on edge lengths to be in $(0, \pi)$. The charts $\mathcal D^\sharp(S, \mathcal T) \ra \R^{E(\mc T)}$ endow $\mathcal D^\sharp(S, V)$ with the structure of a real-analytic manifold of dimension $3(n-k)$ where $n:=|V|$ and $k:=\chi(S)$. It is easy to see that this topology coincides with the quotient topology induced from $\mf D(S, V)$. The group $B_0(S, V)$ acts on $\mc D^\sharp(S, V)$ freely and properly discontinuously, thereby $\mc D(S, V)$ is also a real-analytic manifold of dimension $3(n-k)$.

Pick $d \in \mf D(S, \mc T)$. There is a canonical section through $d$ of the forgetful map $\mf D(S, \mc T) \ra \mc D^\sharp(S, \mc T)$. Indeed, for any two convex spherical triangles $T_1$ and $T_2$ with marked vertices there exists a canonical ``affine'' homeomorphism between them preserving the marking. To see this, develop $T_1$ and $T_2$ on the standard sphere $\S^2$ embedded in Euclidean 3-space $\R^3$. Project $T_1$ radially to the Euclidean triangle subtending it, send it by the unique linear map to the affine triangle subtending $T_2$ and project it radially to $T_2$. This defines the desired map. For every class $d \in \mf D(S, \mc T)$ such affine maps produce the canonical section of $\mf D(S, \mc T) \ra \mc D^\sharp(S, \mc T)$ through $d$. Acting by $H^\sharp_0(S, V)$ on the metrics of this section, one can see that $\mf D(S, \mc T) \cong \mc D^\sharp(S, \mc T) \times H^\sharp_0(S, V)$. This produces the structure of a principal bundle on $\mf D(S, V) \ra \mc D^\sharp(S, V)$ with the structure group $H^\sharp_0(S, V)$. Since the latter is known to be contractible~\cite{Ham}, this bundle is trivial.

We will need the following simple lemma:

\begin{lm}
\label{fintriang}
Let $d \in \mf D(S, V)$. There exists a neighborhood $U \ni d$ in $\mf D(S, V)$ such that there are only finitely many classes of triangulations $\mc T$, for which there exists a $\mc T$-triangulable metric in $U$.
\end{lm}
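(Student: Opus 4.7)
The plan is to argue by contradiction: if the conclusion failed, there would be a sequence $d_n\to d$ in $\mathcal D^*(S,V)$ realizing pairwise distinct triangulations $\mathcal T_n$, and a compactness argument would force them to eventually coincide.

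First I would prove that $(S,d)$ itself admits only finitely many geodesic arcs of length at most $\pi$ with endpoints in $V$. I pass to the universal cover $\widetilde S$ endowed with the lifted metric $\widetilde d$. Since $(S,d)$ is a compact length space, $(\widetilde S,\widetilde d)$ is complete and locally compact, hence proper by Hopf--Rinow, so closed balls of radius $\pi$ in $\widetilde S$ are compact. The group $\pi_1(S)$ acts properly discontinuously on $\widetilde S$, so each compact set meets any orbit in a finite set; fixing one lift $\widetilde v$ per $v\in V$, only finitely many lifts of any $w\in V$ lie in $\overline{B}(\widetilde v,\pi)$. Moreover, any closed geodesic in $\widetilde S$ projects to a closed contractible geodesic in $(S,d)$, so by largeness every closed geodesic in $\widetilde S$ has length $>2\pi$; combined with the local CAT$(1)$ property of concave spherical cone-metrics at cone points of angle $>2\pi$, a standard globalization theorem implies that $(\widetilde S,\widetilde d)$ is CAT$(1)$. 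Hence any two points of $\widetilde S$ at distance $<\pi$ are joined by a unique geodesic, and each pair of fixed lifts contributes at most one geodesic arc of length $<\pi$ in $(S,d)$, leaving only finitely many such arcs in total.

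Next I would carry out the contradiction: assume $d_n\to d$ with pairwise distinct realized $\mathcal T_n$. Each has $E=3(|V|-\chi(S))$ edges; after reindexing and passing to a subsequence, I may assume the endpoints in $V\times V$ of each indexed edge $e^n_i$ do not depend on $n$. Since each $e^n_i$ is a $\pi$-Lipschitz path in the compact space $S$, by Arzel\`a--Ascoli and diagonal extraction the $e^n_i$ converge uniformly to paths $e^\infty_i$, which as limits of geodesic arcs for the convergent metrics $d_n\to d$ are themselves geodesic arcs in $d$ of length at most $\pi$. These limits are non-degenerate: if the endpoints of $e^n_i$ differ, the length is bounded below by $\min\{d(v,w):v,w\in V,\ v\neq w\}>0$; if they agree, by the positive systole of $(S,d)$.

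By the first step there are only finitely many possible $E$-tuples of limit arcs, so after a further subsequence the tuple $(e^\infty_i)$ is independent of $n$. Each non-degenerate geodesic arc in a cone-metric has a tubular neighborhood in $S$ that deformation retracts onto it, so for $n$ large $e^n_i$ is isotopic rel.\ endpoints to $e^\infty_i$ on $S$. Hence all $\mathcal T_n$ eventually share the same isotopy classes of edges and so coincide as isotopy classes of triangulations, contradicting their distinctness. The main obstacle is the CAT$(1)$ step: verifying that concave spherical cone-metrics are locally CAT$(1)$ via their tangent cones, and then promoting this to the global CAT$(1)$ property through a globalization theorem and the largeness assumption, is where the geometric content of the hypothesis genuinely enters and requires some care.
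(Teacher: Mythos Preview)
Your overall strategy---Arzel\`a--Ascoli on edges, extracting a limiting configuration, and deducing eventual coincidence---is close in spirit to the paper's argument. However, there is a genuine gap in your finiteness step.

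The lemma is stated for arbitrary $d\in\mathcal D^*(S,V)$, which by definition is the space of \emph{all} spherical cone-metrics on $(S,V)$ realizing at least one triangulation. Neither concavity (cone angles $>2\pi$) nor largeness is assumed. Your argument that $(\widetilde S,\widetilde d)$ is CAT$(1)$ explicitly invokes both: local CAT$(1)$ at the cone points uses that the angles exceed $2\pi$, and the globalization step uses that contractible closed geodesics have length $>2\pi$. So as written, your proof establishes the lemma only on the subspace of concave large metrics, not on $\mathcal D^*(S,V)$. The paper avoids this entirely: it uses only the elementary fact (noted just before the lemma) that in \emph{any} spherical cone-metric a geodesic path of length $<\pi$ is isolated, and that a geodesic path of length exactly $\pi$ is isolated from geodesic representatives of other isotopy classes. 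Combined with Arzel\`a--Ascoli compactness of the set of geodesic arcs of length $\le\pi$, this yields directly that only finitely many isotopy classes on $S\setminus V$ of such arcs exist, hence only finitely many triangulations $\mathcal T$ have $d$ in the closure of $\mathcal D^*(S,\mathcal T)$.

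There is also a secondary gap even within your CAT$(1)$ framework: uniqueness of geodesics in a CAT$(1)$ space holds only for pairs at distance strictly less than $\pi$, while your limit arcs $e_i^\infty$ may have length exactly $\pi$. Thus CAT$(1)$ alone does not give finitely many geodesic arcs of length $\le\pi$, which is what your second step requires. The paper's isolation statement for length-$\pi$ paths handles precisely this boundary case.
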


\begin{proof}
We show that for any $v, w \in V$ there are only finitely many isotopy classes on $S \backslash V$ of paths between $v$ and $w$ containing a geodesic path of length $\leq \pi$ in $d$. Indeed, suppose the converse. 
The set of all geodesic paths from $v$ to $w$ of length $\leq \pi$ is uniformly Lipschitz. Thus, by the Arzel\'a--Ascoli Theorem it is compact. Hence, if there are infinitely many such classes, we can take a geodesic representative of length $\leq \pi$ from each, and extract a converging subsequence. However, every geodesic path of length $< \pi$ between $v$ and $w$ is isolated, i.e., it has a neighborhood where there are no other geodesic paths on $S\backslash V$ between its endpoints, and every geodesic path of length $\pi$ is isolated from geodesic representatives of other classes on $S\backslash V$. Thus, no path can appear in the limit.

This argument implies that there are finitely many equivalence classes of triangulations that have a geodesic representative in $d$ with all edges $\leq \pi$. Thus, there are finitely many triangulations $\mathcal T$ such that $d$ belongs to the closure of $\mf D(S, \mathcal T)$. This finishes the proof.
\end{proof}


Denote by $\mf D_c(S, V) \subset \mf D(S, V)$ the subset of concave spherical cone-metrics on $(S, V)$ with $V(d)=V$ and by $\ol{\mf D}_c(S, V)\subset \mf D(S, V)$ denote just the subset of concave spherical cone-metrics on $(S, V)$. The latter is the topological closure of the former in $\mf D(S, V)$. The same notation applies to the respective subsets of $\mc D^\sharp(S, V)$ and $\mc D(S, V)$, and also when $V$ is replaced by $\mc T$.

Now let $M$ be an admissible 3-manifold, $V \subset \partial M$ be a finite set of points with at least one point at each boundary component. We denote by $\mf D_M(\pt M, V)$ the subset of concave large for $M$ spherical cone-metrics on $(\pt M, V)$ with $V(d)=V$. We denote by $\ol{\mf D}_M(\pt M, V) \subset \mf D(\pt M, V)$ just the subset of concave large for $M$ spherical cone-metrics on $(\pt M, V)$. We note that here our notation has a downside as $\ol{\mf D}_M(\pt M, V)$ is not the topological closure of $\mf D_M(\pt M, V)$ because the topological closure also contains metrics with closed contractible in $M$ geodesics of length $2\pi$. The same notation applies to the respective subsets of $\mc D^\sharp(\pt M, V)$ and $\mc D(\pt M, V)$, and also when $V$ is replaced by $\mc T$. We will show that $\mf D_M(\partial M, V)$ is an open subset of $\mf D(\partial M, V)$. Clearly, the concavity condition is open, so we need to check the condition of largeness for $M$. For a metric $d$ on $\partial M$ denote by ${\rm ml}_M(d)$ the infimum of lengths of all closed geodesics in $(\partial M, d)$ that are contractible in $M$.

We denote by $\S^1_l$ the metric circle of length $l$. A \emph{closed global geodesic} in a metric space $(S, d)$ is the image of an isometric map $\S^1_l \rightarrow (S, d)$ (here it is crucial that it preserves distances in $\S^1_l$ globally). A \emph{closed $\e$-quasi-geodesic} is the image of $S^1_l$ under a map $S^1_l \rightarrow (S, d)$ with distortion at most $\e$. The following consequence of the Arzel\'a--Ascoli Theorem was shown in~\cite[Theorem 6.1]{HR}:

\begin{lm}
\label{geodconv}
Let $\{d_i\}$ be a sequence of metrics on $S$ converging in the Lipschitz sense to a metric $d$. Let $\{\gamma_i \subset S\}$ be closed $\e_i$-quasi-geodesics in $d_i$ with $\e_i \rightarrow 0$ and the lengths $l_i$ of $\gamma_i$ converging to $l>0$. Then up to extracting a subsequence, $\gamma_i$ converge to a closed global geodesic $\gamma \subset S$ in $d$ of length $l$.
\end{lm}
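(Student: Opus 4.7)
The plan is to parametrize the quasi-geodesics on a common domain and extract a uniform limit via Arzel\`a--Ascoli. For each $i$, pick a map $\phi_i: \S^1_{l_i} \to (S, d_i)$ whose image is $\gamma_i$ and whose distortion is at most $\e_i$, as provided by the definition of a closed $\e_i$-quasi-geodesic. I reparametrize by precomposing with the rescaling $\S^1_l \to \S^1_{l_i}$, which has distortion $|\ln(l_i/l)|$, and by postcomposing with the identity $(S, d_i) \to (S, d)$, whose distortion equals some $\mu_i$ tending to $0$ by the Lipschitz convergence hypothesis. Using the elementary inequality $\dist(g\circ f) \leq \dist(f)+\dist(g)$, the resulting maps $\psi_i: \S^1_l \to (S, d)$ satisfy
\[
\left|\ln \frac{d(\psi_i(p), \psi_i(q))}{d_{\S^1_l}(p,q)}\right| \;\leq\; \e_i + \mu_i + |\ln(l_i/l)| \;=:\; \delta_i,
\]
and $\delta_i \to 0$ by our hypotheses $\e_i \to 0$, $l_i \to l > 0$.

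Since the $\psi_i$ are uniformly Lipschitz into the compact metric space $(S, d)$, the Arzel\`a--Ascoli theorem yields a subsequence converging uniformly to a continuous map $\gamma: \S^1_l \to (S, d)$. Passing to the uniform limit in the two-sided estimate above gives $d(\gamma(p), \gamma(q)) = d_{\S^1_l}(p,q)$ for every $p, q \in \S^1_l$, so $\gamma$ is an isometric embedding. By definition its image is a closed global geodesic of length $l$ in $(S, d)$. Uniform convergence of parametrizations translates immediately into convergence of the images $\gamma_i \to \gamma(\S^1_l)$ as subsets of $S$, for instance in the Hausdorff sense with respect to $d$ (equivalently, with respect to $d_i$ for $i$ large, since $\mu_i \to 0$).

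The main point requiring care is the bookkeeping of distortions under reparametrization and change of ambient metric; once this reduces to the additivity of distortion under composition, the rest is a direct application of Arzel\`a--Ascoli together with the stability of the distance-preserving condition under uniform limits. I emphasize that Lipschitz convergence is used essentially: it is precisely what lets us transfer the quasi-geodesic estimate from $d_i$ to $d$ uniformly in $i$, a step that would fail for weaker modes of convergence of the metrics.
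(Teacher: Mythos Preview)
Your proof is correct and follows exactly the Arzel\`a--Ascoli strategy the paper points to; the paper itself does not give a proof but simply cites \cite[Theorem 6.1]{HR}, noting that the lemma is a consequence of Arzel\`a--Ascoli. Your write-up supplies the details of that citation: the subadditivity of distortion under composition handles the reparametrization and the change of ambient metric, and the passage to the limit in the two-sided distortion bound yields the global isometry.
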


From this we can deduce

\begin{crl}
Let $\{d_i\}$ be a sequence of metrics on $\partial M$ converging in the Lipschitz sence to a metric $d$. Then
$${\rm ml}_M(d) \leq \liminf_{i \rightarrow \infty} {\rm ml}_M(d_i).$$
\end{crl}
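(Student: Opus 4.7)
The plan is to extract near-minimizing closed contractible geodesics from the $d_i$ and invoke Lemma~\ref{geodconv} to produce a limit geodesic in $d$.

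Let $l := \liminf_{i \to \infty} {\rm ml}_M(d_i)$. If $l = +\infty$ the inequality is trivial, so assume $l$ is finite, and by passing to a subsequence arrange ${\rm ml}_M(d_i) \to l$. For each $i$ I would pick a closed geodesic $\gamma_i$ in $(\partial M, d_i)$ that is contractible in $M$ and whose length $l_i$ satisfies $l_i < {\rm ml}_M(d_i) + 1/i$, so that $l_i \to l$. The Lipschitz convergence gives distortions $\e_i \to 0$ for the identity maps $(\partial M, d_i) \to (\partial M, d)$; composing the isometric parametrization $\S^1_{l_i} \to (\partial M, d_i)$ of $\gamma_i$ with the identity realizes $\gamma_i$ as a closed $\e_i$-quasi-geodesic in $(\partial M, d)$ of length $l_i$.

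Assuming $l > 0$, Lemma~\ref{geodconv} applies and, after a further subsequence, yields a closed global geodesic $\gamma$ in $(\partial M, d)$ of length $l$ as the limit of the $\gamma_i$. The remaining step is to certify that $\gamma$ is contractible in $M$: the uniform convergence of the parametrizations on circles of lengths $l_i \to l$ implies that for all sufficiently large $i$ the curves $\gamma_i$ lie in an arbitrarily thin tubular neighborhood of $\gamma$ in $\partial M$ and wrap around it with the same multiplicity, making $\gamma_i$ freely homotopic to $\gamma$ in $\partial M$, and hence in $M$. Since $\gamma_i$ is contractible in $M$, so is $\gamma$, yielding ${\rm ml}_M(d) \leq l$. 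The degenerate case $l = 0$ does not arise in the applications of the corollary, where the metrics are uniformly large; if needed it can be handled separately, since the short loops $\gamma_i$ persist as short loops in $(\partial M, d)$ and can be shortened within their homotopy classes.

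The only point of technical care is the contractibility of the limit $\gamma$, but once uniform convergence with matching lengths on the compact surface $\partial M$ is established, free homotopy between $\gamma_i$ and $\gamma$ is a routine consequence of surface topology. Thus the bulk of the argument is carried by Lemma~\ref{geodconv}, and the corollary is a straightforward extraction.
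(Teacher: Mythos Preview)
Your overall strategy matches the paper's, but there is a real gap at the step where you invoke Lemma~\ref{geodconv}. That lemma requires the $\gamma_i$ to be closed $\e_i$-quasi-geodesics, i.e., images of maps $\S^1_{l_i}\to(\partial M,d_i)$ with distortion tending to zero. You speak of ``the isometric parametrization $\S^1_{l_i}\to(\partial M,d_i)$ of $\gamma_i$,'' but an arbitrary closed geodesic in the sense used to define ${\rm ml}_M$ (a locally geodesic closed curve) does \emph{not} in general admit such a parametrization: two points of $\gamma_i$ may be much closer in $(\partial M,d_i)$ than along $\gamma_i$, so the arc-length map can have large distortion. Only closed \emph{global} geodesics are $0$-quasi-geodesics, and nothing you wrote forces your near-minimizers to be global.

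The paper inserts precisely the step you are missing: before applying Lemma~\ref{geodconv} to the sequence $d_i\to d$, it first argues that ${\rm ml}_M(d_i)$ is \emph{realized} by a closed global geodesic. The mechanism is to pass to the universal cover $\tilde M$, observing that closed geodesics in $(\partial M,d_i)$ contractible in $M$ correspond to closed geodesics in $(\partial\tilde M,d_i)$; from this one extracts a closed global geodesic of length at most that of any given closed geodesic, and then a first application of Lemma~\ref{geodconv} (with the constant sequence $d_i$) to a minimizing sequence of global geodesics yields a realizer. Once each $\gamma_i$ is chosen global, your argument goes through, and your treatment of the contractibility of the limit agrees with the paper's (stability of free homotopy classes under convergence). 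So your proof needs this reduction to global geodesics before Lemma~\ref{geodconv} can be applied.
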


\begin{proof}
When a sequence of closed curves on $\partial M$ converges, their free homotopy classes stabilize. Hence, the limit of a sequence of curves in $\partial M$ that are contractible in $M$ is also contractible in $M$. Next, for each closed geodesic in $(\partial M, d_i)$ of length $l$ that is contractible in $M$ there exists a closed global geodesic of length at most $l$ that is contractible in $M$. Indeed, this is because there is a correspondence between closed geodesics in $(\partial M, d_i)$ contractible in $M$ and closed geodesics in $(\partial \tilde M, d_i)$, where $\tilde M$ is the universal cover of $M$. Thus, by Lemma~\ref{geodconv} the infimum ${\rm ml}_M(d_i)$ is realized by a closed global geodesic $\gamma_i$ contractible in $M$. Applying Lemma~\ref{geodconv} again, we see that a subsequence of $\gamma_i$ converges to a closed global geodesic in $(\partial M, d)$ contractible in $M$ of length $\liminf_{i \rightarrow \infty} {\rm ml}_M(d_i)$. 
\end{proof}

\begin{crl}
\label{semicont}
The function ${\rm ml}_M$ is lower-semicontinuous in the Lipschitz topology of metrics on $\partial M$, i.e., for each $l \in \R$ the set of metrics $d$ on $\partial M$ satisfying ${\rm ml}_M(d)>l$ is open. In particular, $\mf D_M(\partial M, V)$ is an open subset of $\mf D(\partial M, V)$.
\end{crl}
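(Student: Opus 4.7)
The plan is to derive both assertions directly from the preceding corollary. That corollary established ${\rm ml}_M(d) \leq \liminf_{i \rightarrow \infty} {\rm ml}_M(d_i)$ whenever $d_i \to d$ in the Lipschitz sense, which is precisely the sequential characterization of lower-semicontinuity. Hence, if $d$ lies in the superlevel set $\{d' : {\rm ml}_M(d') > l\}$ and a sequence $d_i$ converges to $d$ in the Lipschitz topology, then $\liminf_{i \rightarrow \infty} {\rm ml}_M(d_i) \geq {\rm ml}_M(d) > l$, so the $d_i$ eventually lie in this superlevel set as well. This gives openness of the superlevel sets and proves the first claim.

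For the second claim, I would show that both defining conditions of $\mathcal D^*_M(\partial M, V)$ inside $\mathcal D^*(\partial M, V)$ are open. In a chart $\mathcal D^*(\partial M, \mathcal T)$ the cone-angles at vertices of $V$ are continuous functions of the spherical edge-length coordinates, so the requirement that every cone-angle at $V$ strictly exceeds $2\pi$ is open; this simultaneously forces $V(d') = V$ for nearby $d'$, since the perturbed angles remain strictly above $2\pi$ and in particular cannot equal $2\pi$. For the largeness condition ${\rm ml}_M > 2\pi$, I would invoke the remark in the excerpt that convergence in $\mathcal D^*(\partial M, V)$ can be realized by Lipschitz-convergent representatives, and then pull back the open superlevel set $\{{\rm ml}_M > 2\pi\}$ from the Lipschitz topology via the first claim.

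I do not anticipate a serious obstacle here. The substantive geometric input is the preceding corollary, whose proof used Lemma \ref{geodconv} to extract limit geodesics and preserve the contractibility condition. The only point that deserves attention in the present statement is the compatibility between Lipschitz convergence and the chart structure on $\mathcal D^*(\partial M, V)$, which is built into the definition of the smooth manifold structure on $\mathcal D^*(\partial M, V)$ and needs no separate argument.
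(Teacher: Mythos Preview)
Your proposal is correct and matches the paper's approach: the paper states this corollary without proof, having already noted just before the corollaries that ``clearly, the concavity condition is open, so we need to check the condition of largeness for $M$,'' and then treats the lower-semicontinuity as an immediate reformulation of the preceding $\liminf$ inequality. Your elaboration on why concavity together with $V(d)=V$ is open (cone-angles being continuous in chart coordinates) and on the compatibility with Lipschitz convergence simply spells out what the paper leaves implicit.
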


\subsection{Convex cocompact hyperbolic manifolds}
\label{cocomp}

Let $g$ be a hyperbolic metric on $M$ such that $\partial M$ is locally convex. Recall that we denote by $G$ the group of orientation-preserving isometries of $\H^3$. It is naturally isomorphic to ${\rm PSL}(2, \C)$.


A developing map embeds the universal cover $(\tilde M, g)$ as a convex set in $\H^3$ (see~\cite[Section 1.4]{CEG}) and the respective holonomy map is a discrete and faithful representation $\rho: \pi_1(M) \rightarrow G$. The quotient $N$ of $\H^3$ by $\rho$ is a complete hyperbolic 3-manifold, to which $M$ naturally embeds inducing an isomorphism of the fundamental groups. Denote this embedding by $\iota: M \hookrightarrow N$. It is easy to see that $N$ is homeomorphic to $\inter(M)$ by a map $\kappa$ such that $\iota\circ\kappa$ is homotopic to the identity.


For the rest of the paper, however, we will use a convention that $N$ is a smooth 3-manifold without a hyperbolic structure, but homeomorphic to $\inter(M)$. We will consider such a homeomorphism fixed for the whole paper, so, particularly, there is a fixed 1-1 correspondence between ends of $N$ and components of $\pt M$. The construction above endows $N$ with a convex cocompact hyperbolic metric defined up to isotopy, which we denote by $\ol g$, and with an isometric embedding $\iota:(M, g)\ra (N, \ol g)$.
We will denote the pair $(N, \overline g)$ rather by $N(\overline g)$, so $N(\overline g)$ is a hyperbolic 3-manifold, while $N$ is the underlying smooth 3-manifold. We denote by $\partial_{\infty} N(\overline g)$ its boundary at infinity, which is homeomorphic to $\partial M$ and is equipped with a conformal structure. We denote by $M(g)$ the $\iota$-image of $(M, g)$ in $N(\overline g)$. Generally we denote by $\overline g$ the convex cocompact hyperbolic metric on $N$ obtained from a metric $g$ as above. However, sometimes we will abuse the notation, and will denote by $\overline g$ a convex cocompact hyperbolic metric on $N$ in the absence of a metric $g$.


Abusing the notation, we denote the extension of $\overline g$ to the universal cover $\tilde N$ still by $\overline g$, and the hyperbolic 3-manifold $(\tilde N, \overline g)$ by $\tilde N(\overline g)$. We will frequently identify $\tilde N(\overline g)$ with $\H^3$ by an arbitrary developing map (recall that a developing map is defined up to a global isometry of $\H^3$).
We denote by $\tilde M(g)$ the respective embedding of the universal cover of $M(g)$ as a convex set. 
Let $\rho_{\overline g}: \pi_1(M) \rightarrow G$ be the corresponding holonomy representation. We denote by $\Lambda(\overline g)=\Lambda(\rho_{\ol g})$ its limit set and by $\tilde C(\overline g)=\tilde C(\rho_{\overline g})$ we denote the convex hull of $\Lambda(\overline g)$. If $\Lambda(\overline g)$ is contained in a circle, we say that $\overline g$ and $\rho_{\overline g}$ are \emph{Fuchsian}. They are \emph{Fuchsian of the first kind} if $\Lambda(\overline g)$ is a circle, and \emph{of the second kind} otherwise. The set $\tilde C(\overline g)$ is $\rho_{\overline g}$-invariant and projects to the \emph{convex core} $C(\overline g) \subset N(\ol g)$. The latter is totally convex, and is homeomorphic to $M$, except in the Fuchsian case. We will refer to the connected components of $N(\ol g) \backslash C(\ol g)$ and of $\tilde N(\ol g) \backslash \tilde C(\ol g)$ as to the \emph{geometric ends} of $N(\ol g)$ and $\tilde N(\ol g)$. Since the $\rho_{\overline g}$-orbit of any point of $\H^3$ accumulates at $\Lambda(\overline g)$, $C(\overline g)$ is contained in any totally convex subset of $N(\overline g)$. It is also well-known that

\begin{lm}
The induced path metric on $\partial C(\overline g)$ is hyperbolic.
\end{lm}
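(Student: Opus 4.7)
The plan is to lift the problem to the universal cover and establish that $\pt \tilde C(\ol g) \subset \H^3$ is a \emph{pleated surface} — a union of totally geodesic pieces glued along a geodesic lamination — from which local hyperbolicity of the induced path metric will follow. The metric will then descend to $\pt C(\ol g)$, because $\rho_{\ol g}(\pi_1(M))$ acts on $\tilde C(\ol g)$ by restrictions of isometries of $\H^3$ and thus by isometries of the intrinsic geometry of $\pt \tilde C(\ol g)$.

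The first step is to show that every $p \in \pt \tilde C(\ol g)$ lies on a complete geodesic of $\H^3$ that is contained in $\pt \tilde C(\ol g)$. Fix a supporting plane $P$ to $\tilde C(\ol g)$ at $p$ and consider the face $F := P \cap \tilde C(\ol g)$, a closed convex subset of $P \cong \H^2$. Since $\tilde C(\ol g)$ is the convex hull in $\H^3$ of $\Lambda(\ol g) \subset \pt_\infty \H^3$ and $\Lambda(\ol g)$ lies on one side of $P$, the face $F$ is the convex hull in $P$ of the ideal set $\Lambda(\ol g) \cap \pt_\infty P$. Because $P$ is supporting this intersection is non-empty, and because $p$ is a finite point while the convex hull of a single ideal point is that point itself, $\Lambda(\ol g) \cap \pt_\infty P$ must contain at least two points. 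Hence $F$ is an ideal polygon (with finitely or infinitely many sides), and $p$ lies either in its relative interior or on one of its edges. In either case, a complete geodesic of $P \subset \H^3$ passing through $p$ is contained in $F \subset \pt \tilde C(\ol g)$.

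The collection of all such geodesics endows $\pt \tilde C(\ol g)$ with the structure of a pleated surface: the two-dimensional faces $F$ form the flat pieces, glued along the \emph{pleating lamination} $L$ consisting of the bending geodesics that are not contained in any 2-dimensional face. For a point $p$ in the relative interior of a flat piece, the induced path metric is tautologically that of $\H^2$. For a point on an isolated bending geodesic separating two adjacent flats, one unfolds the two pieces across the common geodesic into a single copy of $\H^2$, and a sufficiently small neighborhood of $p$ is isometric to a disk there. For a point on a leaf of $L$ accumulated by other leaves, one uses the standard developing construction for pleated surfaces: bending along a geodesic lamination preserves lengths of transverse paths, so a small neighborhood of $p$ admits an isometry onto a disk in $\H^2$. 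Taking the quotient by $\rho_{\ol g}$, one obtains a hyperbolic structure on $\pt C(\ol g)$ compatible with its induced path metric.

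The main obstacle is the last case in the local analysis, namely making rigorous that bending along a possibly uncountable geodesic lamination produces a locally hyperbolic metric with no hidden cone-like singularities along $L$. This is the technical core of Thurston's theory of pleated surfaces and is treated in \cite{Thu, CEG, BO}; in the present context it can be invoked as a black box once Step~1 has supplied a geodesic of $\H^3$ in $\pt \tilde C(\ol g)$ through every boundary point.
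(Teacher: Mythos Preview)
The paper does not prove this lemma; it simply cites \cite{EM} and \cite{Smi} (and later mentions Thurston's notes). Your sketch follows exactly the standard pleated-surface argument found in those references: through every point of $\partial\tilde C(\ol g)$ passes a complete geodesic of $\H^3$, and the resulting decomposition into flat pieces and bending lamination yields a locally hyperbolic intrinsic metric. So your approach is the canonical one and is correct.

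Two minor remarks. First, the identity $P\cap\tilde C(\ol g)=\conv_P(\Lambda(\ol g)\cap\partial_\infty P)$ that you use is true but not entirely automatic; it is a short lemma in \cite{EM} relying on the fact that $\Lambda(\ol g)$ lives at infinity (one rotates $P$ slightly about a geodesic of $P$ separating the point from $\Lambda(\ol g)\cap\partial_\infty P$). Second, calling $F$ an ``ideal polygon'' is a slight abuse, since $\Lambda(\ol g)\cap\partial_\infty P$ can be any closed subset of the circle; what matters, and what you use, is only that $\partial F$ in $P$ is a union of complete geodesics and that $F$ contains at least one complete geodesic through $p$. These are cosmetic points; the argument is sound, and you are right that the only genuinely delicate step---hyperbolicity at leaves accumulated by other leaves---is exactly what is handled in the cited sources.
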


See, e.g.,~\cite{EM} or~\cite{Smi}. Thus, a connected component of $\partial C(\overline g)$ is an embedded hyperbolic surface, which, however, may not be  totally geodesic as it may be bent along a geodesic lamination. Here we mean by $\partial C(\overline g)$ the boundary in $N(\overline g)$, so in the Fuchsian case, when $C(\overline g)$ is 2-dimensional, we mean $\partial C(\overline g)=C(\overline g)$. In the Fuchsian case of the first kind $C(\overline g)$ is an embedded closed totally geodesic surface, and $M$ is an interval bundle over this surface. In the Fuchsian case of second kind $C(\overline g)$ is a totally geodesic surface with geodesic relative boundary, and $M$ is a handlebody, so it has compressible boundary. 

%
%
%

Denote by $\mc{CH}(N)$ the space of convex cocompact hyperbolic metrics on $N$, where we consider two metrics equivalent if they are related by a self-homeomorphism of $M$ isotopic to the identity on $M$, where we use $N \cong \inter(M)$. The rest of this section is devoted to the discussion of a topology on this space. While it seems to be folklore, we did not find in the literature a reference suitable to our needs. Thereby, we devote some time to extract it from the existing literature. We will frequently abuse the notation and write $\overline g \in \mathcal{CH}(N)$ meaning that $\overline g$ is a concrete convex cocompact hyperbolic metric on $N$. Note that one can also consider metrics up to isotopy on $N$, which a priori can lead to a stricter equivalence. However, this is not the case, see Remark~\ref{mcg}, but we will not use it. 

It turns out to be easier to describe the topology on the space $\mc{CH}_h(N)$ of convex cocompact hyperbolic metrics on $N$ up to homotopy, i.e., up to isometry homotopic to the identity. Hence, first we accomplish this, and then we discuss the difference between $\mc{CH}(N)$ and $\mc{CH}_h(N)$.

We denote by $\mathcal R(\pi_1(M), G)$ the space of representations of $\pi_1(M)$ in $G$. The group $G$ acts on $\mathcal R(\pi_1(M), G)$ by conjugation, and we denote the resulting quotient by $\mathcal X(\pi_1 (M), G)$. Since $G\cong {\rm PSL}(2, \C)$ is an algebraic group over $\C$, the space $\mathcal R(\pi_1(M), G)$ can be viewed as a complex algebraic variety, and we consider it with the induced topology. We endow the space $\mathcal X(\pi_1 (M), G)$ with the quotient topology. Note that $\mathcal X(\pi_1 (M), G)$ is not even a Hausdorff space, and should not be confused with \emph{the character variety}, which is a more accurate quotient that can be constructed by means of algebraic geometry. However, we are interested only in the quotient of an invariant open smooth subset of $\mathcal R(\pi_1(M), G)$, corresponding to the holonomies of convex cocompact metrics, over which the action behaves well. We will be anyway calling the elements of $\mathcal X(\pi_1 (M), G)$ \emph{characters}.

Denote by $\mc{HM}_h(N)$ the space of complete hyperbolic metrics on $N$ up to homotopy, so $\mc{CH}_h(N) \subset \mc{HM}_h(N)$.
The character of holonomy sends injectively $\mathcal{HM}_h(N)$ to $\mathcal X(\pi_1(M), G)$. Indeed, if two hyperbolic metrics $\overline g_1$, $\overline g_2$ have the same character, then we have an orientation-preserving isometry between $N(\overline g_1)$ and $N(\overline g_2)$ inducing the identity map on the fundamental groups. Since $M$ is aspherical, $\overline g_1$ and $\overline g_2$ are homotopic. We endow $\mathcal{HM}_h(N)$ with the topology pull-backed from $\mathcal X(\pi_1(M), G)$. Note that we can endow $\mathcal{HM}_h(N)$ with a more natural topology, which comes from $C^{\infty}$-convergence on compact subsets for developing maps. Then the so-called Ehresmann-Thurston Theorem implies that these topologies are the same. (The non-trivial part is that the convergence of holonomies implies, up to homotopy, the convergence of the developing maps. It follows, e.g., from applying ~\cite[Theorem 1.7.1]{CEG} to compact totally convex subsets  exhausting $N$.)

Let $\overline g \in \mathcal{CH}_h(N)$ and $\rho_{\overline g}$ be its holonomy. It follows from Marden's Isomorphism Theorem~\cite[Section 8]{Mar} that a convex cocompact representation $\rho \in \mathcal R(\pi_1(M), G)$ is a holonomy representation of a convex cocompact metric on $N$ if and only if it is related to $\rho_{\overline g}$ by a quasi-conformal deformation. Thus, the set of holonomy representations of metrics from $\mathcal{CH}_h(N)$ can be identified with the set $\mathcal{QC}(\rho_{\overline g})$ of quasi-conformal deformations of $\rho_{\overline g}$. In particular, it is connected. Marden's Stability Theorem~\cite[Section 10]{Mar} implies also that it is an open subset of the smooth part of $\mathcal R(\pi_1(M), G)$. The set $\mathcal{QC}(\rho_{\overline g})$ stays invariant under the action of $G$ by conjugation, and $G$ acts freely and smoothly on it. Moreover,~\cite[Theorem 10.8]{Mar} implies that this action is proper, so the quotient is a complex manifold. Other approaches to the properness can be seen in~\cite[Proposition 1.1]{JM} or in~\cite[Lemma 6.24]{Wei}. 
We identify $\mathcal{CH}_h(N)$ with this quotient. We refer to the expositions in~\cite[Chapter 5]{Mar2} and~\cite[Section 7]{CM}.

Now we discuss the difference between equivalences of metrics up to homotopy and up to isotopy. We denote by $\MCG(M)$ the \emph{mapping class group} of $M$, i.e., the group of isotopy classes of orientation-preserving self-homeomorphisms of $M$. We denote by $\MCG_h(M) \subset \MCG(M)$ the subgroup of isotopy classes of orientation-preserving self-homeomorphisms that are homotopic to the identity. There is a natural homomorphism $\MCG(M) \rightarrow {\rm Out}(\pi_1(M))$, which sends a homeomorphism class to the induced outer automorphism of $\pi_1(M)$. Clearly, $\MCG_h(M)$ belongs to the kernel of this homomorphism. 

The work of Waldhausen~\cite{Wal} establishes that when the boundary of $M$ is incompressible, the group $\MCG_h(M)$ is trivial. However, this is no longer the case when the boundary is compressible. Let $\gamma \subset \partial M$ be an oriented simple closed curve that is contractible in $M$, and $D \subset M$ be an embedded 2-disk contracting $\gamma$, which exists thanks to the Dehn Lemma. Do a Dehn twist around $\gamma$. Let $D \times [-1, 1] \subset M$ be a small neighborhood of $D$ such that $D \times \{0\}=D$ and $\partial D \times [-1, 1]$ is an annulus around $\gamma$ supporting the Dehn twist. The Dehn twist naturally extends to $D \times [-1, 1]$. Extending it by identity outside of $D \times [-1, 1]$, we obtain a self-homeomorphism of $M$~called a \emph{twist} of $M$. One can see that it is homotopic to the identity, but not isotopic. McCullough and Miller showed in~\cite{MM2} that the kernel of $\MCG(M) \rightarrow {\rm Out}(\pi_1(M))$ is generated by twists. Thereby, $\MCG_h(M)$ is equal to the kernel and is generated by twists. Let $S_j$ be a connected component of $\partial M$. We denote by $\MCG_{h, j}(M)$ the subgroup of $\MCG_h(M)$ generated by twists along curves belonging to $S_j$. As compressing disks of disjoint simple closed curves in $\partial M$ can be made disjoint, $\MCG_h(M)$ is the direct product of all $\MCG_{h, j}(M)$. 

\begin{rmk}
\label{mcg}
One can consider a similarly-defined group $\MCG_h(N)$. We have a morphism $\MCG_h(M) \ra \MCG_h(N)$. One can show that this is an isomorphism. We will not rely on it, hence we do not go into the details of it.
\end{rmk}

We denote by $\mathcal{TS}(\partial M)$ the Teichm\"uller space of $\partial M$. It is the direct product of the Teichm\"uller spaces of all connected components of $\partial M$. The group $\MCG_h(M)$ acts on $\mathcal {TS}(\partial M)$ and the action splits on actions of $\MCG_{h, j}(M)$ on the respective components of $\mathcal{TS}(\partial M)$. As the action of the mapping class group of $\partial M$ on $\mathcal {TS}(\partial M)$ is propely discontinuous (see, e.g.,~\cite[Theorem 12.2]{FM}), so is the action of $\MCG_h(M)$. In~\cite[Section 3.1]{Mar3} Marden shows that this action is free. We denote by $\mathcal{TS}_h(\partial M, M)$ the quotient of $\mathcal {TS}(\partial M)$ over this action. 

In~\cite[Section 3.1]{Mar3} Marden states a version of the Ahlfors--Bers Theorem (based also on the works of Kra and Maskit), which says that if $\overline g$ is a convex cocompact hyperbolic metric on $N$, then the action of quasi-conformal deformations of $\rho_{\overline g}$ on $\partial_{\infty} \H^3$ defines a biholomorphic map from the quotient of $\mathcal{QC}(\rho_{\overline g})$ by conjugation of $G$ to $\mathcal{TS}_h(\partial M, M)$. We refer to~\cite{Mar3} and references therein. Additional expositions can be found in~\cite[Section 5.1]{Mar2} or \cite[Section 7]{CM}. Due to the discussion above, we can state this result as

\begin{thm}
\label{abh}
The map $\mathcal{CH}_h(N) \rightarrow \mathcal{TS}_h(\partial M, M)$ sending a class represented by a metric $\overline g$ to the class of its conformal structure at infinity $\partial_{\infty}N(\overline g)$ is a biholomorphic map.
\end{thm}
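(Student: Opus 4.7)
The plan is to combine the identification of $\mathcal{CH}_h(N)$ with $\mathcal{QC}(\rho_{\overline g_0})/G$ (for a fixed basepoint $\overline g_0$ with holonomy $\rho_0 := \rho_{\overline g_0}$) set up in the preceding paragraphs with Marden's restatement of the Ahlfors--Bers theorem quoted immediately above the statement. The key observation is that the conformal structure at infinity of $N(\overline g)$ is precisely the one inherited from the action of the quasiconformal conjugator of $\rho_0$ to $\rho_{\overline g}$ on the domain of discontinuity $\Omega(\rho_0) \subset \partial_\infty \H^3$, so that, through this identification, $\mathcal{AB}_h$ coincides with the map asserted by Marden to be a biholomorphism.

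More concretely, I would first identify $\partial_\infty N(\overline g_0)$ with $\Omega(\rho_0)/\rho_0(\pi_1(M))$ and use this to view $\mathcal{TS}_h(\partial M, M)$ as the space of $\rho_0$-equivariant Beltrami differentials on $\Omega(\rho_0)$ modulo conformal equivalence and the $\MCG_h(M)$-action. For a class $\overline g \in \mathcal{CH}_h(N)$, Marden's isomorphism theorem supplies a quasiconformal homeomorphism $\varphi: \partial_\infty \H^3 \to \partial_\infty \H^3$ conjugating $\rho_0$ to $\rho_{\overline g}$; its restriction to $\Omega(\rho_0)$ descends to a quasiconformal homeomorphism of $\partial M$ whose Beltrami coefficient represents $\mathcal{AB}_h(\overline g)$. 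I would then check that the resulting class in $\mathcal{TS}_h(\partial M, M)$ is independent of the auxiliary choice of $\varphi$ up to conjugation by $G$ and the $\MCG_h(M)$-action, so that the map is well-defined on homotopy classes.

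Next, I would establish bijectivity and holomorphicity separately. For surjectivity, I would invoke the measurable Riemann mapping theorem: given any Beltrami differential on $\partial M$, lift it to a $\rho_0$-equivariant Beltrami differential on $\Omega(\rho_0)$, extend by zero across the limit set $\Lambda(\rho_0)$ (which has measure zero since $\rho_0$ is convex cocompact), and solve the Beltrami equation to obtain a quasiconformal deformation whose associated representation realizes the prescribed conformal structure at infinity. For injectivity, if two deformations of $\rho_0$ induce the same conformal structure at infinity modulo $\MCG_h(M)$, they are related on $\Omega(\rho_0)$ by a conformal map, hence extend to a M\"obius transformation, and so define the same point in $\mathcal{QC}(\rho_0)/G$. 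Holomorphicity in both directions follows from the classical analytic dependence of solutions of the Beltrami equation on the Beltrami coefficient (Ahlfors--Bers), combined with the compatibility of the complex structures on $\mathcal{CH}_h(N)$ (from the Ehresmann--Thurston theorem) and on $\mathcal{QC}(\rho_0)/G$ (from Marden's stability theorem and the properness of the $G$-action).

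The hard part is not the structural argument outlined above but rather the heavy analytic input it rests on: Marden's isomorphism and stability theorems, the equivariant measurable Riemann mapping theorem, and Bers' analytic dependence theorem. Granting these classical results as cited in~\cite{Mar3, Mar2, CM}, the statement reduces to unpacking the definitions already recorded in this section.
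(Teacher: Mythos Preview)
Your proposal is correct and follows the same route as the paper: Theorem~\ref{abh} is not proved in the paper but is simply quoted from Marden~\cite{Mar3} (with further references to~\cite{Mar2, CM}), after identifying $\mathcal{CH}_h(N)$ with $\mathcal{QC}(\rho_{\overline g})/G$ in the preceding paragraphs. Your outline unpacks precisely what that citation encodes---the equivariant measurable Riemann mapping theorem for surjectivity, uniqueness of the quasiconformal conjugator up to M\"obius transformations for injectivity, and Ahlfors--Bers analytic dependence for holomorphicity---so there is nothing to add beyond noting that the paper treats this as a black-box input rather than reproving it.
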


Finally, we return to $\mathcal{CH}(N)$. We denote by $\mathcal {AB}: \mathcal{CH}(N) \rightarrow \mathcal{TS}(\partial M)$ a similar map defined as above. Clearly, it is equivariant with respect to the action of $\MCG_h(M)$. Together with Theorem~\ref{abh} it implies

\begin{thm}
\label{ab}
The map $\mathcal{AB}$ is bijective.
\end{thm}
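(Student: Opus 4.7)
The plan is a short diagram chase combining Theorem~\ref{abh} with the $\MCG_h(M)$-equivariance of $\mathcal{AB}$ and the freeness of the $\MCG_h(M)$-action on $\mathcal{TS}(\partial M)$. All of these are at our disposal: the equivariance is built into the definition of $\mathcal{AB}$ as stated just above, freeness of the action on $\mathcal{TS}(\partial M)$ is the Marden result cited from~\cite[Section 3.1]{Mar3}, and the bijectivity of $\mathcal{AB}_h$ is Theorem~\ref{abh}. These three ingredients fit into the commutative square
$$
\begin{array}{ccc}
\mathcal{CH}(N) & \xrightarrow{\;\mathcal{AB}\;} & \mathcal{TS}(\partial M) \\
\downarrow & & \downarrow \\
\mathcal{CH}_h(N) & \xrightarrow{\;\mathcal{AB}_h\;} & \mathcal{TS}_h(\partial M, M),
\end{array}
$$
whose vertical maps are the two quotients by $\MCG_h(M)$.

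For surjectivity of $\mathcal{AB}$, I would fix an arbitrary $c \in \mathcal{TS}(\partial M)$ and let $[c]_h \in \mathcal{TS}_h(\partial M, M)$ be its class under the $\MCG_h(M)$-action. By the surjectivity half of Theorem~\ref{abh} there exists $[\overline g]_h \in \mathcal{CH}_h(N)$ with $\mathcal{AB}_h([\overline g]_h) = [c]_h$. Commutativity of the square then forces $\mathcal{AB}(\overline g)$ and $c$ to lie in the same $\MCG_h(M)$-orbit, so one can pick $\phi \in \MCG_h(M)$ with $\mathcal{AB}(\overline g) = \phi \cdot c$; equivariance now yields $\mathcal{AB}(\phi^{-1}\cdot \overline g) = c$, giving the preimage of $c$.

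For injectivity, suppose $\mathcal{AB}(\overline g_1) = \mathcal{AB}(\overline g_2)$. Descending to $\mathcal{TS}_h(\partial M, M)$ and using injectivity of $\mathcal{AB}_h$ gives $[\overline g_1]_h = [\overline g_2]_h$, so $\overline g_2 = \phi \cdot \overline g_1$ in $\mathcal{CH}(N)$ for some $\phi \in \MCG_h(M)$. Applying $\mathcal{AB}$ and using equivariance, $\mathcal{AB}(\overline g_1) = \mathcal{AB}(\overline g_2) = \phi \cdot \mathcal{AB}(\overline g_1)$, i.e., $\phi$ fixes a point of $\mathcal{TS}(\partial M)$. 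Since the $\MCG_h(M)$-action on $\mathcal{TS}(\partial M)$ is free, $\phi = \mathrm{id}$, hence $\overline g_1 = \overline g_2$ in $\mathcal{CH}(N)$.

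No genuine obstacle appears in this argument: all of the analytic/geometric content has been absorbed into the quoted results, and the only subtlety is keeping the isotopy/homotopy bookkeeping consistent between the two columns of the square. The reason everything closes up is precisely that freeness of the $\MCG_h(M)$-action is known \emph{on the Teichm\"uller side}; this is the unique non-formal input that drives both directions, and it is exactly what compensates for not asserting freeness on $\mathcal{CH}(N)$ directly.
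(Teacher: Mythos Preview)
Your argument is correct and is exactly the approach the paper has in mind: the paper simply asserts that equivariance of $\mathcal{AB}$ together with Theorem~\ref{abh} yields the result, and your diagram chase is the natural unpacking of that sentence, using the freeness of the $\MCG_h(M)$-action on $\mathcal{TS}(\partial M)$ (quoted from Marden just before Theorem~\ref{abh}) as the key non-formal input for injectivity.
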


We endow $\mathcal{CH}(N)$ with the induced topology. It becomes a connected complex manifold of real dimension $-3k$, where $k=\chi(\partial M)$, and the natural projection $\mathcal{CH}(N)\rightarrow \mathcal{CH}_h(N)$ becomes a covering map.

\subsection{Polyhedral hyperbolic manifolds}
\label{polyhsec}

\begin{dfn}
We say that a hyperbolic metric $g$ on $M$ is \emph{polyhedral} if each point $p \in \partial M(g)$ has a neighborhood isometric to a convex polyhedral set in $\H^3$. It is \emph{strictly polyhedral} if, in addition, the distance between the boundary and the convex core is positive.
\end{dfn}

For $p \in \partial M$ the \emph{spherical link} of $p$ in $M(g)$ is the part of the unit sphere in $T_p N(\overline g)$ that is cut out by the directions to $M(g)$. If the spherical link of $p$ is a half-sphere, then $p$ is called \emph{regular} in $M(g)$. If it is a spherical lune, then $p$ is called a \emph{ridge point} in $M(g)$. Otherwise, it is called a \emph{vertex} of $M(g)$, and its spherical link is a convex spherical polygon. 
The closures of the connected components of the set of regular points in $M(g)$ are called \emph{faces} of $M(g)$ and constitute \emph{the face decomposition} of $M(g)$. We denote by $F(g)$ the set of faces of $M(g)$. The \emph{edges} of $M(g)$ are closures of the connected components of the set of ridge points of $M(g)$. 

\begin{lm}
\label{strpolyh}
Let $g$ be a strictly polyhedral metric on $M$. Then each edge is a segment between vertices and all faces are isometric to compact hyperbolic polygons.
\end{lm}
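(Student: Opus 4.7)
The plan is to lift to the universal cover, where a developing map embeds $(\tilde M, g)$ as a convex subset of $\H^3$ with $\pi_1(M)$-invariant, locally convex polyhedral boundary $\partial \tilde M(g)$. Strict polyhedrality supplies a positive gap $\dist(\partial \tilde M(g), \tilde C(\overline g)) > 0$, which will be the key global input ruling out unbounded lifts of edges and faces.

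\textbf{Edges.} For an edge $e$, I would take a connected component $\tilde e \subset \partial \tilde M(g)$ of its preimage; it is a connected geodesic arc in $\H^3$. Let $H_e \leq \pi_1(M)$ be its setwise stabilizer. Since $\pi_1(M)$ is torsion-free and convex cocompact (so $\rho_{\overline g}$ has no parabolics), every nontrivial $\gamma \in H_e$ is loxodromic; a loxodromic isometry preserves a geodesic of $\H^3$ only if the geodesic is its translation axis, so the axis of $\gamma$ must be $\tilde e$, whose two ideal endpoints are then fixed points of $\gamma$ and lie in $\Lambda(\overline g)$. This forces $\tilde e \subset \tilde C(\overline g) \cap \partial \tilde M(g)$, contradicting strict polyhedrality. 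Hence $H_e$ is trivial, the covering projection restricts to a homeomorphism $\tilde e \to e$, and $\tilde e$ is compact (as $e$ is closed in the compact surface $\partial M$). The two endpoints of the bounded geodesic segment $\tilde e$ lie in $\partial \tilde M(g)$ but are neither regular nor interior ridge points, hence vertices.

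\textbf{Faces.} For a face $F$, I would take a connected component $\tilde F \subset \partial \tilde M(g)$ of its preimage; it lies in a unique hyperbolic plane $P$ which is a supporting plane of $\tilde M(g)$, and $\tilde F$ is convex in $P$. Let $H_F \leq \pi_1(M)$ be the setwise stabilizer. A nontrivial $\gamma \in H_F$ is loxodromic and preserves $P$, forcing its translation axis $\ell$ to lie in $P$ (an axis transverse to $P$ would push $P$ off itself, and a perpendicular axis would give a fixed point of $\gamma$ in $P$). The two ideal endpoints of $\ell$ are fixed by $\gamma$, hence in $\Lambda(\overline g)$. For any $x \in \tilde F$, the iterates $\gamma^n x$ remain in $\tilde F$ and converge in $\overline{\H^3}$ to these two ideal endpoints, and the closed convex set $\tilde F \subset P$ must then contain the whole geodesic $\ell$. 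This yields $\ell \subset \tilde C(\overline g) \cap \partial \tilde M(g)$, again contradicting strict polyhedrality. Thus $H_F$ is trivial, $\tilde F \cong F$ is compact, and $\tilde F$ is a compact convex subset of $\H^2$. The boundary of $\tilde F$ consists of ridge points and vertices; the local polyhedral assumption guarantees that near each boundary point only finitely many edges occur, and compactness of $\partial \tilde F$ then reduces $\partial \tilde F$ to a finite union of geodesic segments joining vertices. Hence $F$ is a compact hyperbolic polygon.

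The step I expect to need the most care is the orbit-to-geodesic passage $\ell \subset \tilde F$ in the face argument. It amounts to the lemma that a closed convex subset of $\H^2$ whose closure in $\overline{\H^2}$ contains two distinct ideal points $\xi^{\pm}$ contains the entire geodesic $(\xi^-, \xi^+)$. Taking $\xi^{\pm}$ as the two fixed points of $\gamma$ and using the iterates $\gamma^{\pm n}x \in \tilde F$ to approach them, the closedness of $\tilde F$ in $P$ together with continuity of convex combinations in the closed disk model concludes the argument.
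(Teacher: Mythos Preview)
Your argument is correct and follows the same overall strategy as the paper—lift to $\tilde M(g)\subset\H^3$ and exploit the positive gap $\dist(\partial\tilde M(g),\tilde C(\overline g))>0$—but the mechanism is a little different. The paper argues more directly: if an edge were a closed geodesic or unbounded, or if a face failed to be simply connected (and hence, by convexity of its boundary, contained a simple closed geodesic), then the lift of the relevant geodesic would be an unbounded curve in $\partial\tilde M(g)$ with an ideal endpoint in $\Lambda(\overline g)$, along which the distance to $\tilde C(\overline g)$ tends to zero. You instead work through the stabilizer of a lifted edge or face and use the loxodromic classification to place an entire translation axis inside $\partial\tilde M(g)\cap\tilde C(\overline g)$. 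Your route is slightly longer—it needs the orbit-to-geodesic lemma in the face case and a short case analysis for why a loxodromic preserving a plane has its axis in that plane—while the paper's avoids stabilizers entirely. The payoff of your version is that the group-theoretic obstruction is made explicit, and the stabilizer viewpoint transfers cleanly to related situations; the paper's version is simply shorter.
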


\begin{proof}
If an edge can be extended infinitely at least in one direction or is a closed geodesic, then its lift to the universal cover ends in the limit set $\Lambda(\overline g)$. Hence, the distance to $\tilde C(\overline g)$ would tend to zero along this lift, which is a contradiction as $\partial M$ is compact and the distance to the convex core of $g$ is positive. 

By the same reasoning, every face of $\tilde M(g)$ is compact. It is also simply connected. It follows that every face of $M(g)$ is isometric to a compact hyperbolic polygon.

\end{proof}


Let $g$ be a strictly polyhedral metric on $M$. Consider $\tilde M(g) \subset \H^3$ and its Gauss dual $\tilde M(g)^* \subset \dS^3$. As $g$ is strictly polyhedral, no supporting plane to $\tilde M(g)$ passes through a point of $\Lambda(\overline g)$. Indeed, this would imply the existence on $\partial \tilde M(g)$ of a curve, infinite in one direction, such that the distance to $\tilde C(\overline g)$ would tend to zero along it. Hence, each boundary component of $\partial \tilde M(g)^*$ is space-like, so the induced metric is a spherical cone-metric. Another consequence of strict polyhedrality is that $\rho_{\overline g}$ acts on $\partial \tilde M(g)^*$ freely. Indeed, a point in $\dS^3$ is fixed by $\rho_{\overline g}(\gamma)$ only if it is dual to a plane in $\H^3$ containing the axis of the isometry $\rho_{\overline g}(\gamma)$. But all such axes are contained in $\tilde C(\overline g)$. Therefore, by taking the quotient of $\partial \tilde M(g)^*$, we obtain a spherical cone-metric $d$ on $\partial M$, determined up to isotopy. We say that $d$ is the \emph{dual metric} induced by $g$ on $\partial M$.

There is another way to construct $d$. For each vertex $v$ of $M(g)$ we consider its spherical link, which is a convex spherical polygon, and its spherical polar dual is another convex spherical polygon. By gluing together these dual polygons with respect to the face decomposition of $M(g)$, we obtain $d$. It is easy to see from this description that $d$ is concave.

The main result of the work~\cite{CD} implies that if $C \subset \H^3$ is a convex polyhedral set, then any two points in $\partial C^*$ at distance less than $\pi$ can be connected by a unique minimizing intrinsic geodesic segment in $\partial C^*$. Moreover, a refined version~\cite[Theorem 4.1.1]{CD} also states that if $\gamma \subset \partial C^*$ is a closed intrinsic geodesic of length $2\pi$, then it encloses a cusp point of $C^*$, i.e., a point of tangency with $\partial_{\infty} \dS^3$. This particularly implies

\begin{lm}
Let $g$ be a strictly polyhedral metric on $M$. Then the dual metric $d$ on $\partial M$ is large for $M$.
\end{lm}



\subsection{Topology on the space of strictly polyhedral metrics}

Let $V \subset \pt M$ be a finite set, $g$ be a strictly polyhedral metric on $\partial M$ and $f: V \rightarrow F(g)$ be a bijection such that the points from the $j$-th component $S_j$ of $\pt M$ mark faces of this component. We say that the pair $(g, f)$ is \emph{a strictly polyhedral metric on $M$ with faces marked by $V$}. We denote by $\mathcal P(M, V)$ the space of strictly polyhedral metrics $g$ on $M$ with faces marked by $V$ up to isometry preserving the marking and isotopic to the identity. We will frequently abuse the notation and write $g \in \mathcal P(M, V)$ meaning that some $f$ is also chosen and with the class of $g$ it is in $\mathcal P(M, V)$. Our current goal is to endow $\mathcal P(M, V)$ with a natural topology. We will achieve this by embedding it into a larger space.

For $\overline g \in \mathcal{CH}(N)$ consider an oriented plane in $\tilde N(\overline g)=\H^3$. It gives rise to an immersed oriented plane in $N(\overline g)$. We say that an oriented plane is \emph{positive} if it is at positive distance from $\tilde C(\ol g)$ and is oriented outwards $\tilde C(\ol g)$. We say the same about the immersed image of the plane. We denote by $\mathcal{CH}(N, V)$ the space of convex cocompact hyperbolic metrics on $N$ equipped with a set of immersed positive planes marked by $V$, considered up to isometries sending marked planes to marked planes, preserving the marking and isotopic to the identity. When we say that the planes are marked by $V$, we mean that the planes in the geometric end of $N(\ol g)$ corresponding to $S_j$ are marked by points from $S_j$. The elements of $\mathcal {CH}(N, V)$ are represented by pairs $(\overline g, f)$, where $\overline g$ is a convex cocompact hyperbolic metric on $N$ and $f$ is a map from $V$ onto a set of immersed positive planes. We will write $f_v$ instead of $f(v)$. We are going now to endow the space $\mc{CH}(N, V)$ with a natural topology.

First, we have to define similarly a space $\mc{CH}_h(N, V)$ with isotopy replaced by homotopy. As with $\mc{CH}_h(N)$, it is easier to endow $\mc{CH}_h(N, V)$ with a topology. We denote by $(\dS^3)^{V}$ the space of maps $\tilde f: V \ra \dS^3$. We can interpret $(\dS^3)^{V}$ as the configuration space of oriented planes in $\H^3$ marked by $V$. We consider the direct product
$$\mc R(\pi_1(M), G)\times (\dS^3)^{V}$$
with a left action of $G$ on it, by conjugation on the first factor and by isometries on the second factor. There is also a left action of $\pi_1(M)$ given by $\gamma(\rho, \tilde f)=(\rho, \rho(\gamma)\tilde f)$, where $(\rho, \tilde f) \in \mc R(\pi_1(M), G)\times (\dS^3)^{V}$ and $\gamma \in \pi_1(M)$. These two actions commute, and the standard approach via developing maps allows us to inject $\mc{CH}_h(N, V)$ in the quotient. Since the action of $G$ on the holonomies of convex cocompact metrics is free and proper (see Section~\ref{cocomp}), and for every such holonomy $\rho$, its action on the space of positive planes is free and properly discontinuous, this endows $\mc{CH}_h(N, V)$ with the topology of a smooth manifold of dimension $3(n-k)$ where $k$ is the Euler characteristic of $\partial M$.

For $\ol g \in \mc{CH}_h(N, V)$ the fiber of the forgetful map $\mc{CH}_h(N, V) \ra \mc{CH}_h(N)$ can be naturally identified with the space of positive planes in $N(\ol g)$ marked by $V$. Recall that the action of ${\rm MCG}_h(M)$ on $\mc{CH}(N)$ is free, properly discontinuous and determines a covering map $\mc{CH}(N) \ra \mc{CH}_h(N)$. Consider the commutative diagram
\begin{center}
\begin{tikzcd}
\mathcal{CH}(N, V) \arrow[r] \arrow[d]
& \mathcal{CH}_h(N, V) \arrow[d] \\
\mathcal{CH}(N) \arrow[r]
& \mathcal{CH}_h(N)
\end{tikzcd}
\end{center}
Using this diagram, we pull-back the topology on $\mathcal{CH}_h(N, V)$ to a topology on $\mc{CH}(N, V)$ making it also a smooth manifold of dimension $3(n-k)$. 
In Section~\ref{bundlesec} we will show that the vertical forgetful maps are fiber bundles.


For $g \in \mathcal P(M, V)$ every face gives rise to an immersed plane, which we consider oriented outwards $M(g)$. As shown in Section~\ref{polyhsec}, the lift of any such plane does not contain points of $\Lambda(\ol g)$ at infinity, hence, these planes are at positive distance from $C(\ol g)$. This allows us to consider $\mathcal P(M, V)$ as a subset of $\mathcal{CH}(N, V)$. Clearly, it is open. We endow $\mathcal P(M, V)$ with the induced topology. 

\begin{lm}
\label{nonempty}
For every admissible $M$ if $V$ has sufficiently many points in every component of $\pt M$, then the space $\mc P(M, V)$ is non-empty.
\end{lm}

\begin{proof}
We show even more, that for every $\ol g \in \mc{CH}(N)$ one can choose sufficiently large $V$ so that there exists $g \in \mathcal P(M, V)$ such that $\ol g$ is the underlying metric on $N$. Indeed, for a component $S_j$ of $\pt M$ let $\nu_j: \pi_1(S_j) \ra \pi_1(M)$ be the push-forward homomorphism induced by the inclusion. Consider $\tilde N(\ol g) \cong \H^3$, there exists a component $\Omega_j$ of $\pt_{\infty} \H^3 \backslash \Lambda(\ol g)$ invariant under $\rho_{\ol g}|_{\im(\nu_j)}$. There exists a compact fundamental domain in $\Omega_j$ for the latter action. As $\Omega_j$ is open in $\pt_\infty \H^3$, there exists a finite cover of the chosen fundamental domain by disks in $\pt_\infty \H^3$ contained in $\Omega_j$. Every such disk determines a plane in $\H^3$ that is at positive distance from $\tilde C(\ol g)$. We do this for every component of $\pt M$ and then consider the union of $\rho_{\ol g}$-orbits of all chosen planes. Take the intersection of all the half-spaces containing $\tilde C(\ol g)$ bounded by these planes. It is evident that the $\rho_{\ol g}$-quotient of this set is homeomorphic to $M$ and is a strictly polyhedral manifold.
\end{proof}

For every pair $(\ol g, f) \in \mc{CH}(N, V)$ we consider the intersection $K$ of all the closed negative immersed half-spaces in $N(\ol g)$ bounded by the immersed marked planes. We denote by $\ol{\mc P}(M, V)$ the subset of $\mc{CH}(N, V)$ such that (1) $K$ is homeomorphic to $M$; (2) $f$ is injective; (3) each marked plane is supporting to $K$. We remark that here we use a non-intuitive notation as $\ol{\mc P}(M, V)$ is not a topological closure of $\mc P(M, V)$: the latter also contains configurations with non-injective markings and with cusps. Our motivation for this notation is that an element of $\ol P(M, V)$ determines a dual metric in $\ol{\mc D}_c(\pt M, V)$. We denote by $g$ the metric on $M$ induced by a homeomorphism between $M$ and $K$, where a homeomorphism is chosen so that when it is composed with our fixed homeomorphism $N \cong \inter(M)$, we get the identity on the fundamental groups. The metric $g$ is determined up to isotopy. We will represent the elements of $\ol{\mc P}(M, V)$ by pairs $(g, f)$. For every such pair there is a subset $V(g) \subseteq V$ where $v \in V(g)$ if and only if the intersection of the plane $f_v$ with $\pt M(g)$ has non-empty relative interior, i.e., corresponds to a face. A pair $(g, f)$ also determines a class in $\mc P(M, V(g))$.

For $g \in \mathcal P(M, V)$ the dual metric $d$ is defined on $\pt M$ up to isotopy, so it determines an element of $\mathcal D_M(\partial M, V)$. For a pair $(g, f)$ representing an element of $\ol{\mc P}(M, V)$ the dual points to the lifts of the planes $f_v$ belong to the boundary of the dual set $\tilde M(g)^*$. Hence, in this case we may consider the class of the dual metric in $\ol{\mc D}_M(\pt M, V)$. We denote the obtained map by $$\mathcal I_V: \ol{\mathcal P}(M, V) \rightarrow \ol{\mathcal D}_M(\partial M, V).$$
It is not hard to see that it is continuous. We now show that it is $C^1$ over $\mc P(M, V)$.

First, if $g \in \mathcal P(M, V)$ and $e$ is an edge of $M(g)$, then $e$ remains to be an edge in some neighborhood of $g$ in $\mathcal P(M, V)$. From our definition of the topology on $\mathcal P(M, V)$, when we vary $g$ smoothly, the planes containing faces of $\tilde M(g)$ vary smoothly. This implies

\begin{lm}
\label{diffweak}
The dihedral angle of $e$ is a smooth function in a neighborhood of $g$ in $\mathcal P(M, V)$.
\end{lm}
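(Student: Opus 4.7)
The plan is to express the dihedral angle of $e$ as a composition of two smooth operations: the smooth assignment $g' \mapsto (\tilde z_{v_1}(g'), \tilde z_{v_2}(g'))$ of the two lifted oriented planes meeting along a chosen lift of $e$, and the smooth dependence of the angle between two transverse oriented planes in $\H^3$ on their duals in $\dS^3$.

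More precisely, since $e$ persists as an edge in a neighborhood $U$ of $g$ in $\mc P(M, V)$, it is adjacent to exactly two faces, marked by some fixed $v_1, v_2 \in V$. First, I would choose a developing map for $g$ and a lift $\tilde e \subset \tilde M(g) \subset \H^3$; there are then unique lifts $\tilde z_{v_i}$ of the immersed oriented planes $z_{v_i}$ whose intersection contains $\tilde e$. By the construction of the smooth structure on $\mc{CH}(N, V)$ and the local triviality of the bundle $\mc{CH}(N, V) \ra \mc{CH}(N)$ over a chart around $g$, one can choose representatives $(\rho_{g'}, \tilde f_{g'}) \in \mc R(\pi_1(M), G) \times (\dS^3)^{*V}$ depending smoothly on $g' \in U$, together with a smoothly varying developing map, so that $\tilde f_{g'}(v_i) \in \dS^3$ is the dual point of the lifted oriented plane $\tilde z_{v_i}(g')$, and $\tilde z_{v_i}(g)$ agrees with the original $\tilde z_{v_i}$.

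The dihedral angle $\theta(g')$ of $e$ in $M(g')$ is the interior angle between these two faces along $\tilde e(g') := \tilde z_{v_1}(g') \cap \tilde z_{v_2}(g')$. Since the dual point of an oriented plane in $\H^3$ is its outward unit normal regarded as a unit spacelike vector in the Minkowski $4$-space, the angle between the two outward normals equals $\pi - \theta(g')$, and hence
\[
\theta(g') \;=\; \pi - \arccos\bigl\langle \tilde f_{g'}(v_1),\ \tilde f_{g'}(v_2) \bigr\rangle_M.
\]
At $g' = g$ the two planes meet transversally along the compact segment $\tilde e$, so the argument of $\arccos$ lies in the open interval $(-1, 1)$, and the same holds on a sufficiently small neighborhood. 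Composing $\arccos$ with the smooth function $g' \mapsto \langle \tilde f_{g'}(v_1), \tilde f_{g'}(v_2) \rangle_M$ then yields the claimed smoothness of $\theta$ on $U$.

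I do not expect a real obstacle here; the only point that requires a short verification is the smooth selection of lifts $\tilde z_{v_i}(g')$ whose intersection still contains a lift of $e$. This reduces to the persistence of the combinatorial face structure of $M(g')$ on $U$, together with the fact that, once a continuous lift of the developing map and a basepoint in the universal cover have been fixed, the lifted planes depend continuously (hence smoothly, by the analytic structure on $\mc R(\pi_1(M), G) \times (\dS^3)^{*V}$) on $g'$.
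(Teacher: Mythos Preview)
Your proposal is correct and is essentially the paper's argument written out in full detail. The paper's own proof is the single sentence preceding the lemma: ``From our definition of the topology on $\mathcal P(M, V)$, when we vary $g$ smoothly, the planes containing faces of $\tilde M(g)$ vary smoothly,'' and your explicit formula $\theta(g')=\pi-\arccos\langle \tilde f_{g'}(v_1),\tilde f_{g'}(v_2)\rangle_M$ is precisely the unpacking of why smooth variation of the dual points in $\dS^3$ yields smooth variation of the dihedral angle.
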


Now we prove

\begin{lm}
\label{differ}
The map $\mathcal I_V$ is $C^1$ over $\mc P(M, V)$.
\end{lm}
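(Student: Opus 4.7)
The plan is to set up explicit local coordinates on both sides so that $\mc I_V$ is computed in terms of analytically-varying dual points in $\dS^3$, and then to verify smoothness directly. Fix $g \in \mc P(M, V)$ and put $d^* := \mc I_V(g)$. By Lemma~\ref{triang} there is a geodesic triangulation $\mc T$ of $(\pt M, V, d^*)$, providing a chart $\mc D^*(\pt M, \mc T)$ around $d^*$ with coordinates the edge-length vector $(\ell_e)_{e \in E(\mc T)}$. Combining continuity of $\mc I_V$ with Lemma~\ref{fintriang}, we obtain a neighborhood $U \subset \mc P(M, V)$ of $g$ on which every $\mc I_V(g')$ still realizes $\mc T$; it therefore suffices to show that each function $g' \mapsto \ell_e(g')$ is $C^1$ on $U$.

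Fix an edge $e \in E(\mc T)$ with endpoints $v, v' \in V$, and let $\gamma_e \in \pi_1(M)$ encode a lift of $e$ to $\tilde{\pt M}$. On the dual side, the geodesic realizing $e$ lifts to an arc on $\pt \tilde M(g')^* \subset \dS^3$ from the dual point $p_v(g')$ of the marked plane at $v$ to $\rho_{\ol{g'}}(\gamma_e) \cdot p_{v'}(g')$. The local coordinates on $\mc{CH}(N, V)$ introduced before Lemma~\ref{connect}---the holonomy character together with positions of marked planes---are real-analytic, so both endpoints depend analytically on $g'$. The length $\ell_e(g')$ equals the intrinsic spherical distance on $\pt \tilde M(g')^*$ between these two points along the geodesic in the given lifted homotopy class.

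Finally, I would argue that this intrinsic length is smooth in $g'$. The lifted geodesic crosses a finite sequence of faces of $\pt \tilde M(g')^*$, each being the polar dual of the spherical link of a vertex $w$ of $\tilde M(g')$ and thus determined by the analytically-varying dual points of the planes incident to $w$; within each face the geodesic is a spherical arc whose length depends smoothly on its entry and exit points on the face's boundary. Concavity of $d^*$ keeps the geodesic away from all cone points except its endpoints, so after shrinking $U$ the combinatorial type of the lifted geodesic stays constant, whence $\ell_e$ is a finite sum of smooth terms. The main subtlety is that the \emph{extrinsic} combinatorial type of $\pt \tilde M(g')^*$ can jump in any neighborhood of $g$---say four dual points become coplanar in $\dS^3$, causing a quadrilateral face to split into two triangles along an edge with $\theta_e = \pi$---but at such a transition the new edge carries dual length $\pi - \theta_e = 0$ and is simply a spherical geodesic inside a single flat spherical polygon, so the \emph{intrinsic} spherical cone-metric on $\pt \tilde M(g')^*$ remains smooth across the transition and the length formula is unaffected.
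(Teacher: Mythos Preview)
Your setup is sound up to the last paragraph, but the resolution of the ``main subtlety'' is where the argument breaks down. You first assert that after shrinking $U$ the combinatorial type of the lifted geodesic stays constant, and then immediately concede that it does not: near a metric $g$ with a vertex of degree $>3$, the dual face is a polygon $Q$ with $>3$ vertices, and in any neighborhood of $g$ this polygon splits into triangles along \emph{different} diagonals depending on the direction of perturbation. If $e$ is a diagonal of $Q$, then on one stratum $\ell_e$ is simply the de Sitter distance between the two dual points, while on the adjacent stratum $\ell_e$ is the intrinsic distance on a surface bent along the \emph{other} diagonal, computed by developing the two triangles into $\S^2$ and measuring there. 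These are two genuinely different smooth functions of the dual vertices.

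Your claim that ``the length formula is unaffected'' because the new edge has dihedral angle $\pi$ at the transition only says that the two functions \emph{agree} on the common boundary of the strata, i.e.\ that $\ell_e$ is continuous. It does not show that their differentials agree, which is exactly what $C^1$ means here. This matching of first derivatives is the substantive step; in the paper it is carried out by introducing horizontal/vertical coordinates for the vertices of $Q$ in $\dS^3$ and computing, via the de Sitter cosine law $\cos b = \cos c \cdot \cosh a$, that all vertical partial derivatives of every branch $\ell_e^{(\mc T_i)}$ vanish at the flat configuration, while the horizontal partials all coincide with those of the ambient de Sitter distance. Without some computation of this kind, your argument does not close.
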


\begin{proof}
Let $g \in \mathcal P(M, V)$ and $d$ be the respective dual metric. The boundary of $\tilde M(g)^*$ is decomposed into faces, which are compact convex spherical polygons. It projects to a decomposition of $(\partial M, V)$ into faces. Let $\mathcal T$ be any triangulation of $(\partial M, V)$ refining this decomposition. The metric $d$ is $\mc T$-triangulable. We need to show that the edge-lengths of $\mathcal T$ are $C^1$-functions of $g$. Due to Lemma~\ref{diffweak}, it remains only to check what happens when there are non-triangular faces in the face decomposition of $\tilde M(g)^*$, or, equivalently, when there are non-simple vertices of $\tilde M(g)$. 


Let $Q \subset \dS^3$ be a convex geodesic space-like polygon with more than three vertices representing a face of $\tilde M(g)$, $V_Q$ be the set of its vertices, $\tilde e$ be its interior edge with endpoints $v_1$ and $v_2$ representing an edge $e$ of $\mc T$. We denote by $s$ the de Sitter distance between $v_1$ and $v_2$.
When we slightly vary the positions of $V_Q$ and take the convex hull, we may obtain a 3-dimensional polytope, whose boundary is naturally divided into two parts, both of which can be considered as variations of $Q$. Let us restrict the attention to one of the parts, we call it \emph{upper}. If the deformation is small enough, this polytope has only space-like faces, and the total angles at the vertices of the upper part in the intrinsic metric stay less than $\pi$. Let $X_Q \subset (\dS^3)^{V_Q}$ be a small enough neighborhood of the initial position of $V_Q$, for which this holds. Then we can identify a deformation of $\tilde e$ in the upper part, which is a broken line if $\tilde e$ is not an edge of the obtained polytope. Consider the length $\tilde l$ of $\tilde e$ as a function over $X_Q$. There exists a small neighborhood $Y$ of $g$ in $\mc P(M, V)$, over which the length $l$ of $e$ can be defined, and a smooth map $q: Y \ra X_Q$ such that $l=\tilde l \circ q$. Thus, it is enough to show that $\tilde l$ is a $C^1$-function.

Now let $\mathcal T_1, \ldots, \mathcal T_r$ be all possible triangulations of $Q$. Fix some $\mathcal T_i$ and consider deformations of $Q$ triangulated with $\mathcal T_i$ under small variations of $V_Q$. Here the obtained polyhedral surface may become non-convex. We assume that $X_Q$ is small enough that for all $\mathcal T_i$ all faces of the deformed surface are space-like and all angles of the vertices in the intrinsic metric are less than $\pi$. Let $l_i$ be the length of $\tilde e$ under such deformations. Clearly, it is a smooth function in $X_Q$ as all the edge-lengths of $\mathcal T_i$ are smooth functions. For each $v \in V_Q$ we choose coordinates such that two of them, called \emph{horizontal}, are the coordinates of the projection of $v$ to the plane containing $Q$ in the initial moment, and the third, called \emph{vertical}, is the oriented length of a perpendicular from $v$ to this plane. Note that since the plane is space-like, the perpendicular is time-like, but here we consider its length as a real number (with the sign depending on the position of $v$ with respect to the plane). 

We claim that all vertical partial derivatives of $l_i$ at the initial moment are zero. Indeed, let $ABC$ be a de Sitter triangle with $BC$ time-like, $AB$ and $AC$ space-like and $AB$ orthogonal to $BC$. Denote by $a$, $b$, $c$ the respective side-lengths (again, we treat the length of $BC$ as a real number). Then we have
$$\cos(b)=\cos(c)\cosh(a).$$
Thus, for fixed $c \neq 0$, $\partial b/\partial a=0$ at $a=0$. Thus, the vertical partial derivatives of all edge-lengths of $\mathcal T_i$ are zero. In turn, this shows that the vertical partial derivatives of $l_i$ are zero too. Now we remark that all horizontal partial derivatives of $l_i$ in the initial moment coincide with the horizontal partial derivatives of $s$, the de Sitter distance between the endpoints of $\tilde e$.

We return to the convex hull of $V_Q$. The domain $X_Q$ admits a piecewise-smooth decomposition into cells $E_1, \ldots, E_r$ corresponding to the face triangulations of the upper part of $\conv(V_Q)$. The function $\tilde l$ coincides with $l_i$ over $E_i$. The argument above shows that in the initial moment the differential of each $l_i$ coincides with the differential of $s$. Thus, $\tilde l$ is differentiable in the initial moment. Moreover, it follows that at any point of $X_Q$ belonging to several $E_i$, the differentials of the respective $l_i$ coincide. This implies that $\tilde l$ is differentiable over $X_Q$. Since all partial derivatives of $l_i$ are continuous over $E_i$ and coincide with the partial derivatives of $E_j$ at the common points of $E_i$ and $E_j$, we see that $\tilde l$ is $C^1$. This finishes the proof.
\end{proof}

We can refine now Theorem~\ref{mt} to a more appropriate form:

\begin{thm}
\label{mtr}
For every admissible $M$ and every $V \subset \pt M$ such that $\mc D_M(\pt M, V)$ is non-empty, the space $\mc P(M, V)$ is non-empty and the restriction of the map $\mathcal I_V$ to $\mc P(M, V)$ is a $C^1$-diffeomorphism onto $\mc D_M(\pt M, V)$.
\end{thm}

In the next section we will show that $d\mathcal I_V$ is non-degenerate over $\mc P(M, V)$. Next, in Section~\ref{psec} we will show that $\mathcal I_V$ is proper. We finish the proof of Theorem~\ref{mtr} in Section~\ref{fsec}.

\section{Infinitesimal rigidity}
\label{irsec}

The goal of this section is to prove 

\begin{lm}
\label{infrig}
If $x \in T_g\mathcal P(M, V)$ is such that $d\mathcal I_V(x)=0$, then $x=0$.
\end{lm}

Take two copies of $M$ and identify the corresponding boundary points reversing the orientation. We obtain a closed 3-manifold $M^D$ called \emph{the double of $M$}. A strictly polyhedral hyperbolic metric $g$ on $M$ produces a metric $g^D$ on $M^D$, which is a hyperbolic cone-metric with the singular locus coming from the edges of $M(g)$ and with all cone-angles smaller than $2\pi$. We refer to~\cite{HK}, \cite{Wei} or \cite{MM3} for introductions to hyperbolic cone-3-manifolds. To uniformize the notation, we will write $M^D(g^D)$ for the hyperbolic cone-3-manifold $(M^D, g^D)$. We denote by $\Sigma(g^D)$ the singular locus of $g^D$ and denote by $L(g^D):=M^D(g^D) \backslash \Sigma(g^D)$, the smooth part of $M^D(g^D)$ equipped with the induced metric. 

We claim that if $g_0$, $g_1$ belong to the same connected component of $\mathcal P(M, V)$, then $L(g^D_0)$, $L(g^D_1)$ are isotopic as subsets of $M^D$.
Indeed, for every $g \in \mc P(M, V)$ that has a vertex of degree $>3$ and for every four marked planes in $\tilde N(\ol g)$ intersecting in the same point, the subset of $\mc P(M, V)$, consisting of $g' \in \mc P(M, V)$ such that these planes continue to intersect in the same point in $\tilde N(\ol g')$, is a smooth submanifold of codimension 1 in $\mc P(M, V)$ around $g$.
(Furthermore, this submanifold is analytic for a natural analytic structure on $\mc P(M, V)$.) Locally in $\mc P(M, V)$ there are finitely many such submanifolds. Therefore, for any two $g_0, g_1$ belonging to the same component of $\mc P(M, V)$, we can connect them by a smooth path $g_t$ transversal to all such submanifolds. Along the path some vertices of $\pt M(g_t)$ can merge or split. In both cases one can see that these changes do not affect the isotopy type of $L(g^D_t)$ in $M^D$.

We need now some preliminaries on the deformation spaces of hyperbolic cone-3-manifolds similar to our treatment of convex cocompact hyperbolic metrics in Section~\ref{cocomp}. Fix a connected component $\mc P_0$ of $\mathcal P(M, V)$. Let $L$ be a smooth open manifold diffeomorphic to $L(g^D)$ for $g \in \mc P_0$. Note that $L$ is diffeomorphic to the interior of a compact manifold with boundary, obtained by deleting from $M^D(g^D)$ a small regular neighborhood of the singular set. We denote by $\mathcal H(L)$ the space of hyperbolic metrics on $L$ up to equivalence, where the equivalence is generated by isotopies and thickenings. Here the metrics are not necessarily complete. If a hyperbolic metric $g_1$ on $L$ extends to a hyperbolic metric $g_2$ on $L \cup (\partial L \times [0, \e))$ for $\e>0$, then $g_2$, considered as a metric on $L$, is called \emph{a thickening} of $g_1$. We refer to~\cite[Section 1.6]{CEG} or to~\cite[ Section 5.3]{CHK} for more details on this equivalence. We endow the space $\mc H(L)$ with the topology of convergence of the developing maps on compact subsets of $\tilde L$. We denote by $\mathcal C(M^D) \subset \mathcal H(L)$ the subspace of equivalence classes that can be represented by metrics, whose metric completion is a hyperbolic cone-manifold homeomorphic to $M^D$. 

Recall that the group $G$ of the orientation-preserving isometries of $\H^3$ is isomorphic to ${\rm PSL}(2, \C)$. It is convenient now to consider its universal cover $\tilde G\cong {\rm SL}(2, \C)$. As usual, a holonomy map $\pi_1(L) \rightarrow G$, defined up to conjugation, is assigned to every hyperbolic metric from $\mathcal H(L)$. It can be lifted to a holonomy map $\pi_1(L) \rightarrow \tilde G$ by a result of Culler~\cite{Cul}. Again, we consider the space of representations $\mathcal R(\pi_1(L), \tilde G)$ and its quotient $\mathcal X(\pi_1(L), \tilde G)$ by conjugation of $\tilde G$. The characters of holonomies determine a map $\mathcal H(L) \rightarrow \mathcal X(\pi_1(L), \tilde G)$. A version of the Ehresmann-Thurston theorem~\cite[Theorem 1.7.1]{CEG} (see also~\cite[Section 5.3]{CHK}) shows that this map is a local homeomorphism. The works of Montcouquiol and Weiss~\cite{Mon, Wei2} show that around the characters of cone-manifolds, $\mathcal X(\pi_1(L), \tilde G)$ locally has the topology of a complex manifold of complex dimension $3(n-k)$ (recall that $k$ is the Euler characteristic of $\partial M$). We endow $\mc H(L)$ with the pull-back of this smooth structure.

The map $g \mapsto g^D$ determines a map $D: \mc P_0 \rightarrow \mathcal C(M^D) \subset \mathcal H(L)$. We can show

\begin{lm}
\label{double}
The map $D$ is a smooth immersion.
\end{lm}

\begin{proof}
Let $g \in \mathcal P_0$, fix a natural embedding $\iota:\inter(M) \hookrightarrow L$, and let $p \in \tilde L$ be a point projecting to $\iota(\inter(M))$, $(e_1, e_2, e_3 ) \in T_{p} \tilde L$ be a basis in its tangent space. Consider $\gamma \in \pi_1(L)$. From the description of topology of $\mathcal P(M, V)$ it follows that when $g$ varies smoothly, $\gamma(p)$ and $\gamma_*(e_i)$ vary smoothly in $\tilde L$ for all $i=1,2,3$. Fix a developing map ${\rm dev}: \tilde L(g^D) \rightarrow \H^3$. Consider a variation of $g$ and a respective variation of ${\rm dev}$, we can always do it so that  the metric on $T_p \tilde L$ does not change, and ${\rm dev}$ is constant on $p$ and $(e_1, e_2, e_3)$. Then ${\rm dev}(\gamma (p))$ and all ${\rm dev}_*(\gamma_* (e_i))$ vary smoothly. Every isometry of $\H^3$ is completely determined by its action on a point of $\H^3$ and on a basis in the tangent space of this point. Hence, $\rho(\gamma)$ varies smoothly, where $\rho$ is the respective holonomy representation of $\pi_1(L)$ in $G$. This shows that $D$ is a smooth map.

Now suppose that there is $x \in T_g \mathcal P(M, V)$ such that $dD(x)=0$. Denote by $A$ the closure of $\iota(\inter(M))$ in $L$. The inclusion $A \hookrightarrow L$ is a retract. Indeed, the complement $L \backslash A$ is naturally homeomorphic to $\inter(M)$. We send every point of $L \backslash A$ to the corresponding point of $\iota(\inter(M))\subset A$, this produces a retraction map. Hence, it induces an injection of fundamental groups $\pi_1(M)\cong\pi_1(A) \hookrightarrow \pi_1(L)$. Thus, $x$ induces zero variation at the first order on the holonomy representation $\rho_{\overline g}$ of $\pi_1 (M)$, and, consequently, induces zero variation at the first order on $\overline g$. 

It follows that $x$ is vertical with respect to the forgetful submersion $\mathcal {CH}(N, V) \rightarrow \mathcal {CH}(N)$. Thus, $x$ can be represented by a Killing vector field $X_v$ on each immersed marked plane $f_v$ in $N(\overline g)$. Let $\tilde f_v \subset \H^3$ be a lift of $f_v$ to $\tilde N(\overline g)\cong\H^3$. The vector field $X_v$ lifts to a vector field $\tilde X_{\tilde f_v}$, which is the restriction of a global Killing field of $\H^3$ to the plane $\tilde f_v$. As $dD(x)=0$, $x$ induces zero first-order variation on all lengths and dihedral angles of $M(g)$, including all edges of zero length at vertices of valence greater than three. First, it implies that for every vertex $u$ of $\tilde M(g)$ the restriction to $u$ of each vector field $\tilde X_{\tilde f_v}$ coincide. Indeed, in the dual picture in $\dS^3$ we have a face dual to $u$ and a tangent vector at every vertex such that these vectors preserve this face infinitesimally. This means that they are restrictions of a global Killing field of $\dS^3$  to this face. In the non-dual picture this exactly means that the vector fields coincide on vertices, and thereby, on the edges too. Since all the dihedral angles are infinitesimally preserved, the obtained vector field on every component of $\pt\tilde M(g)$ is the restriction of a global Killing field on $\H^3$. On the other hand, it should be invariant with respect to the restriction of $\rho_{\ol g}$ to the subgroup preserving the given component. The only such option is that this Killing field is zero. Hence, all $X_v$ are zero, which implies that $x$ is zero.
\end{proof}

Now we quickly recall the basics of the infinitesimal theory of the character space $\mathcal X(\pi_1(L), \tilde G)$ via the bundle of infinitesimal isometries on $L$. This goes back to the works of Weil~\cite{Weil} and Matsushima--Murakami~\cite{MM} notably expounded in the book~\cite{Rag} of Raghunathan. The applications of this theory to cone-manifolds are due to Hodgson--Kerckhoff, Montcouquiol and Weiss~\cite{HK, Wei, Wei2, MW}. We describe the theory quite briefly, and mostly refer to the above-mentioned fundamental works.

Until the end of this section we fix some $g \in \mc P(M, V)$ and we will write just $M$, $M^D$, $L$, $\Sigma$ instead of $M(g)$, $M^D(g^D)$ and so on. Fix a developing map from $\tilde L$ to $\H^3$, let $\rho \in \mathcal R(\pi_1(L), \tilde G)$ be a lift of its holonomy and $\chi \in \mathcal X(\pi_1(L), \tilde G)$ be its character. As mentioned above, $\mathcal X(\pi_1(L), \tilde G)$ is a complex manifold around $\chi$. The work of Weil~\cite{Weil} establishes the identification
$$T_{\chi}\mathcal X(\pi_1(L), \tilde G)\cong H^1(\pi_1(L), \Ad\circ \rho),$$
where $H^1(\pi_1(L), \Ad\circ \rho)$ is the first cohomology group of $\pi_1(L)$ with respect to the representation $\Ad \circ \rho$ on the Lie algebra $\mathfrak g=\mf{sl}(2, \C)$ of $\tilde G$, and $\Ad$ is the adjoint representation of $\tilde G$ on $\mathfrak g$. See the details in~\cite[Chapter VI]{Rag},~\cite[Section 4]{HK},~\cite[Section 6.2]{Wei}.

Let $\tilde E \rightarrow \tilde L$ be the trivial bundle, where $\tilde E$ is identified with $\tilde L \times \mathfrak g$ and $\tilde L$ is endowed with the metric $g^D$. The trivialization furnishes the bundle with a flat connection. It is also endowed with a natural, yet non-parallel metric, see, e.g.,~\cite[Section 1]{HK}. The group $\pi_1(M)$ acts on $\tilde E$ via the representation $\Ad \circ \rho$. By taking the quotient, we obtain a flat bundle $E \rightarrow L$ of the infinitesimal isometries of $L$. The flat connection induces an exterior derivative $d^E$ on the complex $\Omega^\bullet(L, E)$ of $E$-valued differential forms on $L$. Consider the respective cohomology complex $H^\bullet(L, E)$. An analogue of the de Rham Theorem in this setting establishes that
$$H^1(\pi_1(L), \Ad\circ \rho) \cong H^1(L, E).$$
The details can be found in~\cite[Chapter VII]{Rag},~\cite[Section 1]{HK},~\cite[Section 6]{Wei}. We denote by $H^1_{L^2}(L, E)$ the first $L^2$-cohomologies, i.e., the cohomologies of the  subcomplex of $L^2$-bounded differential forms with $L^2$-bounded differential.

For $\gamma \in \pi_1(L)$ denote by $\tr_\gamma$ the trace of $\rho(\gamma)$.  As trace is invariant with respect to conjugation, it is defined for $\chi$ and is defined for free homotopy classes of closed curves. Also the trace of an element of $\tilde G$ is equal to the trace of inverse, thereby, $\tr_\gamma$ does not depend on the orientation of $\gamma$. Recall that for $\gamma \in \pi_1(L)$, the element $\rho(\gamma)$ is elliptic if and only if its trace $\tr_{\gamma} \in (-2, 2)$. Moreover, in this case $\tr_{\gamma}=2\cos(2\theta)$, where $2\theta$ is the rotation angle of $\rho(\gamma)$. For an edge $e$ of $\Sigma$ we denote by $\gamma_e$ the class of the meridian around $e$, i.e., of the boundary of a small disk orthogonal to $e$. 

Fix some sufficiently small $r_0$ such that the closed $r_0$-neighborhoods of vertices do not intersect, and the closed $r_0$-neighborhoods of edges intersect only in the neighborhoods of the adjacent vertices. We will denote the smooth part of the open $r$-neighborhood of a set $X \subset M^D$ by $B_r(X)$, and denote by $\bar B_r(X)$ its closure in $L$. For a vertex $u$ of $\Sigma$ we additionally denote $\Xi_u:=\pt B_{r_0}(u)$.

Let $\Xi$ be a sphere with finitely many punctures, and $\rho: \pi_1(\Xi) \rightarrow \tilde G$ be an irreducible representation, i.e., it does not leave invariant any plane in $\H^3$. 
A pair of pants decomposition $\mc P$ of $\Xi$ is called \emph{$\rho$-admissible} if the restriction of $\rho$ to each pair of pants is irreducible. We will make use of the following result

\begin{lm}[\cite{MW}, Lemma 2.5]
\label{linind}
Let $\Xi$ be a punctured sphere, $\mc Q$ be a set consisting of one peripheral curve for each puncture, $\rho: \pi_1(\Xi) \rightarrow \tilde G$ be an irreducible representation and $\mathcal P$ be a $\rho$-admissible pair of pants decomposition. Then $\{d\tr_\gamma\}_{\gamma \in \mc P \cup \mc Q}$ are $\C$-linearly independent in $T^*_\chi \mc X(\pi_1(\Xi), \tilde G)$ at $\chi=[\rho]$.
\end{lm}

For $\alpha_1, \alpha_2, \alpha_3 \in (0, 2\pi)$ we denote by $\Xi(\alpha_1, \alpha_2, \alpha_3)$ the smooth part of a (unique) spherical cone-surface with exactly three cone-angles of given values, if it exists. (It is easy to see that it exists if and only if the sum of the two largest angles is smaller than the smallest one plus $\pi$.) Now let $\Xi$ be the smooth part of an oriented convex spherical cone-surface with at least 3 vertices. Let $\mc P$ be a pair of pants decomposition of $\Xi$. Assume that the holonomy angle of every curve in $\mc P$ is $<2\pi$ and for every pair of pants $\Xi'$ of $\mc P$ there exists the respective surface $\Xi(\alpha_1, \alpha_2, \alpha_3)$, where the angles are either the holonomy angles of the boundary curves of $\Xi'$ or the cone-angles of vertices. A developing map allows to immerse the surface $\Xi'$ into $\Xi(\alpha_1, \alpha_2, \alpha_3)$. Assume additionally that this immersion is an embedding for every pair of pants $\Xi'$. If all our assumptions hold, we say that $\mc P$ is \emph{suitable}. Now we can formulate our main technical lemma

\begin{lm}
\label{cohomol}
For every vertex $u$ of $\Sigma$ let $\mc P_u$ be a suitable $\rho$-admissible pair of pants decomposition of $\Xi_u$. If $y \in H^1(L, E)$ satisfies $d\tr_{\gamma_e}(y)=0$ for every edge $e$ of $\Sigma$ and $d\tr_\gamma(y)=0$ for every $\gamma$ in every $\mc P_u$, then $y=0$.
\end{lm}

Note that the induced metric on $\Xi_u$ have positive constant Gaussian curvature, which may be different from one, but we check the suitability by scaling the metric to make it spherical.

\begin{rmk}
We think that the condition that $\mc P_u$ are suitable is redundant. The suitability, however, allows to use directly subtle analytic bounds from~\cite{Wei2}, namely, Lemma~\ref{anest} below. Moreover, in our situation it is straightforward that the pair of pants decompositions under consideration are suitable. Without suitability we may try to develop a small neighbourhood of each curve $\gamma$ to a respective spherical suspension and perform there a similar analysis as it is done below. This should allow us to obtain slightly worse (due to spectral properties of spherical suspensions, see~\cite[Lemma 3.18]{Wei2}), but still sufficient bounds. We choose not to pursue this path. 
\end{rmk}

We now deduce Lemma~\ref{infrig} from Lemma~\ref{cohomol}, and then prove Lemma~\ref{cohomol}.

\begin{proof}[Proof of Lemma~\ref{infrig}.]
Denote $d D(x)$ by $y$, considered as an element of $H^1(L, E)$. As $y$ is tangent to the $D$-image of $\mathcal P(M, V)$, in particular, it is tangent to $\mathcal C(M^D)$. Then for all meridians $\gamma_e$ we get $\Im(d\tr_{\gamma_e})(y)=0$. Next, as $d\mathcal I_V(x)=0$, then for the dihedral angle $\theta_e$ of each edge we have $d\theta_e(x)=0$. Thus, $\Re(d\tr_{\gamma_e})(y)=0$, so $d\tr_{\gamma_e}(y)=0$.

Consider a vertex $u$ of $\Sigma$. Look at $u$ also as at a vertex of $M$. Its spherical link in $M$ is a convex spherical polygon $Z$, and its spherical link in $M^D$ is a convex spherical cone-surface $Z^D$, which is the double of $Z$. Denote also by $Z^*$ the dual spherical polygon to $Z$. The tangent vector $x$ induces the decomposition of $Z^*$ into convex polygons (without adding new vertices), which we extend to a triangulation. For each interior edge of this triangulation we consider an arbitrary smooth curve in $Z$ connecting the respective edges of $Z$ and orthogonal to them, such that these curves are disjoint for different interior edges of the triangulation. This produces a decomposition of $Z$ into curvilinear polygons. The double of this decomposition is a pair of pants decomposition of the smooth part of $Z^D$, which gives a pair of pants decomposition $\mc P_u$ of $\Xi_u$. Note that it is suitable: this is easy to see since every curvilinear polygon $C$ in $Z$ belongs to the convex spherical triangle formed by the extensions of those edges of $C$ that come from the edges of $Z$ (note that there are always three such edges). Since $d\mc I_V(x)=0$, by construction, $d\tr_\gamma(y)=0$ for every $\gamma \in \mc P_u$.

It is also easy to see that $\mathcal P_u$ is admissible for the restriction of $\rho$ to $\pi_1(\Xi_u)$. Indeed, its restriction to any pair of pants is conjugate to the holonomy of a spherical cone-metric obtained by gluing two copies of a convex spherical triangle. Since the cone-angles are smaller than $2\pi$ and the side-lengths of the triangle are smaller than $\pi$, the holonomy can not leave invariant a plane in $\H^3$.

Now Lemma~\ref{cohomol} shows that $y=0$. Then Lemma~\ref{double} implies that $x=0$.
\end{proof}

Our proof of Lemma~\ref{cohomol} will employ the Bochner Technique and will follow the proof of~\cite[Theorem 3.15]{Wei2}, though there will be some differences.
We need to make more preparations. Pick some $\Xi_u=\Xi$ with the restriction of the bundle $E$ to it and a $\rho$-admissible pair of pants decomposition $\mc P_u=\mc P$. Let $\mc Q_u=\mc Q$ be a set consisting of one peripheral curve in $\Xi$ for each cone-point. From~\cite[Lemma 3.10]{Wei2}
\begin{equation}
\label{c1}
H^1_{L^2}(\Xi, E) \subset \bigcap_{\gamma \in \mc Q}\ker(d\tr_\gamma).
\end{equation}
Here and in what currently follows the kernels are taken in $H^1(\Xi, E)$. Due to~\cite[Lemma 3.9]{Wei2}, we have 
\begin{equation}
\label{c2}
\dim_\C(H^1(\Xi,E))=3(\deg(u)-2),~~~~~\dim_\C(H^1_{L^2}(\Xi, E))=2(\deg(u)-3).
\end{equation}
By Lemma~\ref{linind}, $\{d\tr_\gamma\}_{\gamma \in \mc Q}$ are $\C$-linearly independent in $H^1(\Xi, E)^*$, where $*$ means the dual vector space. This,~(\ref{c1}) and~(\ref{c2}) show that
$$H^1_{L^2}(\Xi, E) =\bigcap_{\gamma \in \mc Q}\ker(d\tr_\gamma).$$
Then Lemma~\ref{linind} also implies that $\{d\tr_\gamma\}_{\gamma \in \mc P}$ are $\C$-linearly independent in $H^1_{L^2}(\Xi, E)^*$. We have 
\begin{equation}
\label{c3}
\dim_\C\left(H^1_{L^2}(\Xi, E) \cap \left(\bigcap_{\gamma \in \mc P}\ker(d\tr_\gamma)\right)\right)=\deg(u)-3.
\end{equation}
Denote this space by $Y_u=Y$.

The trace form $h$ on $\mf g$ is $\Ad$-invariant. Thereby, it induces a parallel non-degenerate $\C$-valued form $h^E$ on $E$. For $\omega_1, \omega_2 \in \Omega^1_{L^2}(\Xi, E)$ consider the expression 
\begin{equation}
\label{c4}
H(\omega_1, \omega_2):=\int_\Xi h^E(\omega_1\wedge \omega_2).
\end{equation}
From the work of Cheeger~\cite{Che} the $L^2$-Stokes Theorem holds for $\Xi$ (it is important here that $\Xi$ is 2-dimensional, so the links of cone-points are just circles). Thereby, $H$ descends to a skew-symmetric form on $H^1_{L^2}(\Xi, E)$, which we continue to denote by $H$. For $A \in \tilde G$ let $F(A) \in \mf g$ be an element such that for every $X \in\mf g$ we have $h(F(A), X)=d\tr_A(X)$.

Let $\gamma$ be a smooth simple closed curve on $\Xi$. Orient it arbitrarily and pick a smooth collar $\bar C_\gamma \subset \Xi$ to the left of $\gamma$ parametrized as $\gamma \times [0,1]$. Let $\phi: [0,1] \ra [0,1]$ be a smooth function equal to 0 in a neighborhood of zero and to 1 in a neighborhood of one. Let $\phi_\gamma: \bar C_\gamma \ra \R_+$ be a smooth function that is equal to $\phi$ on every $[0,1]$-fiber of the parametrization of $\bar C_\gamma$. For $p \in \gamma$ we pick an identification of the fiber of $E$ over $p$ with $\mf g$ and choose there the element $F(\rho(\gamma))$. Since $F(\rho(\gamma))$ is $\Ad(\rho(\gamma))$-invariant, it defines a parallel section $\sigma_\gamma$ over $\bar C_\gamma$. Define $\omega_\gamma:=d\phi_\gamma \otimes \sigma_\gamma \in \Omega^1_{L^2}(\Xi, E)$, which is a smooth compactly supported 1-form. Since $\sigma_\gamma$ is a parallel section, $\omega_\gamma$ is closed. Denote its class by $y_\gamma \in H^1_{L^2}(\Xi, E)$. A simple computation using the Stokes Theorem shows that 
\begin{equation}
\label{c5}
H(y_\gamma, y)=d\tr_\gamma(y)
\end{equation}
for every $y \in H^1_{L^2}(\Xi, E)$, see~\cite[p. 378]{MW}. 

We pick disjoint collars $\bar C_\gamma$ for all $\gamma \in \mc P$ and produce the 1-forms $\omega_\gamma$ by the above procedure. Since $\{d\tr_\gamma\}_{\gamma \in \mc P}$ are $\C$-linearly independent in $H^1_{L^2}(\Xi, E)^*$, (\ref{c5}) implies that the classes $y_\gamma$ are $\C$-linearly independent in $H^1_{L^2}(\Xi, E)$. Further, the disjointness of the supports of $\omega_\gamma$, the skew-symmetry of $H$, (\ref{c4}) and (\ref{c5}) mean that $y_\gamma \in \bigcap_{\gamma' \in \mc P}\ker(d\tr_{\gamma'})$ for every $\gamma \in \mc P$. 
This and (\ref{c3}) result in

\begin{lm}
\label{canform}
The classes $y_\gamma$, $\gamma \in \mc P_u$, form a basis for $Y_u$.
\end{lm} 

Now we return to the space $L$. We have

\begin{lm}[\cite{Wei2}, Corollary 4.17]
\label{L_2}
$$H^1_{L^2}(L, E)=\bigcap_e \ker(d\tr_{\gamma_e}),$$
where the intersection is over all edges of $\Sigma$ and the kernels are taken in $H^1(L, E)$.
\end{lm}

Furthermore,~\cite[Corollary 2.3 and Lemma 3.14]{Wei2} imply the $L^2$-Hodge Theorem 

\begin{lm}
\label{hodge}
Every class in $H^\bullet_{L^2}(L, E)$ has a unique representative $\omega \in \Omega^\bullet_{L^2}(L, E)$ satisfying $d^E \omega=\delta^E \omega=0$.
\end{lm}

We need to recall some information about $d^E$ and $\delta^E$, the differential and the codifferential on $\Omega^\bullet(L, E)$. Let $(e_1, e_2, e_3)$ be an orthonormal frame on $B_{r_0}(u)$ for a vertex $u$ of $\Sigma$, where $e_3=\pt/\pt r$ and $r$ is the radial coordinate. Consider the operators
$$D=\sum_{i=1}^3 \e(e^i)\nabla_{e_i},~~~~~T=\sum_{i=1}^3 \e(e^i)\ad(E_i),$$
$$D^t=-\sum_{i=1}^3 \iota(e_i)\nabla_{e_i},~~~~~T^t=\sum_{i=1}^3 \iota(e_i)\ad(E_i),$$
where $\e(.)$ and $\iota(.)$ are the extrinsic and intrinsic products on forms, and $E_i$ is the infinitesimal translation in the direction $e_i$.
Then $d^E=D+T$ and $\delta^E=D^t+T^t$. Note that $T$ and $T^t$ are bounded 0th-order operators, see details in~\cite{HK, MM, Wei, Wei2}.

Finally, we need an analytic estimate on the behavior of 1-forms on cones. Let $\Xi$ be the smooth part of a convex spherical cone-metric on the 2-sphere with at least three vertices, and $B_{r_0}:=(0, r_0) \times \Xi$ be equipped with the metric of a hyperbolic cone over $\Xi$ (i.e., $g=dr^2+\sinh^2(r)g_{\Xi}$). Again, $(e_1, e_2, e_3)$, $e_3=\pt/\pt r$, is an orthonormal frame, now on $B_{r_0}$. The following is a consequence of Corollary 2.14 and Lemma 3.18 in~\cite{Wei2} and of the fact that on $\Xi_r:=\{r\} \times \Xi$ the volume form is $\sinh^2(r)dvol_{\Xi}$.

\begin{lm}
\label{anest}
Let $\xi$ and $\zeta$ be real-valued 1-forms on $B_{r_0}$ such that they, $d\xi$ and $\delta\xi$ are in $L^2$ and $\Delta \xi + 4\xi=\zeta$ (note that here $d$, $\delta$ and $\Delta$ are the standard differential, codifferential and the Hodge Laplacian for the Riemannian metric on $B_{r_0}$). Then there exists $\alpha>0$ such that as $r \ra 0$ we have
$$\|\xi\|_{L^2(\Xi_r)}=O(r^{\alpha+1}),~~~~~\|\nabla_{e_i}\xi\|_{L^2(\Xi_r)}=O(r^\alpha)~~~~~ {\rm for}~i=1,2.$$
Here the norms are taken for the restrictions of the respective forms to $\Xi_r$.
\end{lm}

Now we have all ready to prove Lemma~\ref{cohomol}.

\begin{proof}[Proof of Lemma~\ref{cohomol}]
Since $y \in \bigcap_e \ker(d\tr_{\gamma_e})$, Lemma~\ref{L_2} implies that $y \in H_{L^2}(L, E)$. By Lemma~\ref{hodge} it can be represented by an $L^2$-harmonic 1-form $\omega \in \Omega^1_{L^2}(L, E)$. Define $L_r:=L\backslash B_r(\Sigma)$ and $K_r:=\pt L_r$. Integrating by parts, we obtain (see~\cite[Proposition 1.3]{HK})
$$\int_{L_r} (|d^E \omega|^2+|\delta^E \omega|^2)=\int_{L_r}(|D\omega|^2+|D^t\omega|^2+\langle Q\omega, \omega \rangle) - \int_{K_r}(*T\omega \wedge \omega + T^t\omega \wedge *\omega),$$
where $Q=TT^t+T^tT$.
Since $\omega$ is harmonic, this expression is equal to zero. We show that the boundary terms
$$A_r:=-\int_{K_r}(*T\omega \wedge \omega + T^t\omega \wedge *\omega)$$
converge to zero as $r \rightarrow 0$. It is shown in~\cite{MM} that $\langle Q \omega, \omega \rangle \geq c |\omega|^2$ for $c>0$. Therefore, we obtain that $\omega$ is zero and hence $y=0$. 

We decompose $K_r$ into the union of $K_{e,r}:=\pt B_r(e)\backslash (\bar B_r(u_1) \cup \bar B_r(u_2))$ where $e$ is an edge of $\Sigma$ with vertices $u_1$ and $u_2$, and of $K_{u,r}:=K_r \cap \pt B_r(u)$ where $u$ is a vertex of $\Sigma$. We denote the respective integrals over $K_{e,r}$ and $K_{u,r}$ by $A_{e,r}$ and $A_{u,r}$. The terms $A_{e,r}$ are bounded exactly as in~\cite[Proposition 3.17]{Wei2} without changes. What is different is the bound on $A_{u,r}$. 

At every point $p \in L$ the fiber $E_p$ can be decomposed into the direct orthogonal sum of the infinitesimal rotations around $p$ and the infinitesimal translations. 
The second summand is naturally isomorphic to $T_pL$. A special feature of dimension 3 is that the first summand is also naturally isomorphic to $T_pL$, see~\cite[Section 2]{HK}. We denote the respective decomposition of $E$ by $E=E^1 \oplus E^2$. Decompose respectively $\omega=(\omega^1, \omega^2)$. Then 
\begin{equation}
\label{t}
A_{u,r}=2\int_{K_{u,r}}\omega^1\wedge\omega^2,
\end{equation} 
see~\cite[Lemma 3.16]{Wei2}. 


For $\gamma \in \mc P_u$ we produce a 1-form $\omega_\gamma \in \Omega^1_{L^2}(\Xi_u, E)$ by the construction described after the statement of Lemma~\ref{cohomol} and denote its class by $y_\gamma \in H^1_{L^2}(\Xi_u, E)$. We also recall the spaces $Y_u \subset H^1_{L^2}(\Xi_u, E)$.
The surface $\Xi_u$ is a deformation retract of $\bar B_{r_0}(u)$ by the radial projection $\pi_u: \bar B_{r_0}(u) \ra \Xi_u$. Consider the image $y_u$ of $y$ in $H^1(\Xi_u, E)$. Because of the assumption on $y$, we have $y_u \in Y_u$. Thereby, it can be represented by a 1-form $\omega_u$, which is a linear combination of the 1-forms $\omega_\gamma$ for $\gamma \in \mc P_u$. Note that $\omega_u$ is in $L^2$. Abusing the notation, we continue to denote the $\pi_u$-pullback of this form to $B_{r_0}(u)$ by $\omega_u$. The computation of $L^2$-norm shows that the $\pi_u$-pullbacks of sections and 1-forms in $L^2$ are in $L^2$, particularly $\omega_u$ is in $L^2$ on $B_r(u)$. Thus, on $B_r(u)$ we have $\omega=d^Es+\omega_u$ for an $L^2$-section $s$.

Substituting this into~(\ref{t}), we decompose $A_{u,r}$ into two terms, $A_{u,r}=A_{u,r}^s+A_{u,r}^\omega$, where $A_{u,r}^\omega:=2\int_{K_{u,r}}\omega^1_u\wedge\omega^2_u$. Let us bound the first summand. Let $s=(s^1, s^2)$ according to $E=E^1 \oplus E^2$. Because $\omega$ is coclosed, we get $\Delta^E s=-\delta^E \omega_u$, where $\Delta^E$ is the Hodge Laplacian of the bundle $E$. We claim that $\delta^E \omega_u$ is in $L^2$. Indeed, the holonomy restricted to $\Xi_u$ preserves a point $p \in \H^3$. The bundle $E$ over $B_{r_0}(u)$ can be endowed with a parallel metric inherited from $T_{p}\H^3$ (this is not the standard metric of $E$). Denote by $\delta^E_0$ the codifferential on $\Omega^\bullet(B_{r_0}(u), E)$ with respect to the new metric. Note that on $\Xi_u$ the section $\delta^E_0 \omega_u$ is compactly supported (because $\omega_u$ on $\Xi_u$ is compactly supported), thus it is in $L^2$. It follows that $\delta^E_0 \omega_u$ on $B_{r_0}(u)$ is also in $L^2$. But the difference $\delta^E-\delta^E_0$ is a bounded operator, see~\cite[p. 353]{Wei2}. Since $\omega_u$ is itself in $L^2$ on $B_{r_0}(u)$, we get that $\delta^E \omega_u$ is in $L^2$ as claimed.

As we mentioned, both sub-bundles $E^1$ and $E^2$ are naturally isomorphic to $TL$. Hence, we may view $s^1$ and $s^2$ as vector fields on $L$.
Denote by $\xi^1$ and $\xi^2$ the real-valued 1-forms dual to to $s^1$ and $s^2$ and by $\zeta^1$ and $\zeta^2$ the real-valued 1-forms dual to the components of $-\delta^E\omega_u$. The equation $\Delta^E s=-\delta^E \omega_u$ translates to (see~\cite[p. 347]{Wei2})
$$\Delta \xi^1 + 4\xi^1=\zeta^1,~~~~~\Delta \xi^2 + 4\xi^2=\zeta^2,$$
where all the terms are in $L^2$. We claim that $d\xi^j$ and $\delta \xi^j$ are also in $L^2$, $j=1,2$. Indeed, since $T$ is bounded, $\nabla\xi^j$ are in $L^2$. But $d\xi^j=\e\circ \nabla\xi^j$ and $\delta\xi^j=-\iota\circ\nabla\xi^j$. Thereby, we can apply Lemma~\ref{anest} and get as $r \ra 0$
\begin{equation}
\label{t2}
\|\xi^j\|_{L^2(\Xi_{u,r})}=O(r^{\alpha+1}),~~~~~\|\nabla_{e_i}\xi^j\|_{L^2(\Xi_{u,r})}=O(r^\alpha),
\end{equation}
where $\Xi_{u,r}=\pt B_r(u)$ and $i=1,2$.

Since $\ad$ switches the infinitesimal rotations and infinitesimal translations, we have $(d^E s)^1=Ds^1+Ts^2$ and $(d^E s)^2=Ds^2+Ts^1$. We substitute it to $A_{u,r}^s$:
$$A_{u,r}^s=A_{u,r}-A_{u,r}^\omega=2\int_{K_{u,r}} \langle (Ds^1+Ts^2+\omega^1_u)(e_1), (Ds^2+Ts^1)(e_2) \rangle-$$ $$-2\int_{K_{u,r}} \langle (Ds^1+Ts^2+\omega^1_u)(e_2), (Ds^2+Ts^1)(e_1) \rangle+$$
$$+2\int_{K_{u,r}} \langle (Ds^1+Ts^2)(e_1), \omega^2_u(e_2) \rangle-2\int_{K_{u,r}} \langle (Ds^1+Ts^2)(e_2), \omega^2_u(e_1) \rangle.$$
We expand this expression into further integrals of the scalar products of the individual summands and bound every integral separately by the Cauchy--Schwarz Inequality. We will compute the $L^2$-norms not over $K_{u,r}$, but rather over $\Xi_{u,r}\supset K_{u,r}$. For terms $\omega^j_u(e_i)$ we use the trivial pointwise bounds $|\omega^j_u(e_i)|=O(r^{-1})$, thus $\|\omega^j_u(e_i)\|_{L^2(\Xi_{u,r})}=O(1)$ as $r \ra 0$. Together with (\ref{t2}) and with the fact that $T$ is bounded it gives $|A_{u,r}^s|=O(r^\alpha)$.

Now it remains to bound $A_{u,r}^\omega$. We have
$$\omega_u=\sum_{\gamma \in \mc P_u} \lambda_\gamma \omega_\gamma$$
for some $\lambda_\gamma \in \C$. Since the supports of $\omega_\gamma$ are disjoints, we get
$$A^\omega_{u,r}=\sum_{\gamma \in \mc P_u} 2\lambda_\gamma \int_{K_{u,r}}\omega^1_\gamma\wedge\omega^2_\gamma$$
(recall that we do not distinguish in notation the forms on $\Xi_u$ and their pullbacks on $B_{r_0}(u)$). Define $A_{\gamma,r}:=\int_{K_{u,r}}\omega^1_\gamma\wedge\omega^2_\gamma.$

For a given $\gamma \in \mc P_u$ the form $\omega_\gamma$ on $\Xi_u$ is supported in $\bar C_\gamma$, which belongs to one of the pair of pants determined by $\mc P_u$ (since the collars are disjoint for different curves). Let us denote this pair of pants by $\Xi'_\gamma$. Denote the respective surface $\Xi(\alpha_1, \alpha_2, \alpha_3)$, given by the suitability of $\mc P_u$, by $\Xi_\gamma$. Hence, after a scaling, $\Xi'_\gamma$ naturally embeds to $\Xi_\gamma$. The spherical metric provides us a holonomy map $\rho_\gamma: \pi_1(\Xi_\gamma) \ra {\rm SO}(2)$. We consider ${\rm SO}(2)$ as a subgroup of ${\rm SL}(2, \C)=\tilde G$, and view $\rho_\gamma$ as having values in $\tilde G$. Thus, we can define the twisted $\Ad\circ\rho_\gamma$-bundle over $\Xi_\gamma$ with the fiber $\mf g$, which we denote by $E_\gamma$. Since such a bundle depends only on the conjugacy class of a representation, the pullback of $E_\gamma$ to $\Xi'_\gamma$ is naturally isomorphic to the restriction of $E$ to $\Xi'_\gamma$. We now denote $E_\gamma$ also just by $E$ and consider the 1-form $\omega_\gamma$ now as a $E$-valued 1-form on $\Xi_\gamma$. We extend $E$ to the bundle over $B_{\gamma,r_0}:=(0, r_0)\times\Xi_\gamma$ equipped with the hyperbolic cone-metric. The radial projection of $B_{\gamma,r_0}$ to $\Xi_\gamma$ is a homotopy equivalence, and we continue to denote the pullback of $\omega_\gamma$ to $B_{\gamma,r_0}$ by $\omega_\gamma$. Define $C_{\gamma, r}$ to be the intersection of the support of $\omega_\gamma$ with $\{r\}\times\Xi_\gamma$.

Since $\omega_\gamma$ is compactly supported on $\Xi_\gamma$, it is in $L^2$, both on $\Xi_\gamma$ and on $B_{\gamma, r_0}$. Due to~\cite[Lemma 3.9]{Wei2}, we have $H^1_{L^2}(\Xi_\gamma, E)=0$. Thereby, we have $\omega_\gamma=d^Es$ for a smooth $L^2$-section $s$ on $\Xi_\gamma$, which we extend to $B_{\gamma, r_0}$. We get $\Delta^E s=\delta^E\omega_\gamma$ on $B_{\gamma, r_0}$. Pick an orthonormal frame $(e_1, e_2, e_3)$, $e_3=\pt/\pt r$, on $B_{\gamma, r_0}$. We have
$$A_{\gamma, r}=\int_{C_{\gamma,r}} \langle (Ds^1+Ts^2)(e_1), (Ds^2+Ts^1)(e_2) \rangle-$$ $$-2\int_{C_{\gamma,r}} \langle (Ds^1+Ts^2)(e_2), (Ds^2+Ts^1)(e_1) \rangle.$$
Using Lemma~\ref{anest}, we obtain that $|A_{\gamma, r}|=O(r^{2\alpha})$ as $r \ra 0$. We conclude that $A_r\ra 0$, which finishes the proof.
\end{proof}

\section{Properness}
\label{psec}

The goal of this section is to prove

\begin{lm}
\label{proper}
Let $\{g_i\}$ be a sequence in $\mathcal P(M, V)$ such that the sequence of the induced dual metrics $\{d_i\}$ converges to a metric $d \in \ol{\mathcal D}_M(\partial M, V)$. Then, up to extracting a subsequence, $\{g_i\}$ converges to $g \in \ol{\mathcal P}(M, V)$ with $V(g)=V(d)$.
\end{lm}

We will employ the \emph{spherical model} of $\H^3$ and $\dS^3$. Denote the \emph{Euclidean scalar product} on $\R^4$ by $\langle .,. \rangle_E$, i.e.,
$$\langle x, y \rangle_E :=x_0y_0+x_1y_1+x_2y_2+x_3y_3.$$
We denote by $\S^3 \subset \R^4$ the standard unit sphere
$$\S^3=\{x \in \R^4: \langle x, x \rangle_E =1\}$$
endowed with the metric induced from $\langle .,. \rangle_E$. The central projection from the origin of $\R^4$ maps injectively $\H^3$ and $\dS^3$ to subsets of $\S^3$ denoted by $\H_s^3$ and $\dS_s^3$. Notably it sends geodesics to geodesics, and, thus, convex sets to convex sets. The boundary at infinity $\partial_{\infty} \dS^3$ projects to two spherical caps denoted by $\partial_{\infty}^+ \dS_s^3$ and $\partial_{\infty}^- \dS_s^3$.

\begin{rmk}
If $C \subset \H_s^3$ or $C \subset \dS_s^3$ is (future-)convex, then the dual set $C^*$ can be obtained from the usual polarity on $\S^3$ composed with the reflection with respect to the hyperplane $x_0=0$. Indeed, this follows from the fact that the linear plane, orthogonal to a vector with respect to the Minkowski metric, differs from the linear plane, orthogonal to the same vector with respect to the Euclidean metric, by reflection with respect to the hyperplane $x_0=0$.
\end{rmk}

Let $S=S_j$ be a connected component of $\partial M$ and $W=V_j \subset V$ be the subset of $V$ marking the faces of $S$. (To simplify the notation, for the moment we will omit the subscript $j$.) We denote by $\nu=\nu_j: \pi_1(S) \rightarrow \pi_1(M)$ the push-forward homomorphism induced by the inclusion of $S$ to $M$, and denote by $D=D_j$ the covering space of $S$ corresponding to $\ker(\nu)$. The latter is homeomorphic to a  boundary component of $\tilde M$ (possibly to infinitely many of them). It is either homeomorphic to the open 2-disk if $S$ is incompressible, otherwise $D$ is infinitely connected. 

Denote by $d \in \mc D(S, W)$ the restriction of a metric from $\mc D_M(\pt M, V)$. We continue to denote the lift of $d$ to $D$ by $d$. Note that all closed geodesics in $(D, d)$ have lengths greater than $2\pi$. Let $\mathcal T$ be a triangulation of $(S, W)$ such that $d$ is $\mc T$-triangulable and its lift to $D$ is a simplicial complex. We continue to denote this lift by $\mathcal T$, but we denote the lift of $W$ to $D$ by $\tilde W$.

\begin{dfn}
\label{regemb}
An isometric embedding $\phi: (D , d) \rightarrow \dS^3$ is called \emph{$\mathcal T$-regular} if

1) it is polyhedral with respect to $\mathcal T$,

2) it is future-convex, i.e., the image belongs to the boundary of a future-convex set.
\end{dfn}

We assume that $\phi$ maps to the spherical model $\dS_s^3 \subset \S^3$. Since $\phi$ is an isometric embedding, all faces are necessarily space-like. 
From future-convexity, $\phi(D ) \subset \conv_s(\phi(D ))$, where $\conv_s$ is the closed convex hull in $\S^3$, though a priori it is possible that $\conv_s(\phi(D )) \cap \dS^3$ is not future-convex.

But this is not the case for us, because the metric $d$ is complete. Indeed, we claim that all limit points of $\phi(D )$ that do not belong to $\phi(D )$ are in $\partial_{\infty}^+ \dS^3$. Suppose the contrary, that such a point $q$ exists in $\dS^3$. Then $q \in \pt\conv_s(\phi(D ))$. Pick $p \in \phi(D )$, there exists a rectifiable curve (with respect to the spherical metric) $\gamma:[0, a_0] \ra \pt\conv_s(D ) \cap \dS^3$ connecting $p$ and $q$. Let $[0, a]$ be the maximal segment such that $\gamma|_{[0, a)}$ belongs to $\phi(D )$. The curve $\gamma|_{[0, a)}$ has finite spherical length and belongs to a space-like polyhedral surface, hence it also has a finite de Sitter length. Since $\phi(D )$ is relatively open in $\pt\conv_s(\phi(D ))$, the point $\gamma(a)$ is not in $\phi(D )$. But $\phi^{-1}\circ\gamma|_{[0, a)}$ is a curve of finite length in $(D , d)$, hence, due to completeness of $d$, it extends to $a$. Thereby, $\gamma(a) \in \phi(D )$, which is a contradiction. It follows that $\conv_s(\phi(D ))\cap \dS^3_s$ is a future-convex set and
\begin{equation*}
\big(\partial\conv_s(\phi(D )) \backslash \phi(D )\big) \subset \big(\partial_{\infty}^+ \dS_s^3 \cup \H_s^3\big).
\end{equation*}

In the setting of Lemma~\ref{proper} consider the sequence of $\tilde M(g_i)$ embedded to $\H^3$ equivariantly with respect to the holonomy maps $\rho_i$. There exists a connected component of $\pt\tilde M(g_i)$ that is invariant with respect to $\rho_i|_{\im(\nu)}$. The dual surface to this component is an embedding $\phi_i: D \ra \dS^3$. We first study what happens with this sequence of embeddings provided that the induced metrics converge (which are the dual metrics for the respective components of $\pt \tilde M(g_i)$). We now focus on the restrictions of $d_i$ and $d$ to $S$, so for the moment we perceive them as elements of $\mc D(S, W)$.  

Due to Lemma~\ref{fintriang}, there are only finitely many triangulations $\mc T$ such that one of $d_i$ is $\mc T$-triangulable. By passing to a subsequence, we may assume that there is a triangulation $\mathcal T$ such that its lift to $D $ is a face triangulation of all $\phi_i(D )$, particularly all $d_i$ are $\mc T$-triangulable. Note that the lift of $\mathcal T$ to $D $ is a simplicial complex and all $\phi_i$ are $\mathcal T$-regular.

\begin{lm}
\label{dualconv}
After possibly composing each $\phi_i$ with an isometry $\eta_i$ of $\dS^3$ and passing to a subsequence, $\phi_i$ converge to a $\mathcal T$-regular embedding $\phi: (D , d) \rightarrow \dS^3$.
\end{lm}

\begin{proof}
After passing to a subsequence, we can assume that for all $v \in \tilde W $ the sequence $\phi_i(v)$ converges to a point in $$\overline\dS_s^3:=\dS_s^3 \cup \partial_{\infty}^+ \dS_s^3 \cup \partial_{\infty}^- \dS_s^3.$$ We denote this point by $\phi(v)$, so $\phi$ is a map from $\tilde W $ to $\overline\dS_s^3$.

Note that due to the future-convexity of $\phi_i$, the intersection $\phi(\tilde W ) \cap \partial_{\infty}^- \dS_s^3$ consists of at most one point. If this intersection is a point $p$, then we can compose each $\phi_i$ with an isometry $\eta_i$ of $\dS^3$ that has $p$ as its fixed repulsive point so that this intersection becomes empty. Abusing the notation, we continue to denote the corrected maps $\eta_i \circ \phi_i$ just by $\phi_i$. By a similar procedure we can also assume that $\phi(\tilde W )\cap \dS_s^3$ consists of at least one point, continuing to keep $\phi(\tilde W ) \cap \partial_{\infty}^- \dS_s^3$ empty. Let us call the points of $\tilde W $ with $\phi$-images in $\partial_{\infty}^+ \dS_s^3$ \emph{degenerate}. One of the key steps in our proof is to establish that there are no degenerate points.

Observe that if $v,w \in \tilde W $ are degenerate and adjacent in $\mathcal T$, then $\phi(v)=\phi(w)\in \partial_{\infty}^+\dS_s^3$, as otherwise for some large $i$ the segment between $\phi_i(v)$ and $\phi_i(w)$ is not space-like.

We now extend $\phi$ to a continuous $\mathcal T$-polyhedral map sending all $D $ to $\dS_s^3 \cup \partial_{\infty}^+ \dS_s^3$. For a triangle $T$ without non-degenerate vertices, we just extend $\phi$ to $T$ isometrically. The same we do for every remaining edge with both non-degenerate vertices. For an edge or a triangle with all degenerate vertices, we just extend $\phi$ by sending all the edge or all the triangle to the image of the vertices. For edges that have both degenerate and non-degenerate vertices, we must send them to light-like segments, and there is no canonical way to do this, so we do this arbitrarily. After this we arbitrarily extend the map to the triangles that have both degenerate and non-degenerate vertices, by sending their interiors to the relative interiors of the linear spans of the vertices.

This way $\phi$ becomes a continuous $\mathcal T$-polyhedral map from $D $ to $\dS_s^3 \cup \partial_{\infty}^+ \dS_s^3$. We note that by continuity, we still have that the image of $\phi$ belongs to the closure of the boundary of a future-convex set. We will refer to this as to the future-convexity of $\phi$.


For $z \in (D  \backslash \tilde W )$ belonging to the interior of a triangle of $\mathcal T$, we define the open star $\ost(z)$ of $z$ to be the interior of this triangle. If $z$ belongs to the relative interior of an edge of $z$, then by $\ost(z)$ we denote this relative interior together with the interiors of both triangles adjacent to this edge. When $z \in \tilde W $, by $\ost(z)$ we mean the relative interiors of all edges and triangles adjacent to $z$ together with $z$ itself. The open star $\ost(U)$ of a set of points $U \subset D $ is defined as the union of $\ost(z)$ for all $z\in U$.

Define $X:=D \backslash \ost(\phi^{-1}(\partial_{\infty}^+\dS_s^3))$. By construction, $X$ is non-empty. By continuity, the restriction of $\phi$ to $X$ is a path-isometry. We claim that the restriction of $\phi$ to $X$ is injective. Indeed, since it is a path-isometry, it is locally injective. Now suppose that for two points $x, y \in X$ we have $\phi(x)=\phi(y)$. Due to local injectivity, $x \notin \ost(y)$ and vice versa. Pick some small $r$ such that for all sufficiently large $i$ the $r$-neighborhood of $y$ in $(D , d_i)$ is contained in $\ost(y)$. As $\phi(x) \in \dS^3_s$, we can pick its neighborhood $U$ such that (1) for every point $p \in U$ in space-like relation to $\phi(x)$ the future-cones of $\phi(x)$ and of $p$ intersect and so do their past-cones, and (2) the length of any space-like segment in $U$ is at most $r/2$. Next, for every $i$ we pick a neighborhood $U_i$ of $\phi_i(x)$ converging to $U$, satisfying the same conditions, except that in (2) we replace $r/2$ with $r$. As $\phi(x)=\phi(y)$, for large enough $i$ we get $\phi_i(y) \in U_i$. For $p \in \dS^3$ by $FP(p)$ denote the union of the closures of its future- and past-cone. Then $\phi_i(\ost(y))$ intersects $FP(\phi_i(x))$. Indeed, by (1) the space-like cone from $\phi_i(y)$ spanned by $\phi_i(\ost(y))$ intersects $FP(\phi_i(x))$, and by (2) there is an intersection point belonging to $\phi_i(\ost(y))$. This is a contradiction with $\phi_i$ being future-convex.

Suppose that there exists $o \in \phi(D) \cap \pt^+_\infty \dS^3_s$. Then $\phi^{-1}(o)$ is a closed $\mc T$-simplicial set. Define $Y:=\ost(\phi^{-1}(o))$. By construction, $\pt Y\subset X$, hence, the restriction of $\phi$ to $\pt Y$ is an injective path-isometry. 
Let $\Pi$ be the tangent plane to $\pt_\infty^+\dS^3_s$ at $o$ and $P:=\phi(\ol Y)$, where $\ol Y$ is the closure of $Y$ in $D$. Due to future-convexity, we have $P \subset \Pi$.
We claim that for any $x, y \in \pt Y$ the segments between $o$ and $\phi(x)$, $\phi(y)$ are disjoint. A proof is similar to the one above. Indeed, suppose the contrary, and suppose that $\phi(x)$ is between $o$ and $\phi(y)$. Once again, $\ost(x)$ and $\ost(y)$ are disjoint. We pick $r$ such that for all sufficiently large $i$ the $2r$-neighborhood of $y$ in $(D, d_i)$ is contained in $\ost(y)$ and pick neighborhoods $U$ of $\phi(x)$ and $U_i$ of $\phi_i(x)$ as above. Next, consider a vertex $v \in Y \cap \ol{\ost}(y)$, which exists since $y \in \pt Y$. We have $\phi(v)=o$. Pick a sequence $y_i \in D$ such that $\phi_i(y_i)$ belongs to the segment between $\phi_i(v)$ and $\phi_i(y)$, and $\phi_i(y_i)$ converge to $\phi(x)$. Then $y_i$ converge to $y$. For large enough $i$ the $r$-neighborhoods of $y_i$ in $(D, d_i)$ are contained in $\ost(y)$. Next, for $y'$ in the $r$-neighborhood of $y_i$, the space-like distance between $\phi_i(y')$ and $\phi_i(y_i)$ is at least the intrinsic distance between $y'$ and $y_i$. Also for large enough $i$ we have $\phi_i(y_i) \in U_i$, thus $\phi_i(\ost(y_i))$ intersects $FP(\phi_i(x))$. Because $\ost(y_i)\subset\ost(y)$, $\phi_i(\ost(y))$ intersects $FP(\phi_i(x))$, which is again a contradiction.

It follows that the degree of every vertex in $\pt Y$ is at most two, where the degree is meant in $\pt Y$. It also follows that there are two options: either $\pt Y$ consists of non-closed simple paths, or from one simple closed curve.

In the first case, we note that since the restriction of $\phi$ to $\pt Y$ is an injective path-isometry, and the length of $\phi(\pt Y)$ is finite, then the length of each component of $\pt Y$ is finite. Therefore, there exists a simple closed edge-path $\gamma \subset \phi^{-1}(o)$ bounding a component of $\pt Y$. Pick a vertex $v$ in this component. Consider a time-like ray $R$ from $\phi(v) \in \dS^3_s$ and a sequence of time-like rays $R_i$ from $\phi_i(v)$ converging to $R$. Due to future-convexity, $R_i$ intersects $\conv_s(\phi_i(\gamma))$, but this is not true for $R$ and $\conv_s(\phi(\gamma))=o$. This is a contradiction.

In the second case, we have $P = \conv_s(\phi(D )) \cap \Pi$ (this is because $\Pi$ cannot contain the $\phi$-image of any triangle not from $\ol Y$). In particular, $P$ is a compact convex set. As $\partial P$ is a convex curve in $\dS_s^3$ around a point at infinity in a degenerate plane $\Pi$, it has length $2\pi$. As the restriction of $\phi$ to $\pt Y$ is a path-isometry, the latter also has length $2\pi$. One can also see that its angles on the side to $Y$ are $\pi$. This is shown in~\cite[Lemma 6.6]{Sch}. We quickly sketch the argument. Take a vertex $p$ of $P$ and consider a small enough triangular neighborhood $A$ of $p$ in $P$. Let $v:=\phi^{-1}(p)$. We denote by $A_i$ triangular neighborhoods of $\phi_i(v)$ in $\phi_i(\ol Y)$ converging to $A$. The sets $\phi_i^{-1}(A_i)$ are triangular neighborhoods of $v$ in $\ol Y$ with metrics $d_i$. Up to taking a subsequence, they converge to a triangle $B \ni v$ in $\ol Y$ with the metric $d$. But as $A$ is metrically a degenerate triangle, $B$ is a geodesic segment on the side of $\ol Y$ (having angle $\pi$ at $v$). As all cone angles of $d$ are greater than $2\pi$, $\partial Y$ is a simple closed geodesic of length $2\pi$ in $(D , d)$. This is a contradiction.

Thus, $X=D$, so $\phi(D) \subset \dS_s^3$ and $\phi$ is a $\mc T$-polyhedral future-convex isometric embedding of $(D, d)$, i.e, it is $\mc T$-regular. 
\end{proof}

Each embedding $\phi_i$ is equivariant with respect to a convex cocompact, discrete and faithful representation $\sigma_{i}:=\rho_i|_{\im(\nu)}: \im(\nu ) \rightarrow G$. As $\phi_i$ converge to $\phi$, for each $\gamma \in \pi_1(S )$ the elements $\sigma_{i}(\gamma)$ converge to an element of $G$. A theorem of J{\o}rgensen~\cite{Jor} says that the set of discrete and faithful representations to $G$ is closed in the space of representations. Thus, $\sigma_{i}$ converge to a discrete and faithful representation $\sigma : \im(\nu ) \rightarrow G$ and $\phi$ is equivariant with respect to $\sigma $.

The dual to the embedded surface $\phi(D )$ is a convex polyhedral embedding of $D $ to $\H^3$ equivariant with respect to $\sigma $. As ${\rm im}(\nu )$ contains no torsion, the image of $\sigma $ contains no elliptic elements, so the action is free. By taking the quotient, we obtain a hyperbolic convex cone-metric on $S $, which we denote by $d^I$. By continuity, we get that the metrics $d_i^I$ on $S $ induced by $g_i$ at the boundary of $M$ converge uniformly to a convex hyperbolic cone-metric $d^I$ on $S $.

We do these procedures for each connected component $S_j$ of $\partial M$. We obtain a convex hyperbolic cone-metric on $\partial M$, which we continue to denote by $d^I$.

\begin{lm}
\label{compactind}
Let $\{g_i\}$ be a sequence in $\mathcal P(M, V)$ such that the sequence of the induced intrinsic metrics $\{d_i^I\}$ on $\partial M$ converges uniformly to a convex hyperbolic cone-metric $d^I$ on $\partial M$. Then up to extracting a subsequence, the sequence $\{\overline g_i\}$ converges to a convex cocompact hyperbolic metric $\overline g \in \mathcal{CH}(N)$.
\end{lm}

We need a short preliminary work. Let $\overline g$ be a convex cocompact hyperbolic metric on $N$ and $s \in \partial_{\infty}\H^3 \backslash \Lambda(\overline g)$. Consider a small horosphere at $s$ such that it does not intersect $\tilde C(\overline g)$. Let $\mu(s) \in \partial \tilde C(\overline g)$ be the closest point from $\tilde C(\overline g)$ to this horosphere. Clearly, $\mu(s)$ is independent on the choice of a horosphere, and determines a map from $\partial_{\infty} N(\overline g)$ to $\partial C(\overline g)$, still denoted by $\mu$. 
When $\overline g$ is Fuchsian of the first kind, we modify $\mu$ so that its target is two copies of $C(\overline g)$, and $\mu$ remembers, from which side of $C(\overline g)$ the point of $\partial_{\infty}N(\overline g)$ was located. When $\overline g$ is Fuchsian of the second kind, we consider the double of $C(\overline g)$ along its relative boundary as the target of $\mu$ (keeping track on the side where $s$ was located except when $\mu(s)$ is in the relative boundary of $C(\overline g)$). 

The Uniformization Theorem endows $\partial_{\infty}\H^3 \backslash \Lambda(\overline g)$ with a conformal $\rho_{\overline g}$-invariant hyperbolic metric, projecting to a conformal hyperbolic metric on $\partial_{\infty} N(\overline g)$. The following statement compares this metric structure with the induced intrinsic metric on $\partial C(\overline g)$:

\begin{thm}
\label{su}
There exist functions $a, b: (0, +\infty)\rightarrow (0, +\infty)$ such that if for $r_0>0$ the injectivity radius of $\partial C(\overline g)$ is at least $r_0$, then $\mu$ is $a(r_0)$-Lipschitz and has a $b(r_0)$-Lipschitz homotopy inverse. If $\partial M$ is incompressible, then $a,b$ can be chosen absolute constants.
\end{thm}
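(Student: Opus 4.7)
The plan is to lift everything to the universal cover and invoke the classical nearest-point retraction theorem of Sullivan (for which Epstein--Marden's monograph on convex hulls in hyperbolic space provides detailed proofs), then push back down under $\rho_{\ol g}$, using the injectivity radius hypothesis to compare distances upstairs and downstairs in the compressible case.

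First I would observe that $\mu$ lifts equivariantly to the nearest-point projection $\tilde\mu : \pt_\infty \H^3 \setminus \Lambda(\ol g) \to \pt \tilde C(\ol g)$, where the source carries the Poincar\'e hyperbolic metric on the domain of discontinuity. Sullivan's theorem asserts that $\tilde\mu$ is $\tilde a$-Lipschitz for an absolute constant $\tilde a$, and admits a Lipschitz homotopy inverse $\tilde\mu^{-1}$ with absolute constant $\tilde b$, defined by shooting each boundary point of $\tilde C(\ol g)$ a bounded distance outward along a normal ray to some supporting plane. Both Lipschitz statements refer to the intrinsic path metric on $\pt \tilde C(\ol g)$, which is hyperbolic by the previous lemma.

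When $\pt M$ is incompressible, each component of $\pt \tilde C(\ol g)$ is the universal cover of a component of $\pt C(\ol g)$ and projects as a local isometry, and moreover there is no small-scale accidental shortening since the surface is embedded; hence $\mu$ and the equivariant descent of $\tilde\mu^{-1}$ inherit the absolute Lipschitz constants $\tilde a, \tilde b$. In the compressible case the covering $\pt \tilde C(\ol g) \to \pt C(\ol g)$ is still a local isometry, but two preimages of one short arc may be joined in $\pt \tilde C(\ol g)$ by a chord shorter than the corresponding loop downstairs. The injectivity radius hypothesis $r_0$ quantifies exactly this discrepancy: for points of $\pt C(\ol g)$ within distance less than $r_0$, the lift to $\pt \tilde C(\ol g)$ preserves distances, so the absolute constants still apply below scale $r_0$; a chaining argument subdividing any geodesic arc into segments of length less than $r_0/2$ and applying the local bound to each segment upgrades this to global bounds $a(r_0), b(r_0)$.

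The main obstacle will be ensuring that the descended homotopy inverse is globally well-defined in the compressible case. One must choose outward normal rays in an equivariant, e.g.\ measurable, way, and verify that the resulting map $\pt C(\ol g) \to \pt_\infty N(\ol g)$ is actually homotopic to a genuine inverse of $\mu$ rather than merely being a set-theoretic right inverse locally. The straight-line homotopy along normal rays handles the homotopy automatically upstairs; the injectivity radius bound again prevents this homotopy from colliding with itself after projection. Once these pieces are in place, the constants $a(r_0), b(r_0)$ are explicit expressions in $\tilde a, \tilde b$ and $1/r_0$ coming from the chaining step.
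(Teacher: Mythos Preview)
The paper does not prove Theorem~\ref{su}; it is quoted from the literature, with Sullivan~\cite{Sul} and Epstein--Marden~\cite{EM} cited for the incompressible case and~\cite{BC} for the general case. Your sketch therefore attempts more than the paper does, but it contains a genuine misdiagnosis of where the difficulty lies in the compressible case.

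The error is the claim that Sullivan's theorem yields absolute Lipschitz constants for $\tilde\mu$ upstairs regardless of the topology of the components of $\partial_\infty\H^3\setminus\Lambda(\overline g)$. The Sullivan/Epstein--Marden result is specifically about \emph{simply connected} domains $\Omega\subset\partial_\infty\H^3$: it compares the Poincar\'e metric on such an $\Omega$ with the intrinsic hyperbolic metric on the dome $\partial\,\conv(\partial_\infty\H^3\setminus\Omega)$, and the proof relies on Koebe-type distortion estimates that are unavailable otherwise. When $\partial M$ is incompressible, each component of the domain of discontinuity is a topological disk and the theorem applies directly. When $\partial M$ is compressible, the components are infinitely connected, and the bilipschitz comparison with absolute constants fails already at this level; there is no way to pass to a further abstract universal cover while retaining the ambient $\H^3$ geometry on which the nearest-point retraction depends. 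The dependence on the injectivity radius enters precisely here, in the analysis of the retraction on a non-simply-connected planar domain, and this is the content of~\cite{BC}.

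Your explanation that the $r_0$-dependence arises during descent is backwards. Lipschitz constants always pass to equivariant isometric quotients: if $\tilde\mu$ were $\tilde a$-Lipschitz, then for any lifts $\tilde x,\tilde y$ and any deck transformation $\gamma$ one has
\[
d_{\partial C}(\mu(x),\mu(y))\le d_{\partial\tilde C}\bigl(\tilde\mu(\tilde x),\gamma\tilde\mu(\tilde y)\bigr)\le\tilde a\,d_{\tilde\Omega}(\tilde x,\gamma\tilde y),
\]
and infimizing over $\gamma$ gives the same constant $\tilde a$ downstairs. So the chaining step you describe is solving a non-problem, while the actual analytic work needed in the compressible case is missing from your outline.
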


It was first sketched by Sullivan in~\cite{Sul} for the case of incompressible boundary, a more detailed proof can be found in~\cite{EM}. The case of general boundary was obtained in~\cite{BC}.

Finally, we need a well-known compactness criterion for subsets of the Teichm\"uller space (complementing the \emph{Mumford Compactness Theorem}).

\begin{thm}
\label{flp}
Let $S$ be a closed oriented surface. There exists a finite set of isotopy classes of simple closed curves $\{\gamma_1, \ldots, \gamma_l\}$ such that for any positive number $C$ the set of hyperbolic metrics, having geodesic representatives of all $\gamma_i$ with lengths at most $C$, is compact in the Teichm\"uller space $\mc{TS}(S)$ of $S$.
\end{thm}

It follows straightforwardly from Lemma 7.10 and Proposition 7.11 in~\cite{FLP}.

\begin{proof}[Proof of Lemma~\ref{compactind}.]
Because $d_i^I$ converge uniformly to $d^I$, the length of a smallest curve in each free homotopy class of closed curves on $\partial M$ is uniformly bounded from above in $\{d_i^I\}$. Due to the hyperbolic Busemann--Feller Lemma~\cite{Mil}, this implies that they are also uniformly bounded from above in $\partial C(\overline g_i)$. Theorem~\ref{flp} implies that the injectivity radii of $\partial C(\overline g_i)$ are uniformly bounded from below. Due to Theorem~\ref{su}, the length of a smallest curve in each free homotopy class is also uniformly bounded in the hyperbolic representatives of conformal structures in $\partial_{\infty}N(\overline g_i)$. Due to Theorem~\ref{ab} and Theorem~\ref{flp}, this means that $\overline g_i$ stay in a compact subset of $\mathcal{CH}(N)$, and thus, up to taking a subsequence, converge to $\overline g \in \mathcal{CH}(N)$.
\end{proof}

Now we can finish the proof of Lemma~\ref{proper}.

\begin{proof}
Let $v \in V$ and $f_{v,i}$ be the immersed plane in $N(\overline g_i)$ containing the face of $M(g_i)$ marked by $v$. We denote by $h_{v, i}$ the distance from $f_{v, i}$ to $C(\overline g_i)$. We see that $h_{v, i}$ is bounded from above. Indeed, otherwise the diameters of the induced intrinsic metrics on $\partial M(g_i)$ grow to infinity, which contradicts to the fact that they converge to a cone-metric.

Thus, up to taking a subsequence, all $f_{v,i}$ converge to some immersed planes $f_v$ in $N(\overline g) \backslash \inter(C(\overline g))$. Hence, polyhedral surfaces $\partial M(g_i)$ converge to polyhedral surfaces in $N(\overline g)$ bounding together a totally convex set. It is compact, as otherwise, again, the diameters of $d_i^I$ would grow to infinity. Thus, it is homeomorphic to $M$, and we denote it by $M(g)$.

We need to show that $f_{v, i}$ are bounded from below. To see this, let us pass to the universal cover. The surface of $\partial \tilde C(\overline g)$ consists of totally geodesic pieces bounded by lines, which extend to the boundary at infinity. Hence, if a face of $\partial \tilde M(g)$ falls to $\partial \tilde C(\overline g)$, then this face contains a line in $\H^3$. Let $\tilde v \subset \partial M(g)^*$ be the vertex dual to this face. Then there are faces of $\tilde M(g)^*$ adjacent to $\tilde v$ that are touching $\partial_{\infty}^+ \dS^3$. The proof of Lemma~\ref{dualconv} shows that this can not happen. Then $g_i$ converge to $g \in \ol{\mathcal P}(M, V)$. It is evident that $V(g)=V(d)$.



\end{proof}

\section{Final steps}
\label{fsec}

\subsection{More topological preliminaries}
\label{bundlesec}

To finish the proof, we need to examine connectivity properties of the spaces $\mc P(M, V)$ and $\mc D_M(\pt M, V)$. It turns out that our purposes require more machinery.
Denote the forgetful map $\mc{CH}(N, V) \ra \mc{CH}(N)$ by $\sigma_{V}$. The first goal of this section is to show that it is a fiber bundle of a special type. We need to introduce yet another space. 

Let $\mc{PCH}(N)$ be the space of \emph{pointed} convex cocompact hyperbolic metrics on $N$, i.e., the space of pairs $(\ol g, p)$ where $\ol g \in \mc {CH}(N)$ and $p \in N(\ol g)$ is a marked point, considered up to isometry sending the marked point to the marked point and isotopic to the identity. Denote by $\mc{PCH}_j(N)$ the subset of $\mc{PCH}(N)$ where $p$ belongs to the geometric end $E_j$ corresponding to the $j$-th component $S_j$ of $\pt M$. First, we introduce the structure of a fiber bundle on $\mc{PCH}_j(N)$ with respect to the forgetful map $\sigma_j: \mc{PCH}_j(N) \ra \mc{CH}(N)$. For $\ol g \in \mc {CH}(N)$ denote by $\psi_j: E_j \ra \R_+$ the distance from the marked point to the convex core (so this function depends on $\ol g$, but we will suppress this in notation). It is $C^1$ (\cite[Lemma 1.3.6]{EM}) and the level surfaces are strictly convex.

\begin{lm}
\label{fbundle}
There exists a fiber bundle atlas $\{(U, \phi_{U})\}$, $U \subset \mc{CH}(N)$, for $$\sigma_j: \mc{PCH}_j(N) \ra \mc{CH}(N)$$ with
$$\phi_{U}: \sigma^{-1}_j(U) \rightarrow U \times S_j \times \R_+$$
such that \\
(1) for every $\ol g \in U, r \in \R_+,$ the set $\phi_{U}^{-1}(\ol g, S_j, r)$ is the $r$-level surface of $\psi_j$ in $E_j \subset N(\ol g)$;\\
(2) for every $\ol g \in U, x \in S_j,$ the set $\phi_{U}^{-1}(\ol g, x, \R_+)$ is a gradient line of $\psi_j$.
\end{lm}

\begin{proof}
Pick $\ol g \in \mc{CH}(N)$, fix a developing map $\tilde N (\ol g) \cong \H^3$ and the respective holonomy map $\rho$. Recall that $\nu_j: \pi_1(S_j) \ra \pi_1(M)$ is the push-forward homomorphism and $D_j$ is the covering space of $S_j$ with respect to $\ker(\nu_j)$. We denote by $\rho_j$ the restriction of $\rho$ to $\im(\nu_j)$. It fixes a geometric end $\tilde E$ of $\tilde N(\ol g)$ corresponding to a lift of $S_j$. Consider an embedding $D_j \times [1/2, 3/2] \ra \tilde E$, where $D_j \times \{r\}$ is embedded as the surface at distance $r$ from $\tilde C(\ol g)$. These surfaces are strictly convex.

By the Ehresmann--Thurston Theorem~\cite[Theorem 1.7.1]{CEG}, there exists a small neighborhood $\tilde U_j$ of $\rho_j$ in $\mc R(\im(\nu_j), G)$ and a continuous map \[\hat\dev: \tilde U_j \times D_j \times [1/2, 3/2] \ra \H^3\] such that for every $\rho'_j \in \tilde U_j$ and $r \in \R_+$ the map $\hat\dev(\rho'_j, ., r)$ is $\rho'_j$-equivariant. We are actually interested only in its restriction to $\tilde U_j \times D_j \times \{1\}$, so we abuse the notation and denote this restriction by $\hat \dev: \tilde U_j \times D_j \ra \H^3$. Provided that $\tilde U_j$ is sufficiently small, the surfaces $\hat\dev(\rho'_j, D_j)$ are strictly convex. We extend $\tilde U_j$ to an arbitrary neighborhood $\tilde U$ of $\rho$ in $\mc R(\pi_1(M), G)$ (i.e., the projection of $\tilde U$ to $\mc R(\im(\nu_j), G)$ is $\tilde U_j$), and extend $\hat \dev$ trivially to $\tilde U$. Thereby, now we consider $\hat \dev$ as a map from $\tilde U \times D_j$ to $\H^3$. We assume that $\tilde U$ is sufficiently small so that it contains only holonomies of convex cocompact metrics on $N$.

For $\rho' \in \tilde U$ we denote by $\tilde C(\rho')$ the convex hull of its limit set. Provided that $\tilde U$ is sufficiently small, all surfaces $\hat\dev(\rho', D_j)$ are at positive distance from $\tilde C(\rho')$. We denote by $\tilde E(\rho')$ the connected component of $\H^3 \backslash \tilde C(\rho')$ containing this surface, and by $A$ we denote the set of pairs $(\rho', p): \rho' \in \tilde U, p \in \tilde E(\rho')$. Consider the function $\psi_A: A \ra \R_+$ sending $(\rho', p)$ to the distance from $p$ to $\tilde C(\rho')$, and the map $A \ra \H^3$ sending $(\rho', p)$ to the closest point on $\tilde C(\rho')$ to $p$. We claim that these maps are continuous. Indeed, when a sequence $\rho_i$ converges to $\rho'$ in $\tilde U$, the sets $\tilde C(\rho_i)$ converge to $\tilde C(\rho')$ in the Hausdorff sense, see~\cite[Lemma 3.27]{Pro3}. This easily implies the claim. We now define a map $\dev: \tilde U \times D_j \times \R_+ \ra \H^3$ that sends $(\rho', x, r)$ to the point on the same gradient line of $\psi_A(\rho', .)$ as $\hat\dev(\rho', x)$ and that is at distance $r$ from $\tilde C(\rho')$. By our claim, this map is continuous. 

Now we consider the map ${\rm id} \times \dev: \tilde U \times D_j \times \R_+ \ra \tilde U \times \H^3$. It is a homeomorphism onto $A$.  (Here we rely on the strict convexity of the surfaces $\hat\dev(\rho', D_j)$, as it implies that every gradient line of $\psi_A(\rho', .)$ intersects $\hat\dev(\rho', D_j)$ exactly once.) It is equivariant with respect to the fiberwise actions of $\im(\nu_j)$ on $D_j$ and $\H^3$. Denote by $U$ the projection of $\tilde U$ to $\mc X(\pi_1(M), G)$. The map ${\rm id} \times \dev$ projects to a homeomorphism from $U \times S_j \times \R_+$ onto $\sigma^{-1}_j(U)$. The inverse of this homeomorphism is a chart of the desired form.
\end{proof} 

By Theorem~\ref{ab}, $\mc{CH}(N)$ is contractible, hence the bundle from Lemma~\ref{fbundle} admits a trivialization
\begin{equation}
\label{triv_j}
\mc{PCH}_j(N) \cong \mc{CH}(N) \times S_j \times \R_+.
\end{equation}
In what follows we fix such a trivialization.

Now suppose that $V=\{v\} \subset S_j$. The space $\mc{PCH}_j(N)$ is naturally homeomorphic to $\mc{CH}(N, V)$: we send the pair $(\ol g, p) \in \mc{PCH}_j(N)$ to the unique immersed positive plane tangent to the level surface of $\psi_j$ at $p$. Moreover, the composition of $\sigma_V$ with this homeomorphism is $\sigma_{j}$. Hence, Lemma~\ref{fbundle} establishes a fiber bundle structure on $\mc {CH}(N, V)$ with respect to $\sigma_{V}$. 

We proceed to the case of general $V$. Define $V_j:=V \cap S_j$, $n_j:=|V_j|$. We denote by $S_{V}$ the product of $S_j^{V_j}$ where $j=1\ldots m$ and $m$ is the number of components of $\pt M$. Using the natural homeomorphisms $\mc{CH}(N, \{v\}) \cong \mc{PCH}_j(N)$, we endow $\mc{CH}(N, V)$ with the structure of a fiber bundle over $\mc{CH}(N)$ that is the product of $n_j$-th powers of the bundles $\mc{PCH}_j(N)$ where $j=1\ldots m$. The trivializations (\ref{triv_j}) induce a trivialization
\begin{equation*}
\mc{CH}(N, V) \cong \mc{CH}(N) \times S_{V} \times \R_+^V.
\end{equation*}

Now our goal is to construct a covering $\mc P^\sharp(M, V)$ of $\mc P(M, V)$, over which we will define the dual metric map $\mc I^\sharp_V$ with values in $\mc D^\sharp_M(\pt M, V)$. The elements $\mc P^\sharp(M, V)$ will be basically those of $\mc P(M, V)$, but equipped with an additional piece of data, which will allow us to construct a homeomorphism from the ``dual boundary'' onto $(\pt M, V)$.

We denote by $S_j^{\ast V_j}$ the subset of $S_j^{V_j}$ consisting of the injective maps $V_j \hookrightarrow S_j$. Denote by $S_{\ast V}$ the product of $S_j^{\ast V_j}$. Denote by $\mc{CH}^\ast(N, V)$ the subspace of $\mc{CH}(N, V)$ corresponding to the configurations where no two marked planes have their closest points to $C(\ol g)$ on the same gradient line of $\psi_j$. We have a trivialization
\begin{equation}
\label{triv}
\mc{CH}^\ast(N, V) \cong \mc{CH}(N) \times S_{\ast V} \times \R_+^V.
\end{equation}
It is easy to see that $\ol{\mc P}(M, V)$ belongs to $\mc{CH}^\ast(N, V)$.

Finally, denote by $\mc{CH}^\sharp(N, V)$ the universal cover of $\mc{CH}^\ast(N, V)$. The trivialization (\ref{triv}) produces a trivialization
\begin{equation}
\label{trivtilde}
\mc{CH}^\sharp(N, V) \cong \mc{CH}(N) \times \tilde {S_{\ast V}} \times \R_+^V.
\end{equation}
Denote by $\mc P^\sharp(M, V)$ and $\ol{\mc P}^\sharp(M, V)$ the coverings of $\mc P(M, V)$ and $\ol{\mc P}(M, V)$ respectively in $\mc{CH}^\sharp(N, V)$.

Our goal now is to describe a map $\mc I^\sharp_V: \ol{\mc P}^\sharp(M, V) \ra \ol{\mc D}_M^\sharp(\pt M, V)$ lifting the map $\mc I_V$. Denote by $H_0(\pt M)$ the space of self-homeomorphisms of $\pt M$ isotopic to the identity, endowed with the compact-open topology. Recall that $H_0^\sharp(\pt M, V)$ is its subgroup consisting of self-homeomorphisms isotopic to the identity by an isotopy fixing $V$. The inclusion $V \hookrightarrow \pt M$ may be considered as an element of the configuration space $S_{\ast V}$. Fix its lift $\hat V$ to the universal cover $\tilde{S_{\ast V}}$. Every $h \in H_0(\pt M)$ determines a self-homeomorphism of $S_{\ast V}$, which, thanks to the homotopy lifting, determines a self-homeomorphism of $\tilde{S_{\ast V}}$. By considering its evaluation at $\hat V$, we define a map $H_0(S) \ra \tilde{S_{\ast V}}$, which factors through $H_0^\sharp(S, V)$ to the evaluation map $${\rm ev}: H_0(\pt M)/H_0^\sharp(\pt M, V) \ra \tilde{S_{\ast V}}.$$ It was shown in~\cite[Lemma 3.1]{FP} that it is a homeomorphism. We fix such a map (which depends only on the choice of $\hat V$). 

Let $\ol g \in \mc{CH}(N)$ and $\Sigma \subset N(\ol g)$ be a separating convex surface in the geometric end $E_j$ corresponding to $S_j$ (particularly, $\Sigma$ is at a positive distance from $C(\ol g)$). The set of the closest points to $C(\ol g)$ on the supporting planes to $\Sigma$ form another surface $\Sigma' \subset E_j$. By construction, no two points of $\Sigma'$ belong to the same gradient line of $\psi_j$. Thereby, trivialization (\ref{triv_j}) provides a homeomorphism $\chi: \Sigma' \ra S_j$. The dual metric of $\Sigma$ may be considered as a metric on $\Sigma'$. When some supporting planes to $\Sigma$ are marked by a set $V_j$, the set $V_j$ respectively mark points on $\Sigma'$. 

Pick now $g^\sharp \in \ol{\mc P}^\sharp(M, V)$,  let $(g, f) \in \ol{\mc P}(M, V)$ be its projection, and $\Sigma$ be the $j$-th component of $\pt M(g)$. Construct the surface $\Sigma'$ and the homeomorphism $\chi: \Sigma' \ra S_j$ as above. Consider the element of $\tilde {S_{\ast V}}$ associated to $g^\sharp$ by trivialization (\ref{trivtilde}). By applying the inverse of the chosen evaluation map ${\rm ev}$, we get a class in $H_0(S_j)/H_0^\sharp(S_j, V_j)$. Let $h \in H_0(S_j)$ be a representative in this class. The map $h^{-1} \circ \chi$ is again a homeomorphism from $\Sigma'$ to $S_j$. Let $f': V \ra \Sigma'$ be the marking map, which sends $v \in V$ to a point on $\Sigma'$ determined by $f$. By construction, $h^{-1} \circ \chi \circ f'$ sends every $v \in V$ to itself. Thereby, $h^{-1} \circ \chi$ associates the dual metric of $\Sigma$ to an element of $\ol{\mc D}^\sharp_M(S_j, V_j)$. By applying this procedure to each component of $\pt M(g)$, we obtain an element of $\ol{\mc D}^\sharp_M(\pt M, V)$, which we denote by $\mc I^\sharp_V(g^\sharp)$.

\begin{rmk}
\label{decktr}
The group of deck transformations of $\tilde{S_{\ast V}} \ra S_{\ast V}$ is naturally isomorphic to the pure braid group $B_0(\pt M, V)=H_0(\pt M, V)/H_0^\sharp(\pt M, V)$, see~\cite[Remark 3.5]{FP}. Thereby, it is also the group of deck transformations of the covering map $\ol{\mc P}^\sharp(M, V)\ra \ol{\mc P}(M, V)$. It is also the group of deck tranformations of the covering map $\ol{\mc D}^\sharp_M(\pt M, V) \ra \ol{\mc D}_M(\pt M, V)$. By construction, the map $\mc I^\sharp_V$ is equivariant with respect to $B_0(\pt M, V)$.
\end{rmk}

\begin{rmk}
Every geometric end $E_j$ of a convex cocompact manifolds $N(\ol g)$ is associated to a de Sitter spacetime $E^*_j$, which appears as the quotient of the subset of $\dS^3$ corresponding to the positive planes in a lift of $E_j$ to the universal cover. The function $\psi_j$ on $E_j$ determines the so-called \emph{cosmological time} on $E^*_j$. Instead of considering the surface $\Sigma'$, we may consider the surface $\Sigma^* \subset E^*_j$ dual to $\Sigma \subset E_j$. The induced metric on $\Sigma^*$ is exactly the dual metric of $\Sigma$. We, however, chose not to pursue this point of view, as some things become subtler from this perspective.
\end{rmk}


\subsection{Weak connectivity properties}

In this section we establish several weak connectivity properties of the spaces $\mc P^\sharp(M, V)$, $\mc D_M(\pt M, V)$ and $\mc D^\sharp_M(\pt M, V)$.

\begin{lm}
\label{connect}
Let $\hat g^\sharp_0$, $\hat g^\sharp_1 \in \mathcal P^\sharp(M, V)$. Then there exists $W \supseteq V$ and a path $g^\sharp_t \subset \overline{\mathcal P}^\sharp(M, W)$ for $t \in [0, 1]$ such that $g^\sharp_0$ and $g^\sharp_1$ are lifts of $\hat g^\sharp_0$ and $\hat g^\sharp_1$ respectively.
\end{lm}

\begin{proof}
The space $\mc{CH}^\sharp(N, V)$ is connected, so we connect $\hat g^\sharp_0$ and $\hat g^\sharp_1$ by a path $\hat g^\sharp_t$ in $\mc{CH}^\sharp(N, V).$ Denote by $(g_t, f_t) \in \mc{CH}(N, V)$ and by $\ol g_t \in \mc{CH}(N)$ the projections of $\hat g^\sharp_t$ to the respective spaces. Note that by definition of $\mc{CH}^\sharp(N, V)$, the marking maps $f_t$ are injective. The intersection of the immersed negative half-spaces of the planes $f_t(v)$, $v \in V$, is a totally convex set $K_t \subset N(\overline g_t)$. The boundary $\partial K_t$ never touches $C(\overline g_t)$ by construction. For $t=0,1$ the set $K_t$ coincides with $M(g_t)$. Moreover, near $t=0,1$ the set $K_t$ is homeomorphic to $M$ and $\hat g^\sharp_t \in \mathcal P^\sharp(M, V)$. We pick some $a, b \in (0,1)$, $a<b$, such that this holds for $t\in [0,a]$ and $t \in [b,1]$. What can go wrong with $K_t$ for $t \in [a,b]$? The issues are (1) a part of $K_t$ escapes outside $N(\ol g_t)$, so $K_t$ becomes non-compact; (2) a plane from $f_t(V)$ becomes not supporting to $K_t$.

Let us deal with the first issue. Let $E_t \subset N(\ol g_t)$ be the geometric end corresponding to the component $S_j$ of $\pt M$ and $\nu_j: \pi_1(S_j) \ra \pi_1(M)$ be the push-forward map. Develop the path $(g_t, f_t)$ to $\H^3$ and let $\rho_t$ be the holonomy map. Denote by $\Omega_t$ the component of $\pt_\infty \H^3 \backslash \Lambda(\rho_t)$ fixed by $\rho_t|_{\im(\nu_j)}$ and by $\tilde E_t$ denote the respective geometric end of $\tilde N(\ol g_t)$. Note that the map $t \mapsto \Lambda(\rho_t)$ is continuous with respect to the Hausdorff topology on the closed subsets of $\pt_\infty \H^3$, see~\cite[Theorem E]{AC}.
We choose a continuous family of compact fundamental domains $X_t$, $t\in [a,b]$, for $\Omega_t$, i.e., such that the map $t \mapsto X_t$ is continuous too. It is easy to deduce from the Hausdorff continuity that for every $t \in [a,b]$ there exist an interval $I_t \subset [a,b]$ and disjoint compact sets $Y_t$, $Z_t \subset \pt_\infty \H^3$ such that $I_t$ is open in $[a,b]$, contains $t$ and for every $t' \in I_t$ we have $X_{t'} \subset Y_t$ and $\Lambda(\rho_{t'}) \subset Z_t$. Pick a finite covering $I_{t_1}, \ldots, I_{t_r}$ of $[a, b]$ by such intervals $I_t$. For every $i=1, \ldots, r$ choose a covering of $Y_{t_i}$ by finitely many open disks in $\pt_\infty \H^3$ such that their closures are disjoint from $Z_{t_i}$. We now view all these disks as positive hyperbolic planes in $\tilde E_{t'}$, $t' \in I_{t_i}$. Consider their projection to $E_{t'}$. If necessary, we perturb them slightly so that no two planes have their closest points to $C(\ol g_{t'})$ on the same gradient line of the distance function to $C(\ol g_{t'})$. For the moment, every such plane exists only over the interval $I_{t_i}$, but we extend it arbitrarily continuously over all $[0,1]$ so that at $t=0,1$ the new planes are supporting to $K_t$. We do this for every $S_j$, and we mark these planes, together with the old ones, by a set $W \supseteq V$, extending the initial marking. Denote by $\hat K_t$ the modified sets $K_t$. By construction, every $\hat K_t$ is compact and homeomorphic to $M$.

We now need to resolve the second issue. To this purpose, for every $t$ we replace every plane $f_{t}(v)$ that is not supporting to $\hat K_t$ with the unique supporting plane on the same gradient line of the distance function to $C(\ol g_t)$. The obtained planes are moving continuously. This produces a path of metrics $g^\sharp_t \in \overline{\mathcal P}^\sharp(M, W)$ connecting $g^\sharp_0$ and $g^\sharp_1$ that are lifts of metrics $\hat g^\sharp_0$ and $\hat g^\sharp_1$.
\end{proof}


\begin{lm}
\label{connect1}
For every $V_0, V_1 \subset \pt M$ and $d_0 \in \mathcal D_M(\partial M, V_0)$, $d_1 \in \mathcal D_M(\partial M, V_1)$ there exist $W \supseteq (V_0 \cup V_1)$ and a path $\alpha: [0,1] \ra \ol{\mc D}_M(\pt M, W)$ such that $\alpha(0)$ and $\alpha(1)$ are lifts of $d_0$ and $d_1$ respectively, and that for all $t\in (0,1)$ we have $\alpha(t) \in \mc D_M(\pt M, W)$.
\end{lm}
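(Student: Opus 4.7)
The plan follows the local-to-global philosophy of Corollary~\ref{locconnect}: lift both metrics to a common chart on an enlarged vertex set, construct a connecting path inside $\overline{\mathcal D}^*_M(\partial M, W)$, and then apply the scaling operation $*_\mathcal{T}$ to push the interior of that path into $\mathcal D^*_M(\partial M, W)$. \textbf{Stage 1 (common chart).} By Lemma~\ref{triang}, pick triangulations $\mathcal T^{(i)}$ of $(\partial M, V)$ realized by $d^*_i$ for $i=0,1$. We add new regular marked points (each with cone-angle exactly $2\pi$) to both metrics in order to obtain an enlarged vertex set $W \supseteq V$ and a single combinatorial triangulation $\mathcal T$ of $(\partial M, W)$ that is simultaneously realized geodesically by isotopic representatives of both $d^*_0$ and $d^*_1$. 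Since the new points are regular in each metric, and since the isotopy equivalence in the definition of $\mathcal D^*$ lets us slide vertex positions freely inside $\partial M$, their positions may be placed in correspondence by isotoping $d^*_0$ and $d^*_1$ independently within their classes. The resulting lifts $\tilde d^*_i$ lie in $\overline{\mathcal D}^*_M(\partial M, W) \cap \mathcal D^*(\partial M, \mathcal T)$ with $V(\tilde d^*_i)=V$.

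\textbf{Stage 2 (path within $\overline{\mathcal D}^*_M$).} I would construct a continuous path $\beta:[0,1]\to \overline{\mathcal D}^*_M(\partial M, W) \cap \mathcal D^*(\partial M, \mathcal T)$ from $\tilde d^*_0$ to $\tilde d^*_1$. As noted by Schlenker in~\cite{Sch5}, this step is the natural generalization of the connectivity arguments of Hodgson--Rivin~\cite{HR}, who handled the case $\partial M=S^2$ (where largeness and largeness for $M$ coincide). Within the semi-analytic subset of concave large-for-$M$ metrics inside the convex chart, one interpolates while respecting both the concavity and the largeness bounds, possibly enlarging $W$ further so that the relevant semi-analytic subset becomes path-connected — in the spirit of the path built in the proof of Corollary~\ref{locconnect} but now in a global rather than local neighborhood.

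\textbf{Stage 3 (fattening).} Pick a smooth bump $\lambda:[0,1]\to[0,\varepsilon]$ with $\lambda(0)=\lambda(1)=0$ and $\lambda(t)>0$ for $t\in(0,1)$, and define
\[\alpha(t):=\lambda(t) *_{\mathcal T}\beta(t).\]
By Corollary~\ref{multipl} and the lemma preceding it, scaling edge-lengths by $e^{\lambda(t)}>1$ strictly enlarges every spherical-triangle angle, hence strictly increases every cone angle of $\alpha(t)$ — in particular at vertices of $W\setminus V$ where $\beta(t)$ had cone-angle $2\pi$. So $\alpha(t)\in\mathcal D^*_c(\partial M,\mathcal T)$ for $t\in(0,1)$. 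The largeness condition is maintained strictly since scaling monotonically stretches arcs across faces and we start from $\mathrm{ml}_M(\beta(t))\geq 2\pi$ along $\beta$; combined with compactness of $[0,1]$ and the lower semi-continuity of $\mathrm{ml}_M$ from Corollary~\ref{semicont}, a uniformly small $\varepsilon>0$ ensures $\mathrm{ml}_M(\alpha(t))>2\pi$ for all $t\in(0,1)$. Thus $\alpha(t)\in\mathcal D^*_M(\partial M, W)$ on $(0,1)$, while $\alpha(0)=\tilde d^*_0$ and $\alpha(1)=\tilde d^*_1$ are the desired lifts.

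The main obstacle will be Stage 2: establishing path-connectedness of the semi-analytic subset $\overline{\mathcal D}^*_M(\partial M, W) \cap \mathcal D^*(\partial M, \mathcal T)$ after possibly enlarging $W$. This is the actual generalization of the Hodgson--Rivin connectivity argument from the 2-sphere to a closed surface of higher genus, now constrained by the weaker condition of largeness for $M$ rather than largeness on $\partial M$; the straight segment between $\tilde d^*_0$ and $\tilde d^*_1$ inside the convex chart need not stay in $\overline{\mathcal D}^*_M$, so some care is needed in routing $\beta$. Stage 1 is combinatorially delicate (compatibility of isotopies for placing the new vertices in correspondence) but ultimately a standard PL refinement argument; Stage 3 is a direct global analogue of the scaling trick already used in the proof of Corollary~\ref{locconnect}.
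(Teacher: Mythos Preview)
Your Stage~2 is where the proposal breaks down, and it is precisely the step the paper handles by a completely different mechanism. You assert that one can ``interpolate while respecting both the concavity and the largeness bounds'' inside a single chart $\mathcal D^*(\partial M,\mathcal T)$, citing Hodgson--Rivin, but this is not what the Hodgson--Rivin argument does. In~\cite{HR} (and in the paper's sketch) one leaves the world of spherical cone-metrics entirely: the endpoints are first continuously smoothed to honest Riemannian metrics, then joined by a path in the convex space of Riemannian metrics on $\partial M$; each metric along this path is multiplied by a large scalar so that its sectional curvature is below $1$ and ${\rm ml}_M>2\pi$ (using ${\rm ml}_M(\lambda d)=\lambda\,{\rm ml}_M(d)$ for Riemannian $d$); finally one chooses a single \emph{very fine} triangulation $\mathcal T$ realized geodesically in every metric of the path and replaces each triangle by the spherical one with the same edge-lengths. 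Concavity comes from comparison geometry (curvature $<1$), and largeness for $M$ survives by the lower semicontinuity of ${\rm ml}_M$. The set $W$ and the triangulation $\mathcal T$ arise from this approximation step, not from a common refinement of triangulations of $d^*_0$ and $d^*_1$ as in your Stage~1.

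By contrast, your plan asks for path-connectedness of the semi-analytic set $\overline{\mathcal D}^*_M(\partial M,W)\cap\mathcal D^*(\partial M,\mathcal T)$ for some a priori chosen $W$ and $\mathcal T$, and offers no argument beyond ``possibly enlarging $W$ further.'' There is no reason this set should be connected: the largeness and concavity constraints cut out a potentially very complicated region of the edge-length polytope, and the straight segment (or any naive interpolation) can fail both. The scaling trick $*_{\mathcal T}$ from Corollary~\ref{locconnect} only repairs the strictness of concavity at added points; it does not help you route around obstacles to largeness globally. (Also note: your remark that $*_{\mathcal T}$ ``monotonically stretches arcs across faces'' is not correct --- geodesics in the new cone-metric are not the $e^{\lambda}$-scaled old geodesics --- though your subsequent compactness/lower-semicontinuity argument for Stage~3 is fine once Stage~2 is granted.) The missing idea is the detour through smooth Riemannian metrics and the global rescaling of the metric tensor, which is what actually buys both curvature control and the ${\rm ml}_M$ bound simultaneously along the whole path.
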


\begin{lm}
\label{connect2}
For every continuous map $\beta: S^1 \rightarrow \overline{\mathcal D}^\sharp _M(\partial M, V)$ there exist $W \supseteq V$, a triangulation $\mc T$ of $(\pt M, W)$ and a continuous map $\alpha: \overline D^2 \rightarrow \ol{\mc D}^\sharp_M(\pt M, \mc T)$ of the closed 2-disk $\overline D^2$ with the boundary identified to $S^1$ such that $\alpha|_{S^1}$ is a lift of $\beta$, and that $\alpha(D^2) \subset \mathcal D^\sharp_M(\partial M, \mc T)$, where $D^2$ is the interior of $\overline D^2$.
\end{lm}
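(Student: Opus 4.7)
My strategy is to adapt the one-dimensional technique of Corollary~\ref{locconnect} and Lemma~\ref{connect1} to two dimensions, in three steps: lift $\beta(S^1)$ into a single chart $\mc D^*(\pt M, \mc T)$ by enlarging $V$ to $W$, cone to extend across the disk inside that (convex) chart, and scale to push the interior into $\mc D^*_M(\pt M, W)$.

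\emph{Step 1 (single chart).} By compactness of $S^1$ and Lemma~\ref{fintriang}, $\beta(S^1)$ is covered by finitely many open sets each contained in some chart $\mc D^*(\pt M, \mc T_i)$ for a triangulation $\mc T_i$ of $(\pt M, V)$. Pick simple curve representatives for all edges of all the $\mc T_i$ in general position; letting $W$ consist of $V$ together with all pairwise intersection points, the overlay gives a cell decomposition of $(\pt M, W)$, whose triangulation $\mc T$ refines every $\mc T_i$. After a continuous family of ambient isotopies (which do not alter the isotopy classes of the metrics), every $\beta(t)$ realizes $\mc T$, producing a continuous lift $\tilde\beta : S^1 \to \mc D^*(\pt M, \mc T) \cap \ol{\mc D}^*_M(\pt M, W)$.

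\emph{Step 2 (extension and scaling).} Because $\mc D^*(\pt M, \mc T)$ is a convex open polyhedron in $\R^{E(\mc T)}$ cut out by strict triangle inequalities and the conditions $l_e < \pi$, coning $\tilde\beta$ to a chosen interior point $c \in \mc D^*_M(\pt M, W) \cap \mc D^*(\pt M, \mc T)$ produces a continuous map $\tilde\alpha : \ol D^2 \to \mc D^*(\pt M, \mc T)$. To force the interior into $\mc D^*_M(\pt M, W)$, I then set $\alpha(r, \theta) := \lambda(r) *_{\mc T} \tilde\alpha(r, \theta)$ for a continuous $\lambda : [0, 1] \to [0, \e]$ with $\lambda(1) = 0$ and $\lambda(r) > 0$ on $[0, 1)$. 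For $\e$ sufficiently small, Corollary~\ref{multipl} turns any concave metric into a strictly concave one with $V(d) = W$, the lower-semicontinuity of ${\rm ml}_M$ from Corollary~\ref{semicont} preserves the largeness condition, and the bound $l_e < \pi$ is respected.

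\emph{Main obstacle.} The scaling step only succeeds if $\tilde\alpha(\ol D^2) \subset \ol{\mc D}^*_c(\pt M, W)$, but concavity is \emph{not} a convex condition in the chart coordinates, so the straight cone may leave the concave locus, and scaling does not repair this. I would address it by a two-phase construction: first pre-scale $\tilde\beta$ by a small $\lambda_0 > 0$ to obtain a circle $\tilde\beta' := \lambda_0 *_{\mc T} \tilde\beta$ already inside the open set $\mc D^*_M(\pt M, W) \cap \mc D^*(\pt M, \mc T)$, and fill the annulus between $\tilde\beta$ and $\tilde\beta'$ with the scaling homotopy $s \mapsto (s\lambda_0) *_{\mc T} \tilde\beta$, whose interior lies in $\mc D^*_M$; then fill the inner disk bounded by $\tilde\beta'$ by subdividing that circle into finitely many short arcs, connecting each subdivision point to the center $c$ by a path given by Lemma~\ref{connect1} (after enlarging $W$ again if necessary), and gluing the resulting pieces into a disk via a nerve-type argument. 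This gluing inside the non-convex set $\mc D^*_M(\pt M, W) \cap \mc D^*(\pt M, \mc T)$ is the heart of the argument and its main technical difficulty.
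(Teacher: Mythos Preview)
Your approach diverges substantially from the paper's and has gaps at both steps.

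\textbf{Step 1.} The overlay construction does not produce a triangulation $\mc T$ realized by every $\beta(t)$. If $\beta(t)$ realizes $\mc T_i$, the edges of $\mc T_i$ have geodesic representatives in $\beta(t)$, but edges of $\mc T$ coming from some other $\mc T_j$ need not have geodesic representatives of length $<\pi$ in $\beta(t)$. Ambient isotopies cannot fix this: ``realizing $\mc T$'' is a metric condition on isotopy classes of arcs, and you would have to show that every edge-class of $\mc T$ contains a short geodesic in every $\beta(t)$. The overlay gives no such control.

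\textbf{Step 2.} You correctly identify that concavity is not convex in chart coordinates, so the straight cone may leave $\ol{\mc D}^*_c$. Your proposed fix---pre-scale the boundary circle into $\mc D^*_M(\pt M, W)$, then fill the inner disk by gluing paths from Lemma~\ref{connect1} via a ``nerve-type argument''---is exactly the substance of the lemma and is not carried out. Each invocation of Lemma~\ref{connect1} may require a further enlargement of $W$ and its own triangulation; patching the resulting paths into a disk inside the a priori non-simply-connected set $\mc D^*_M(\pt M, W)\cap \mc D^*(\pt M, \mc T)$ is not routine, and a nerve argument alone does not supply the missing simple connectivity.

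\textbf{The paper's route.} The paper avoids both issues by temporarily leaving the space of spherical cone-metrics. It smooths the loop $\beta$ to a loop of Riemannian metrics with conical singularities, uses the contractibility of the space of Riemannian metrics on $\pt M$ to fill the disk there, then rescales the metrics over the disk (by a factor equal to $1$ near $S^1$) so that sectional curvature is $<1$ and ${\rm ml}_M>2\pi$. Finally it approximates everything by spherical cone-metrics via one sufficiently fine geodesic triangulation $\mc T$ with vertex set $W$, replacing each small triangle by the spherical triangle with the same side lengths; concavity of the approximants comes from comparison geometry, and largeness for $M$ from the semicontinuity of ${\rm ml}_M$ (Corollary~\ref{semicont}). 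The simple connectivity you need is available one level up, in the space of smooth metrics; trying to manufacture it inside a single chart of spherical cone-metrics is what creates your difficulties.
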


We note that Lemma~\ref{connect2} does not hold for spaces $\ol{\mc D}_M(\pt M, V)$. The reason is that not every loop in $\ol{\mc D}_M(\pt M, V)$ lifts to a loop in $\ol{\mc D}^\sharp_M(\pt M, V)$, and hence to a loop in $\ol{\mf D}_M(\pt M, V)$, while every loop in $\ol{\mc D}^\sharp_M(\pt M, V)$ lifts to a loop in $\ol{\mf D}_M(\pt M, V)$. This is actually the only reason why we introduce the spaces $\ol{\mc D}^\sharp_M(\pt M, V)$ and $\ol{\mc P}^\sharp(M, V)$. Lemma~\ref{connect1} obviously holds for spaces $\ol{\mc D}^\sharp_M(\pt M, V)$ too.

Denote by $\mathcal D^\sharp_{cl}(S, V)$ the space of isotopy classes of concave large spherical cone-metrics $d$ on $(S, V)$ with $V(d)=V$. Lemma~\ref{connect1} was first proven in~\cite[Section~9]{HR} for the spaces $\mathcal D^\sharp_{cl}(S, V)$ when $S$ is the 2-sphere.  Then Lemmas~\ref{connect1} and~\ref{connect2} for $\mathcal D^\sharp_{cl}(S, V)$ and arbitrary closed $S$ appeared in a manuscript~\cite{Sch5} of Schlenker. Brief proof sketches were given there. Basically, Schlenker noticed that a proof of Lemma~\ref{connect1} can be given exactly in the same way as it was done for spheres in~\cite{HR}. For Lemma~\ref{connect2} Schlenker remarks that a proof is an easy generalization of the proof of Lemma~\ref{connect1}. We claim that these results hold straightforwardly also for the spaces $\mathcal D^\sharp_M(\partial M, V)$. Indeed, we only need to modify the proofs to gain control of the smallest geodesic that is contractible not just in $\partial M$, but in $M$. For a metric $d$ on $S$ by ${\rm ml}(d)$ we denote the infimum of lengths of closed geodesics in $(S, d)$ that are contractible in $S$. The control on ${\rm ml}(d)$ in~\cite{HR} follows from two facts. The first is that if $\lambda>0$ is a number and $d$ is a Riemannian metric on $S$, then ${\rm ml}(\lambda d)=\lambda {\rm ml}(d)$, which clearly holds also for the function ${\rm ml}_M$. The second fact is that ${\rm ml}$ is upper-semicontinuous in the Lipschitz topology. For ${\rm ml}_M$ this was shown in Corollary~\ref{semicont}.

We now provide proof sketches of Lemmas~\ref{connect1} and~\ref{connect2}. All the details can be reconstructed with the help of~\cite{HR}.

\begin{proof}
We lift our initial classes of metrics ($d_0$ and $d_1$ for Lemma~\ref{connect1} or $\beta(S^1)$ for Lemma~\ref{connect2}) to $\mf D_M(\pt M, V)$ (where $V=V_0$ for $d_0$ and $V=V_1$ for $d_1$ in Lemma~\ref{connect1}). For the loop $\beta$ this is possible because $\mf D(\pt M, V) \cong \mc D^\sharp(\pt M, V) \times H^\sharp_0(\pt M, V)$, see Section~\ref{conemetsec}. We continuously smoothen these metrics in the class of Riemannian metrics with conical singularities and obtain in the end a pair or a loop of Riemannian metrics. As the set of Riemannian metrics on $S$ is connected and simply connected, we can connect our pair of smooth metrics by a path or we can contract the loop. Now we multiply each metric of our path or our disk by a scalar (equal 1 near the boundary of the path or of the disk) to achieve that the sectional curvature is bounded from above by 1 and that the length of the shortest geodesic loop in $\partial M$ that is contractible in $M$ is greater than some constant greater than $2\pi$. This is easy to perform due to the compactness of the path or the disk. For the involved cone-metrics we also take care that the cone-angles stay greater than $2\pi$ during our procedure. 

We approximate the obtained metrics by spherical cone-metrics. To this purpose one constructs a very small triangulation $\mc T$ of $\partial M$ with a vertex set $W$, which can be geodesically realized in all involved metrics. Next, for each metric we replace each triangle by a spherical triangle with the same lengths. If the triangulation was sufficiently fine, then the distances in the obtained cone-metrics got distorted by at most $\e$ comparing with the approximated metrics for an arbitrarily small globally defined $\e>0$. From comparison geometry one shows that the resulting cone-metrics are concave. Provided that $\e$ is sufficiently small, it also follows from the upper semi-continuity of the function ${\rm ml}_M$ that the length of the shortest geodesic in $\partial M$ that is contractible in $M$ remains greater than $2\pi$.
\end{proof}

It remains to make a few easy remarks on the topological boundary of $\mc D_c(\pt M, V)$ in $\mc D(\pt M, V)$. Let $d^\sharp \in \mc D^\sharp(\pt M, \mc T)$ and $\lambda \in \R$ be a number. We denote by $(\lambda *_{\mc T} d^\sharp)$ a metric obtained by changing the length $l_e$ of each edge $e \in E(\mc T)$ to $e^\lambda l_e$ if the lengths remain staying $<\pi$ under this operation (otherwise the operation is not defined). We will use some corollaries of the following easy lemma

\begin{lm}
Let $T$ be a spherical triangle with side lengths $a$, $b$ and $c$ less than $\pi$. Consider a spherical triangle $T'$ with the lengths $e^\lambda a$, $e^\lambda b$, $e^\lambda c$ for $\lambda > 0$ such that the side lengths of the new triangle are $<\pi$. Then the angles of $T'$ are bigger than the respective angles of $T$.
\end{lm}

The proof is identical to the proof of a similar lemma for hyperbolic triangles (\cite[Lemma 2.3.9]{Pro2}) with all inequalities reversed.

\begin{crl}
\label{multipl}
Let $d^\sharp \in \ol{\mc D}^\sharp_c(\pt M, \mc T)$ and $\lambda>0$ be sufficiently small. Then $(\lambda *_{\mc T} d^\sharp) \in \mc D^\sharp_c(\pt M, \mc T)$.
\end{crl}

This is straightforward since the only difference between $\ol{\mc D}^\sharp_c(\pt M, \mc T)$ and $\mc D^\sharp_c(\pt M, \mc T)$ is that in the former the cone-angles of some points of $V$ may be $2\pi$.

\begin{crl}
\label{locconnect}
Let $d \in \ol{\mc D} _M(\pt M, V)$. There exists an open neighborhood $U$ of $d$ in $\ol{\mc D} _M(\pt M, V)$ such that $U \cap {\mc D} _M(\pt M, V)$ is connected.
\end{crl}

\begin{proof}
Since the forgetful map $\ol{\mc D}^\sharp _M(\pt M, V) \ra \ol{\mc D} _M(\pt M, V)$ is a covering map, it is enough to prove Lemma~\ref{locconnect} for the space $\ol{\mc D}^\sharp _M(\pt M, V)$.
If $d^\sharp \in {\mc D}^\sharp _M(\pt M, V)$, then the claim is trivial. Suppose that $d^\sharp \notin {\mc D}^\sharp _M(\pt M, V)$. Note that $\ol{\mc D}^\sharp _c(\pt M, V)$ is locally determined by finitely many non-strict analytic inequalities in the manifold $\mc D^\sharp(\pt M, V)$. Hence, $\ol{\mc D}^\sharp _c(\pt M, V)$ is locally connected. Choose a triangulation $\mc T$ and a connected open neighborhood $U$ of $d^\sharp$ in $\ol{\mc D}^\sharp _c(\pt M, \mc T)$. Since ${\rm ml}_M(d^\sharp)>2\pi$ and ${\rm ml}_M$ is upper-semicontinuous, we can assume that $U \subset \ol{\mc D}^\sharp _M(\pt M, \mc T)$. Pick any two metrics $d_0, d_1 \in U \cap {\mc D}^\sharp _M(\pt M, V)$ and connect them by a path $d^\sharp_t$, $t \in [0,1]$, in $U$. Choose a function $\lambda: [0,1] \ra [0, \e]$ for some small $\e>0$ such that $\lambda(0)=\lambda(1)=0$ and $\lambda(t)>0$ for $t \in (0,1)$. Then, provided that $\e$ is sufficiently small, the path $(\lambda(t) *_{\mc T} d^\sharp_t)$ connects $d^\sharp_0$ and $d^\sharp_1$ in $U \cap {\mc D}^\sharp _M(\pt M, V)$.
\end{proof}



\subsection{End of the proof}

Now we can collect together the proof of the main result.

\begin{proof}[Proof of Theorem~\ref{mtr}]
Suppose that $\mc P(M, V)$ is non-empty. Consider the restriction of the map $\mc I_V$ to $\mathcal P(M, V)$. From Lemma~\ref{differ}, it is $C^1$. Due to Lemma~\ref{infrig}, the differential $d\mathcal I_V$ is non-degenerate. By the Inverse Function Theorem, $d\mathcal I_V$ is locally injective. Since the manifolds $\mathcal P(M, V)$ and $\mathcal D_M(\partial M, V)$ have the same dimensions, by Brouwer's Invariance of the Domain Theorem, the restriction of $\mathcal I_V$ is a local homeomorphism. Due to Lemma~\ref{proper}, it is proper. Thus, it is a covering map. We highlight that we can not really show the connectedness of $\mathcal P(M, V)$ and $\mathcal D_M(\partial M, V)$, so we take into account that it is a covering map between possibly disconnected spaces.

By Lemma~\ref{nonempty}, there exists some $V_0$ such that $\mc P(M, V_0)$ is non-empty. Let $d_0 \in \mc D_M(\pt M, V_0)$ be in the image of $\mc I_{V_0}$. Set $V_1:=V$ and pick an arbitrary $d_1 \in \mc D_M(\pt M, V_1)$. By Lemma~\ref{connect1}, there exists $W \supseteq (V_0 \cup V_1)$ and a path $\alpha: [0, 1] \ra \ol{\mc D} _M(\pt M, W)$ such that $\alpha(0)$ and $\alpha(1)$ are lifts of $d_0$ and $d_1$, and that the relative interior of the path belongs to $\mathcal D_M(\partial M, W)$. We have $\alpha(0)$ in the image of $\mc I_W$. We denote its preimage by $g_0$. We can modify the path so that for all small $t$ the metrics $\alpha(t)$ are in the image of $\mathcal I_W$. Indeed, consider any map $f_W: W \rightarrow F(g_0)$, where $F(g_0)$ is the set of faces of $g_0$, extending the marking map $f_0: V_0 \rightarrow F(g_0)$ of $g_0$. We can slightly perturb the respective faces by splitting them in parts to obtain a metric $g'_0 \in \mathcal P(M, W)$. By Corollary~\ref{locconnect}, there exists a neighborhood $U$ of $\alpha(0)$ such that $U \cap \mc D_M(\pt M, W)$ is connected. Hence, we can choose $g'_0$ so that $\mc I_W(g'_0)$ can be connected with the metric $\alpha(t)$ for some small $t$ by a path in $\mc D_M(\pt M, W)$. Taking the concatenation of this path with the rest of the initial path, we obtain a path satisfying our demand. Then the infinitesimal rigidity and the properness of $\mathcal I_W$ imply that all $\alpha(t)$ are in the image of $\mathcal I_W$. Hence, also $d_1$ is in the image of $\mc I_{V_1}=\mc I_V$, which means that $\mc P(M, V)$ is non-empty and $\mathcal I_V$ is surjective.

It remains to show that each metric in $\mathcal D_M(\partial M, V)$ has only one preimage. Consider a commutative diagram
\begin{center}
\begin{tikzcd}
\mc P^\sharp (M, V) \arrow[r, "\mc I^\sharp_V"] \arrow[d, "\tau^P_V"]
& \mc D^\sharp_M(\pt M, V) \arrow[d, "\tau^D_V"] \\
\mc P(M, V) \arrow[r, "\mc I_V"]
& \mc D_M(\pt M, V)
\end{tikzcd}
\end{center}
Here we denote by $\tau^P_V$ and $\tau^D_V$ the respective projection maps.
We already know that all the arrows except $\mc I^\sharp_V$ are covering maps. It follows that also $\mc I^\sharp_V$ is a covering map. We will show that it is a homeomorphism. The groups of the deck transformations of $\tau^P_V$ and $\tau^D_V$ are both naturally isomorphic to $B_0(\pt M, V)$ and $\mc I^\sharp_V$ is equivariant with respect to it, see Remark~\ref{decktr}. Thereby, this will imply that $\mc I_V$ is also a homeomorphism.

Suppose that for $g^\sharp_0, g^\sharp_1 \in \mathcal P^\sharp(M, V)$ we have $\mathcal I^\sharp_V(g^\sharp_0)=\mathcal I^\sharp_V(g^\sharp_1)$. Due to Lemma~\ref{connect}, there exists $W' \supseteq V$ and a path $\eta: [0,1] \ra \ol{\mc P}^\sharp(M, W')$ such that $\eta(0)$ and $\eta(1)$ are lifts of $g^\sharp_0$ and $g^\sharp_1$ respectively. Consider $S^1$ as $[0,1]$ with identified endpoints. Then $\mc I^\sharp_{W'}\circ \eta$ is a loop, which we denote by $\beta: S^1 \ra \ol{\mc D}^\sharp _M(\pt M, W')$. Due to Lemma~\ref{connect2}, there exist $W \supseteq W'$, a triangulation $\mc T$ of $(\pt M, W)$ and a map $\alpha: \ol D^2 \ra \ol{\mc D}^\sharp _M(\pt M, \mc T)$ such that $\alpha|_{S^1}$ is a lift of $\beta$, and that $\alpha(D^2) \subset \mathcal D^\sharp_M(\partial M, \mc T)$. 

There exists a lift $\zeta: [0,1] \ra \ol{\mc P}^\sharp(M, W)$ of the map $\eta$ such that $\alpha|_{S^1}=\mc I^\sharp_W \circ \zeta$. We claim that there exists a homotopy $\zeta_s$ of $\zeta_0=\zeta$ with $s \in [0, \e]$ for some small $\e>0$ such that for all $s>0$ we have $\zeta_s \subset \mc P^\sharp(M, W)$. Indeed, denote by $\ol g_t \in \mc{CH}(N)$ the projection of $\zeta(t)$. For $s>0$ and every $t \in [0,1]$ we move every immersed plane in $N(\ol g_t)$ marked by $v \in V$ at distance $s$ from its previous position along the segment connecting the closest point on this plane to the closest point on $C(\ol g_t)$ towards $C(\ol g_t)$. One can see that for sufficiently small $s$ the intersection of the immersed negative half-spaces of these planes defines a subset of $N(\ol g_t)$ homeomorphic to $M$, and that every marked plane intersects the boundary of this subset by a set with non-empty relative interior. This determines an element $\zeta_s(t) \in \mc P^\sharp(M, W)$.

Define $\theta_s(t):=\mc I^\sharp_W \circ \zeta_s(t)$. Since $\mc D^\sharp_M(\pt M, \mc T)$ is open, we may assume that $\e$ is sufficiently small so that $\theta_s(t) \in \mc D^\sharp_M(\pt M, \mc T)$ for all $t \in [0,1]$ and $s \in (0, \e]$.  Consider the arcs 
$$\theta'_s:=\{\theta_r(0), r\in [0, s]\} \cup \{\theta_r(1), r \in [0, s]\}.$$
For each $s$ the union $\theta_s \cup \theta'_s$ is a loop. We parametrize these loops as $\iota_s: S^1 \ra \ol{\mc D}^\sharp _M(\pt M, \mc T)$ so that the arc $\theta'_s$ corresponds to the restriction of $\iota_s$ to the interval $[1/2-s, 1/2+s]$ and $\iota_s(1/2)=\theta_0(0)=\theta_0(1)$.

Recall that for a number $\lambda$ and a $\mc T$-triangulable spherical cone-metric $d$ on $(\pt M, W)$ we denote by $(\lambda *_{\mc T} d)$ the metric obtained from $d$ by multiplication of all the side-lengths of $\mc T$ by $e^\lambda$. Consider a 1-parameter family of functions $\lambda_s: S^1 \ra [0, \e]$, where $s \in [0,\e]$, such that $\lambda_0$ is identically zero, and for all $s>0$ we have $\lambda_s(t)>0$ for $t \in (1/2-s, 1/2+s)$, but $\lambda_s$ is zero outside $(1/2-s, 1/2+s)$. Define loops $\delta_s:=(\lambda_s *_{\mc T} \iota_s)$. Provided that $\e$ is sufficiently small, for $s>0$ from Corollary~\ref{multipl} we get that $\delta_s \subset \mc D^\sharp_M(\pt M, \mc T)$. We note that the loop $\delta_0$ is a reparametrization of $\alpha|_{S^1}$.

Because $\delta_0$ is contractible in $\ol{\mc D}^\sharp _M(\pt M, \mc T)$, so are the loops $\delta_s$. We note that this means that for $s>0$ the loops $\delta_s$ are contractible in ${\mc D^\sharp} _M(\pt M, \mc T)$. Indeed, let $\alpha_s: \ol D^2 \ra \ol{\mc D}^\sharp _M(\pt M, \mc T)$ be a homotopy contracting $\delta_s$ in $\ol{\mc D}^\sharp _M(\pt M, \mc T)$. Choose a function $\lambda_s: \ol D^2 \ra [0, \e]$ such that $\lambda_s|_{S^1}=0$ and $\lambda_s|_{D^2}>0$. Consider now a homotopy $\alpha'_s:=(\lambda_s *_{\mc T} \alpha_s)$. By Corollary~\ref{multipl}, provided that $\e$ is sufficiently small, we get
$$\alpha'_s(D^2) \subset {\mc D^\sharp} _M(\pt M, \mc T).$$
Because the restriction of $\mc I^\sharp_W$ to $\mc P^\sharp(M, W)$ is a covering map and the loops $\delta_s$ for $s>0$ are contractible in $\mc D^\sharp_M(\pt M, W)$, they lift to loops in $\mc P^\sharp(M, W)$. This means that for $s>0$ there are paths $\kappa_s: [0, 1] \ra \mc P^\sharp(M, W)$ with $\kappa_s(0)=\zeta_s(0)$, $\kappa_s(1)=\zeta_s(1)$ such that 
$$\mc I^\sharp_W \circ \kappa_s(t) = \delta_s(1/2-s+2ts).$$
In particular, the arcs $\mc I^\sharp_W \circ \kappa_s$ converge to $\delta_0(1/2)$ as $s$ tends to zero.

We now project the situation to the spaces $\ol{\mc P}(M, W)$ and $\ol{\mc D}_M(M, W)$. Define $\xi_s:=\tau^D_W\circ\delta_s$ and $\omega_s:=\tau^P_W\circ\kappa_s$ (here we consider $\ol{\mc P}^\sharp(M, W)$ and $\ol{\mc D}^\sharp_M(M, W)$ as the domains of $\tau^P_W$ and $\tau^D_W$). Consider a compact neighborhood $U$ of $\xi_0(1/2)$ such that all arcs obtained from the restrictions of $\xi_s$ to $[1/2-s, 1/2+s]$ belong to $U$. By Lemma~\ref{proper}, $\mc I_W$ is proper, thus, the set $X:=(\mc I_W)^{-1}(U)$ is compact. (We note that at this point we do not know whether $\mc I^\sharp_W$ is proper, this is the reason why we had to use $\mc I_W$.) Hence, there exists a Hausdorff limit $\omega \subset X$ of the arcs $\omega_s$ (here we perceive $\omega_s$ and $\omega$ not as maps, but as point-sets). By continuity, $\mc I_W(\omega)=\xi_0(1/2)$. Because $\mc I_V$ is a local homeomorphism, the points $\tau^P_W\circ\zeta(0)$, $\tau^P_W\circ\zeta(1)$ are isolated in $(\mc I_W)^{-1}(\xi_0(1/2))$. Hence, $\omega$ contains isolated points. This can not happen as $\omega$ is the Hausdorff limit of the connected arcs $\omega_s$ and $\ol{\mc P}(M, W)$ is a Hausdorff space. This contradiction finishes the proof.

\end{proof}

\bibliographystyle{abbrv}
\bibliography{dual}
\bigskip{\footnotesize\par
  \textsc{University of Vienna, Faculty of Mathematics, Oskar-Morgenstern-Platz 1, A-1090 Vienna, Austria} \par
  \textit{E-mail}: \texttt{roman.prosanov@univie.ac.at}
}

\end{document}